\definecolor{mygreen}{rgb}{0,0.6,0}
\definecolor{codegreen}{rgb}{0,0.6,0}
\definecolor{codegray}{rgb}{0.5,0.5,0.5}
\definecolor{codepurple}{rgb}{0.58,0,0.82}
\definecolor{backcolour}{rgb}{0.95,0.95,0.95}
\definecolor{keywordblue}{rgb}{0.13, 0.13, 1}
\lstdefinestyle{mystyle}{
	backgroundcolor=\color{backcolour},   
	commentstyle=\color{codegreen},
	keywordstyle=\color{keywordblue}\bfseries,
	numberstyle=\tiny\color{codegray},
	stringstyle=\color{codepurple},
	basicstyle=\ttfamily\small,
	breakatwhitespace=false,         
	breaklines=true,                 
	captionpos=b,                    
	keepspaces=true,                 
	numbers=left,                    
	numbersep=5pt,                  
	showspaces=false,                
	showstringspaces=false,
	showtabs=false,                  
	tabsize=2,
	language=Mathematica,
	frame=single,
	rulecolor=\color{black!30},
	mathescape=true, 
	morekeywords={Reduce, Reals} 
}
\pgfplotsset{compat=1.18}
\newtheorem{theorem}{Theorem}[section]
\newtheorem*{theorem*}{Theorem}
\newtheorem{lemma}{Lemma}[section]
\newtheorem*{remark*}{Remark}
\newtheorem{proposition}[theorem]{Proposition}
\newcommand\JP[1]{\langle#1\rangle}
\newcommand\norm[1]{\left\|#1\right\|}
\def\lkr#1{\langle #1\rangle_{\omega}}
\newcommand\RE{\text{Re}}
\newcommand\IM{\text{Im}}
\newcommand{\R}{\mathbb{R}}
\newcommand{\N}{\mathbb{N}}
\newcommand{\C}{\mathbb{C}}
\newcommand{\Z}{\mathbb{Z}}
\def\d{\mathrm{d}}
\def\T{\mathbb{T}}
\def\cK{{\mathcal{K}}}
\def\cS{{\mathcal{S}}}
\def\cC{{\mathcal{C}}}
\def\cJ{{\mathcal{J}}}
\def\cL{{\mathcal{L}}}
\begin{document}
	
	\numberwithin{equation}{section}
	
	\title{Dispersive estimates for discrete Klein-Gordon equations on the one-dimensional lattice with quasi-periodic potentials}
	
	\author[Wan]{Zhiqiang Wan}
	\address{School of Mathematical Sciences, University of Science and Technology of China, Hefei}
	\email{ZhiQiang\_Wan576@mail.ustc.edu.cn}
	\author[Zhang]{Heng Zhang}
	\address{School of Mathematical Sciences, University of Science and Technology of China, Hefei}
	\email{hengz@mail.ustc.edu.cn}

	\keywords{}
	\subjclass[2020]{}
	\thanks{}

	\date{\today}

\begin{abstract}
    We prove $\ell^{1}\to\ell^{\infty}$ dispersive estimates for the discrete Klein--Gordon equations on $\mathbb Z$ with small real-analytic quasi-periodic potentials, showing that the time-decay rate persists as $(\tfrac13)^{-}$. As applications, we derive the corresponding Strichartz estimates. Further, we establish the global well-posedness and a scattering theorem for the associated nonlinear discrete Klein--Gordon equations with small data.
\end{abstract}
	\maketitle	
	
	\section{Introduction}

	We consider the Klein--Gordon equation with quasi-periodic potentials on the one-dimensional lattice $\Z$:
	\begin{equation}\label{eq:DKG}
		\begin{cases}
			\bigl(\partial_{tt}+G_{\theta}+m^{2}\bigr)u(n,t)=0,\qquad\qquad\quad(n,t)\in\mathbb{Z}\times\left(0,\infty\right),\\
            u(n,0)=\varphi(n)\in\ell^{1}\left(\Z\right),\quad \partial_{t}u(n,0)=\psi(n)\in\ell^{1}\left(\Z\right),\quad n\in\mathbb{Z},\\
		\end{cases}
	\end{equation}
	where $G_{\theta}$ is defined as
	\begin{equation*}
		(G_{\theta}u)(n)
		:=-\Delta_{\text{disc}}u(n)+V(\theta+n\omega)u(n),
		\qquad n\in\mathbb{Z},
	\end{equation*}
	and the discrete Laplacian is given by
	$\Delta_{\text{disc}}u(n):=u(n+1)+u(n-1)-2u(n)$.
	Here $m>0$, $\theta \in \T^d:=\left(\R/2\pi\Z\right)^{d}$, the potential $V\colon\mathbb{T}^{d}\to\mathbb{R}$ is real-analytic with $d\geq 1$, and the frequency vector $\omega\in\mathbb{R}^{d}$ is \textbf{Diophantine}; i.e., there exist constants $\gamma>0$ and $\eta>d-1$ such that
    \begin{equation}\label{dio}
		\inf_{j\in\mathbb{Z}}\bigl|\langle k,\omega\rangle_{\R^{d}}-2j\pi\bigr|
		>\frac{\gamma}{|k|^{\eta}},
		\qquad \forall\,k\in\mathbb{Z}^{d}\setminus\{0\},
    \end{equation}
    where $\langle\cdot{,}\cdot\rangle_{\R^{d}}$ denotes the standard scalar product on $\mathbb{R}^{d}$. We assume that $V$ admits a bounded analytic complex extension to $\left\{z\in\C^{d}:\left|{\rm Im}z\right|<r\right\}$. We also denote
    \begin{equation*}
        \varepsilon_{0}:=\left|V\right|_{r}=\sup_{\left\{z\in\C^{d}:\left|{\rm Im}z\right|<r\right\}}\left|V\left(z\right)\right|.
    \end{equation*}
    In the literature, one often sets $H_{\theta}=G_{\theta}-2$ to be the \textbf{quasi-periodic Schr\"odinger operator}.  We are interested in the dispersive estimates for the equation \eqref{eq:DKG}, namely the $\ell^{1}\!\to\!\ell^{\infty}$ estimates of the form
	\begin{equation*}
		\norm{u(\cdot,t)}_{\ell^{\infty}(\Z)}
		\leq C\JP{t}^{-\tau}\bigl(\norm{\varphi}_{\ell^{1}(\Z)}+\norm{\psi}_{\ell^{1}(\Z)}\bigr),
		\qquad \forall\,\varphi,\psi\in\ell^{1}(\Z),
	\end{equation*}
	where $\JP{t}:=\sqrt{1+t^{2}}$ is the Japanese bracket, $C>0$ is a constant independent of $t$ and $\tau$ is the so-called dispersive decay rate.

	\textbf{Motivation} The goal of this work is to understand to what extent dispersive properties of lattice Klein–Gordon dynamics are stable under introducing a quasi-periodic medium. On the one hand, the free discrete Klein–Gordon equation on $\mathbb Z$ is known to exhibit a precise dispersive decay rate (see \cite{SK05}), which plays a central role in both the linear and nonlinear theory. On the other hand, quasi-periodic Schr\"odinger operators $H_\theta$ are canonical models for wave propagation in quasicrystals and are known, in the small analytic and Diophantine regime, to have purely absolutely continuous spectrum of Cantor type (see for example \cite{AD08, Avi08}). From a dispersive PDE viewpoint, this spectral picture is highly singular and lies completely outside the standard setting of the fine continuous spectrum given by a finite union of intervals, so it is a priori unclear whether any robust $\ell^1\to\ell^\infty$ decay should survive. 
    
    Our main result states as follows.
\begin{theorem}\label{thm:disper}
		Let $u$ be a solution of equation \eqref{eq:DKG}. Assume $\varepsilon_0<\varepsilon_{**}$, with $\varepsilon_{**}$ small as in Lemma \ref{lem:disper-log}. Then for any given $0<\tau<\frac13$, there exists $K_1=K_1(\varepsilon_0,\eta,m,\tau)$ such that for any $\theta\in\T^d$ and any $t\in\R$,
\begin{equation}\label{linest}
			\norm{u\left( \cdot ,t\right)}_{\ell^\infty\left(\Z\right)}\leq K_1\JP{t}^{-\tau}(\norm{\varphi}_{\ell^1\left(\Z\right)}+\norm{\psi}_{\ell^1\left(\Z\right)}) \ , \quad \forall \ \varphi, \psi \in\ell^1(\Z) \ .
		\end{equation}
	\end{theorem}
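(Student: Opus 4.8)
The plan is to deduce \eqref{linest} from the large-time dispersive bound with a logarithmic loss supplied by Lemma~\ref{lem:disper-log}, after recording the representation of the solution and disposing of short times. Since $G_\theta$ is bounded and self-adjoint on $\ell^2(\Z)$ with $\sigma(G_\theta)\subset[-\eps_0,4+\eps_0]$, and $\eps_0$ is small enough that $G_\theta+m^2$ is positive, the solution of \eqref{eq:DKG} is
\[
u(\cdot,t)=\cos\!\bigl(t\sqrt{G_\theta+m^{2}}\bigr)\varphi+\frac{\sin\!\bigl(t\sqrt{G_\theta+m^{2}}\bigr)}{\sqrt{G_\theta+m^{2}}}\,\psi .
\]
Because $s\mapsto\cos(ts)$ and $s\mapsto s^{-1}\sin(ts)$ are bounded on the compact set $\sigma\!\bigl(\sqrt{G_\theta+m^{2}}\bigr)\subset(0,\infty)$ uniformly in $t\in\R$, one gets $\norm{u(\cdot,t)}_{\ell^\infty(\Z)}\le\norm{u(\cdot,t)}_{\ell^2(\Z)}\lesssim_{m,\eps_0}\norm{\varphi}_{\ell^1(\Z)}+\norm{\psi}_{\ell^1(\Z)}$ for \emph{every} $t$; in particular \eqref{linest} is trivial for $|t|\le 1$, so the whole content is the decay as $|t|\to\infty$.

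Next I would explain how Lemma~\ref{lem:disper-log} itself is obtained, since that is where the real work lies. The idea is to diagonalise $G_\theta$ by exploiting the almost-reducibility of the associated one-dimensional quasi-periodic Schr\"odinger cocycle, available here because $\omega$ is Diophantine and $V$ is small and real-analytic (see, e.g., \cite{AD08,Avi08} and references therein). Outside a neighbourhood of the resonant set $\{\,\langle k,\omega\rangle_{\R^{d}}\in\pi\Z\,\}$ the cocycle is analytically reducible to a constant rotation, which yields Bloch-type generalised eigenfunctions $\psi_E(n)=e^{\mathrm{i}n\,\xi(E)}\Phi_E(\theta+n\omega)$ with $\Phi_E$ analytic on $\T^d$ and uniformly bounded, and with $\xi(E)$ — essentially the fibered rotation number — an $\eps_0$-perturbation of the free momentum $\xi_0(E)$ determined by $2-2\cos\xi_0(E)=E$. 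Feeding this into the generalised eigenfunction expansion of $e^{\mathrm{i}t\sqrt{G_\theta+m^{2}}}$ and changing variables $E\mapsto\xi$ turns the kernel into an oscillatory integral
\[
\bigl\langle\delta_n,\,e^{\mathrm{i}t\sqrt{G_\theta+m^{2}}}\,\delta_\ell\bigr\rangle=\int e^{\mathrm{i}t\,\Omega(\xi)}\,e^{\mathrm{i}(n-\ell)\xi}\,a_{\xi}(\theta,n,\ell)\,\d\xi ,
\]
whose phase $\Omega(\xi)$ is a small real-analytic perturbation of the free discrete Klein--Gordon dispersion relation $\sqrt{2-2\cos\xi+m^{2}}$ and whose amplitude is analytic and uniformly bounded on the non-resonant part. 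There one runs the stationary-phase / van der Corput analysis familiar from the free equation (\cite{SK05}): non-degenerate critical points of $\Omega$ give $\JP{t}^{-1/2}$, while the inflection (Airy-type) degeneracy of the free symbol, which is stable under small analytic perturbations, produces the borderline rate $\JP{t}^{-1/3}$, with uniformity in $n,\ell,\theta$ inherited from the uniform analytic bounds on $a_\xi$.

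The main obstacle is that $\sigma(G_\theta)$ is a Cantor set: the change of variables $E\mapsto\xi$ collapses each of the infinitely many spectral gaps to a point, and near the gap edges the reducing conjugation degenerates while $\Omega'(\xi)\to 0$ (the density of states acquires a vertical tangent), so the clean analysis above fails exactly there. I would split the $\xi$-integral into the non-resonant bulk, handled as above, and a union of resonant bands labelled by $k\in\Z^{d}$; on the band of order $k$ one uses that the corresponding gap has width $\lesssim e^{-cr|k|}$ by analyticity of $V$, and that almost reducibility still conjugates the cocycle, on that band, to one exponentially (in $|k|$) close to a rotation, so that the band's local contribution is controlled by its width times a negative power of $\JP{t}$ coming from the near-critical gap-edge behaviour. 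Summing over $k$, one truncates at $|k|\lesssim\log\JP{t}$ (bands of higher order are not yet resolved at time $t$ and contribute an exponentially small tail), the Diophantine condition \eqref{dio} bounding the number of resonances below that scale; the price of the truncation together with the gap-edge contributions is a fixed power of $\log\JP{t}$. Isolating a genuine $\JP{t}^{-1/3}$ from each resonant band and summing the infinitely many of them against the Cantor gap structure is, I expect, the crux of the argument and the origin of the logarithm in Lemma~\ref{lem:disper-log}.

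Finally, granting Lemma~\ref{lem:disper-log} in the form
\[
\norm{u(\cdot,t)}_{\ell^{\infty}(\Z)}\le C(\eps_0)\,\JP{t}^{-1/3}\bigl(\log(2+|t|)\bigr)^{\sigma}\bigl(\norm{\varphi}_{\ell^{1}(\Z)}+\norm{\psi}_{\ell^{1}(\Z)}\bigr),\qquad \sigma=\sigma(\eps_0,\eta)\ge 0,
\]
valid for all $\theta\in\T^{d}$ and $t\in\R$ once $\eps_0<\eps_*$, Theorem~\ref{thm:disper} follows at once: given $0<\tau<\tfrac13$, use the trivial bound of the first step when $|t|\le 1$, and for $|t|\ge 1$ absorb the logarithm via the elementary estimate $(\log(2+|t|))^{\sigma}\le C_{\tau}\,\JP{t}^{\,\frac13-\tau}$, obtaining \eqref{linest} with $K_1(\eps_0,\tau)$ a constant multiple of $C(\eps_0)C_{\tau}$ (plus the short-time constant). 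The same absorption applies if the loss in Lemma~\ref{lem:disper-log} is replaced by any factor growing more slowly than every positive power of $\JP{t}$.
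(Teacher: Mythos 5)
Your argument — trivial $\ell^1\hookrightarrow\ell^2\hookrightarrow\ell^\infty$ bound for $|t|\le1$, then Lemma~\ref{lem:disper-log} plus absorbing the sub-polynomial factor $|\ln\varepsilon_0|^{82000d(\ln\ln(2+\JP{t}))^2}$ into $\JP{t}^{\frac13-\tau}$ — is exactly the paper's deduction, with the absorption step the paper leaves implicit made explicit. The one piece of bookkeeping you fold into your restatement of the lemma is the passage from the bound on the oscillatory integral $\mathbb{I}(\Sigma)$ to the $\ell^\infty$ solution estimate: in the paper this runs through the modified spectral transform of Proposition~\ref{prop:MST} via \eqref{infinito}, \eqref{eq:appro_u}, \eqref{eq:K,J expansions}, summing over $k$ to produce the $\ell^1$ norms of the data and absorbing the $\varepsilon_0^{\frac{\sigma^2}{10}}\norm{u(\cdot,t)}_{\ell^\infty}$ error on the left by taking $\varepsilon_0$ small.
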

	For our Theorem \ref{thm:disper}, we have the following remarks.
	
	\begin{itemize}
		\item[($\mathbf{R}_1$)] Stefanov and Kevrekidis~\cite{SK05} first established a sharp dispersive decay rate  
		$
		\frac{1}{3}
		$ 
		for the free discrete Klein-Gordon equation (i.e., equation \eqref{eq:DKG} with $V=0$) on $\mathbb Z$ via Van der Corput's lemma.  
		Recently, Borovyk and Goldberg \cite{BG17} derived the corresponding rate on $\mathbb Z^{2}$, namely  $\langle t\rangle^{-\frac{3}{4}}$. Later, Cuenin and Ikromov~\cite{CI21} derived the rates on~$\mathbb Z^{d}$ for $d=3,4$, namely $
		\langle t\rangle^{-\frac{7}{6}}\ (d=3)$, and
		$\langle t\rangle^{-\frac{3}{2}}\log \bigl(2+|t|\bigr)\ (d=4)$. The obstacle to extending Cuenin-Ikromov to higher dimensions is that the finite classification of stable phase singularities (Thom's catastrophes), which is essential for deriving sharp decay rates, breaks down in dimensions $d \ge 5$. Our result can be seen as a perturbation result for Stefanov and Kevrekidis, which shows that the decay rate $(\frac{1}{3})^{-}$ persists under small quasi-periodic perturbations of the potentials.

		\item[($\mathbf{R}_2$)] While several studies (e.g., \cite{CT09, GPP13, PS08, KPS09, KKK06}) have addressed dispersive equations with potentials whose continuous spectrum is a union of disjoint intervals, generic behavior of the operator $H_{\theta}$ is markedly different: its spectrum is purely absolutely continuous and forms a Cantor set; see, e.g., \cite{AD08, Avi08, E92}. Our approach builds heavily on recent advances in the spectral theory of quasi-periodic Schr\"odinger operators by Zhao \cite{Z16}. Specifically, we employ a KAM-based perturbative construction to approximate the rotation number and decompose the spectrum into many small intervals. Using a modified  spectral transform, we represent the solution as an oscillatory integral and then apply Van der Corput's lemma on each interval to establish dispersive decay. We highlight that a closely related strategy is employed in Zhao’s work on ballistic transport \cite{Z16} and in the dispersive estimates of Bambusi–Zhao for discrete Schrödinger equations with quasi-periodic potentials \cite{BZ20}.

        \item[($\mathbf{R}_3$)] We compare our result with the recent work of Cheng~\cite{C26}. 
The first version of the present paper was posted on arXiv before Cheng's paper became publicly available as an arXiv preprint; to the best of our knowledge, no arXiv version of~\cite{C26} was available at the time when the first arXiv version of the present paper was posted. We have added the reference~\cite{C26} in the revised manuscript for completeness. Let us now explain the mathematical relation between the two works. In the notation of the present paper, \cite{C26} corresponds to the special mass case $m^2=1$. The passage from the case \(m^2=1\) to arbitrary \(m^2>0\) is not a formal rescaling in the lattice setting. 
Indeed, if one rescales time by \(s=mt\), then
\[
        \bigl(\partial_t^2-\Delta_{\rm disc}+m^2+V(\theta+n\omega)\bigr)u_n=0
\]
is transformed into
\[
        \partial_s^2u_n
        +m^{-2}\bigl(-\Delta_{\rm disc}+V(\theta+n\omega)\bigr)u_n
        +u_n=0 .
\]
The coefficient \(m^{-2}\) in front of the discrete Laplacian cannot be removed by a spatial dilation, since the underlying space is the fixed lattice \(\mathbb Z\), and a dilation of the lattice variable does not preserve either \(\mathbb Z\), the nearest-neighbor Laplacian, or the quasi-periodic sampling \(\theta+n\omega\). This is a genuine difference from the Euclidean continuum case. Consequently, the \(m=1\) estimate in~\cite{C26} does not imply the dispersive estimate for general $m^2>0$. 
        
\item[($\mathbf{R}_4$)] For the free discrete wave equation with quasi-periodic potentials (i.e., equation \eqref{eq:DKG} with $m=0$), there do not exist $\ell^1 \to \ell^{\infty}$ estimates for the fundamental solution of the free wave equation. This phenomenon also holds on $\R^1$. We point out the following development regarding the dispersive estimate for the free discrete wave equation.  Schultz \cite{Sch98} proved dispersive estimates for the wave equation on lattice graphs $\mathbb{Z}^d$ for $d=2,3$, namely  $\langle t\rangle^{-\frac{3}{4}}\ (d=2)$ and $\langle t\rangle^{-\frac{7}{6}}\ (d=3)$. Combining the singularity theory
		with results in uniform estimates of oscillatory integrals, Bi-Chen-Hua \cite{BCH1} extended this result  to $d=4$: they proved that the optimal
		time decay rate is $|t|^{-\frac{3}{2}} \log |t|$. By tools from Newton polyhedra, Bi-Chen-Hua \cite{BCH2} further extended the result to $d=5$: the sharp decay rate of the fundamental solution of the wave equation on $\mathbb{Z}^5$ is $|t|^{-\frac{6}{11}}$.
    \end{itemize}
	
	A typical application for dispersive estimates is the derivation of Strichartz inequalities. For the inhomogeneous discrete Klein-Gordon equation, 
	\begin{equation}\label{eq: inhomogegeneous}
		\begin{cases}
			\bigl(\partial_{tt}+G_{\theta}+m^{2}\bigr)u(n,t)=F(n,t),
			&(n,t)\in\mathbb{Z}\times\left(0,\infty\right),\\[2mm]
			u(n,0)=\varphi(n),\quad \partial_{t}u(n,0)=\psi(n),
			&n\in\mathbb{Z},
		\end{cases}
	\end{equation}
	we have the following mixed space-time estimates via the standard Keel-Tao argument. For convenience, we say   
	a pair of exponents $(q,r)$ with
	$
	2\le q,r\le\infty
	$
	is called \emph{$\tau$--admissible} if
	$
	\frac{2}{q}
	=\tau\Bigl(1-\frac{2}{r}\Bigr).
	$ Here $0<\tau<\tfrac13$ is fixed as in Theorem~\ref{thm:disper}.
	
	\begin{theorem}\label{thm:Strichartz}
		 Assume the hypotheses of Theorem~\ref{thm:disper}. Fix
		$0<\tau<\tfrac13$ and $F\in L^{\tilde q'}(\mathbb R;\ell^{\tilde r'}(\mathbb Z))$. Let $(q,r)$ and $(\tilde q,\tilde r)$ be two $\tau$--admissible pairs with $q,\tilde q<\infty.$  Then the solution 
		$
		u
		$ of \eqref{eq: inhomogegeneous}
satisfies\begin{equation}\label{eq:inhom-Strichartz}
			\|u\|_{L^q_t(\mathbb R;\ell^r(\mathbb Z))}
			\;\le\; C\Bigl(
			\|\varphi\|_{\ell^2(\mathbb Z)}
			+\|\psi\|_{\ell^2(\mathbb Z)}
			+\|F\|_{L^{\tilde q'}_t(\mathbb R;\ell^{\tilde r'}(\mathbb Z))}
			\Bigr),
		\end{equation}
		where $\tilde q'$ and $\tilde r'$ denote the Hölder conjugates of
		$\tilde q$ and $\tilde r$, respectively, and the constant $C$ is
		independent of $\theta$, $\varphi$, $\psi$ and $F$.

	\end{theorem}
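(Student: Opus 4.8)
The plan is to deduce the Strichartz estimates from the dispersive estimate of Theorem \ref{thm:disper} by the abstract Keel--Tao machinery, after recasting the second-order equation \eqref{eq: inhomogegeneous} as a first-order system and introducing the appropriate functional calculus. First I would write $\Lambda_\theta := \sqrt{G_\theta + m^2}$, which is a well-defined positive self-adjoint operator on $\ell^2(\Z)$ since $G_\theta + m^2 \geq m^2 > 0$ (the potential term is a bounded self-adjoint perturbation and $-\Delta_{\text{disc}} \geq 0$). The solution of the homogeneous problem is then $u(\cdot,t) = \cos(t\Lambda_\theta)\varphi + \Lambda_\theta^{-1}\sin(t\Lambda_\theta)\psi$, and by Duhamel the inhomogeneous solution adds $\int_0^t \Lambda_\theta^{-1}\sin((t-s)\Lambda_\theta)F(\cdot,s)\,\d s$. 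Setting $U(t) := e^{it\Lambda_\theta}$, everything is a linear combination of terms $U(t)g$ with $g \in \ell^2$; energy conservation for the free Klein--Gordon flow gives the $L^\infty_t \ell^2_x$ bound $\|U(t)g\|_{\ell^2} = \|g\|_{\ell^2}$, which is the needed ``untruncated'' endpoint.

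The second step is to record the two hypotheses of the Keel--Tao theorem for the one-parameter family $U(t)$. The \emph{energy estimate} $\|U(t)\|_{\ell^2 \to \ell^2} \leq 1$ is immediate from self-adjointness. The \emph{decay estimate} $\|U(t)U(s)^* \|_{\ell^1 \to \ell^\infty} \leq C\JP{t-s}^{-\tau}$ is exactly where Theorem \ref{thm:disper} enters: since $U(t)U(s)^* = e^{i(t-s)\Lambda_\theta}$, and since $\cos(t\Lambda_\theta) = \tfrac12(e^{it\Lambda_\theta} + e^{-it\Lambda_\theta})$ and $\Lambda_\theta^{-1}\sin(t\Lambda_\theta) = \tfrac{1}{2i}\Lambda_\theta^{-1}(e^{it\Lambda_\theta} - e^{-it\Lambda_\theta})$ are the propagators controlled by \eqref{linest}, one extracts the $\ell^1 \to \ell^\infty$ bound on $e^{\pm it\Lambda_\theta}$ itself (the $\Lambda_\theta^{-1}$ factor only helps, being bounded on $\ell^2$ and hence harmless in the relevant interpolation; more carefully, Theorem \ref{thm:disper} already bounds both $\cos(t\Lambda_\theta)$ and $\Lambda_\theta^{-1}\sin(t\Lambda_\theta)$ from $\ell^1$ to $\ell^\infty$, and the two together recover $e^{it\Lambda_\theta}$ on $\ell^1 \cap \ell^2$). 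With these two inputs, the Keel--Tao theorem with decay exponent $\sigma = \tau$ yields, for all $\tau$-admissible $(q,r),(\tilde q,\tilde r)$ with $q,\tilde q < \infty$,
\begin{equation*}
\|U(t)g\|_{L^q_t \ell^r_x} \lesssim \|g\|_{\ell^2}, \qquad \Bigl\| \int_{s<t} U(t)U(s)^* F(s)\,\d s \Bigr\|_{L^q_t \ell^r_x} \lesssim \|F\|_{L^{\tilde q'}_t \ell^{\tilde r'}_x}.
\end{equation*}

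The third step is to assemble these into \eqref{eq:inhom-Strichartz}. Applying the homogeneous estimate to $g = \varphi$ and to $g = \Lambda_\theta^{-1}\psi$ (noting $\|\Lambda_\theta^{-1}\psi\|_{\ell^2} \leq m^{-1}\|\psi\|_{\ell^2}$ since $\Lambda_\theta \geq m$) controls the data contribution by $\|\varphi\|_{\ell^2} + \|\psi\|_{\ell^2}$; applying the inhomogeneous (retarded) estimate, together with the Christ--Kiselev lemma to pass from the full-line operator to the truncated integral $\int_0^t$ when $q > \tilde q'$ (and directly otherwise), controls the Duhamel term by $\|F\|_{L^{\tilde q'}_t \ell^{\tilde r'}_x}$. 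Uniqueness and $u \in C(\R;\ell^2)$ follow from the standard energy method: $u(t) = \cos(t\Lambda_\theta)\varphi + \Lambda_\theta^{-1}\sin(t\Lambda_\theta)\psi + \int_0^t \Lambda_\theta^{-1}\sin((t-s)\Lambda_\theta)F(s)\,\d s$ is the unique $\ell^2$-valued mild solution, and strong continuity in $t$ is inherited from the strong continuity of the unitary group $e^{it\Lambda_\theta}$ together with $F \in L^{\tilde q'}_t \ell^{\tilde r'}_x \subset L^1_{\text{loc}}(\R;\ell^2)$ locally after Hölder (using that on $\Z$ one has $\ell^{\tilde r'} \hookrightarrow \ell^2$ for $\tilde r' \leq 2$, i.e. $\tilde r \geq 2$, which is part of admissibility). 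The constant's independence of $\theta$ is inherited directly from the $\theta$-uniformity of $K_1$ in Theorem \ref{thm:disper}.

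I expect no serious obstacle here: the entire argument is the textbook reduction of Strichartz estimates to a dispersive estimate plus energy conservation via Keel--Tao. The only points requiring a little care are (i) confirming that Theorem \ref{thm:disper}, as stated for the physical solution $u$, does supply the $\ell^1 \to \ell^\infty$ bound for the half-wave propagator $e^{it\Lambda_\theta}$ and not merely for the particular combination $\cos(t\Lambda_\theta)\varphi + \Lambda_\theta^{-1}\sin(t\Lambda_\theta)\psi$ — this follows by choosing $(\varphi,\psi) = (g,0)$ and $(\varphi,\psi) = (0, \Lambda_\theta g)$ separately, though the latter requires $\Lambda_\theta g \in \ell^1$, so one instead argues on the spectral side or simply keeps the two propagators $\cos(t\Lambda_\theta)$ and $\Lambda_\theta^{-1}\sin(t\Lambda_\theta)$ as the objects fed into Keel--Tao, treating the first-order reduction with the bounded operator $\Lambda_\theta^{-1}$ throughout; and (ii) the Christ--Kiselev lemma, which applies precisely because all admissible $q,\tilde q$ are finite.
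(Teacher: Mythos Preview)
Your overall strategy is the same as the paper's: set $\mathbf A=\Lambda_\theta=\sqrt{G_\theta+m^2}$, verify the two Keel--Tao hypotheses for $U(t)=e^{it\mathbf A}$, and read off the homogeneous and retarded estimates. The paper in fact asserts that the $\ell^1\to\ell^\infty$ bound for $e^{it\mathbf A}$ is exactly what Section~\ref{sec:3} proves (the oscillatory integral there carries the phase $e^{-it\Omega(E)}$, so the half-wave kernel is what is actually estimated), so your worry (i) is handled by the spectral-side option you mention. The Christ--Kiselev detour is unnecessary: the Keel--Tao theorem you invoke already contains the retarded estimate.

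There is, however, a real gap in your treatment of the Duhamel term. After writing
\[
\int_0^t \mathbf A^{-1}\sin\bigl((t-s)\mathbf A\bigr)F(s)\,\d s
=\frac{1}{2i}\int_0^t\bigl(e^{i(t-s)\mathbf A}-e^{-i(t-s)\mathbf A}\bigr)\,\mathbf A^{-1}F(s)\,\d s,
\]
the Keel--Tao retarded estimate bounds the $L^q_t\ell^r_n$ norm by $\|\mathbf A^{-1}F\|_{L^{\tilde q'}_t\ell^{\tilde r'}_n}$, and to conclude you need $\mathbf A^{-1}:\ell^{\tilde r'}\to\ell^{\tilde r'}$ bounded. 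Since $(\tilde q,\tilde r)$ is $\tau$--admissible with $\tilde q<\infty$, one has $\tilde r>2$, hence $\tilde r'<2$, and boundedness of $\mathbf A^{-1}$ on $\ell^{\tilde r'}$ is \emph{not} a consequence of the $\ell^2$ functional calculus you invoke. The paper supplies this missing ingredient with a separate lemma: a Combes--Thomas argument gives exponential off-diagonal decay for the resolvent $(G_\theta+m^2+\lambda)^{-1}$ uniformly in $\lambda\ge0$, the Balakrishnan formula $\mathbf A^{-1}=\pi^{-1}\int_0^\infty\lambda^{-1/2}(G_\theta+m^2+\lambda)^{-1}\,\d\lambda$ transfers this to the kernel of $\mathbf A^{-1}$, and then the Schur test yields $\|\mathbf A^{-1}\|_{\ell^p\to\ell^p}\le C$ for all $1\le p\le\infty$, uniformly in $\theta$. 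Without this step the inhomogeneous estimate \eqref{eq:inhom-Strichartz} does not follow, so you should add it.
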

	
	Using Strichartz estimates, we establish the global well-posedness of the nonlinear Klein-Gordon equation  \begin{equation}\label{eq: nonlinear}
		\begin{cases}
			\bigl(\partial_{tt}+G_{\theta}+m^{2}\bigr)u(n,t)= \pm |u|^{p-1}u,
			&(n,t)\in\mathbb{Z}\times\left(0,\infty\right),\\[2mm]
			u(n,0)=\varphi(n),\quad \partial_{t}u(n,0)=\psi(n),
			&n\in\mathbb{Z},
		\end{cases}
	\end{equation}
with small initial data.	
	\begin{theorem}\label{thm:nonlinear}
		Let $p>7$. There exist $\varepsilon > 0$ and a constant $C$, so that whenever
		$\| \varphi \|_{\ell^2\left(\Z\right)}+\| \psi\|_{\ell^2\left(\Z\right)} \leq \varepsilon
		$, there exists a unique global solution to \eqref{eq: nonlinear}, satisfying
		\begin{equation*}
		\|u\|_{L^q_t(\mathbb R;\ell^r(\mathbb Z))}\leq C\varepsilon,
		\end{equation*}
		for all $\tau$-admissible pairs $(q, r)$. In particular, $\| u(n,t) \|_{\ell^r(\Z)} \to 0$ as $t \to \infty$ for every $r > 2$.  
	\end{theorem}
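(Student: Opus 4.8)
The plan is a standard contraction-mapping argument on a small ball of a Strichartz space, with Theorem~\ref{thm:Strichartz} used as a black box; the whole scheme hinges on a choice of Lebesgue exponents for which the power nonlinearity closes, and that is exactly where the hypothesis $p>7$ enters. First I would put \eqref{eq: nonlinear} in Duhamel form: writing $\omega_\theta:=\sqrt{G_\theta+m^2}$, which for $\varepsilon_0$ small is a bounded, positive, boundedly invertible operator on $\ell^2(\Z)$, a solution of \eqref{eq: nonlinear} is precisely a fixed point of
\[
\Phi(u)(t):=\cos(t\omega_\theta)\varphi+\frac{\sin(t\omega_\theta)}{\omega_\theta}\psi\ \pm\ \int_0^t\frac{\sin\bigl((t-s)\omega_\theta\bigr)}{\omega_\theta}\bigl(|u|^{p-1}u\bigr)(s)\,\d s.
\]
Because $p>7$ we have $\tfrac{2}{p-1}<\tfrac13$, so I may fix $\tau\in\bigl(\tfrac{2}{p-1},\tfrac13\bigr)$ once and for all, and then let $(q,r)$ be the $\tau$-admissible pair with $q=p+1$; solving $\tfrac2q=\tau\bigl(1-\tfrac2r\bigr)$ gives $r=\tfrac{2\tau(p+1)}{\tau(p+1)-2}$, and the constraint $\tau>\tfrac{2}{p-1}$ ensures $2<r<p+1$. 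Since $q=p+1<\infty$, Theorem~\ref{thm:Strichartz} applies with this one pair used simultaneously as the output pair and, through its Hölder dual, as the input pair.

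The heart of the matter is the nonlinear estimate, where I would lean on the elementary lattice embedding $\ell^a(\Z)\hookrightarrow\ell^b(\Z)$ for $a\le b$ to avoid any loss of derivatives. Since $r\le p+1$ one has $pr'\ge r$, so pointwise in $t$, $\bigl\| |u|^{p-1}u\bigr\|_{\ell^{r'}}=\|u\|_{\ell^{pr'}}^p\le\|u\|_{\ell^r}^p$; and since $q=p+1$ is equivalent to $pq'=q$, integrating in time gives $\bigl\| |u|^{p-1}u\bigr\|_{L^{q'}_t\ell^{r'}}\le\|u\|_{L^q_t\ell^r}^p$. The same two reductions, applied to the pointwise inequality $\bigl| |u|^{p-1}u-|v|^{p-1}v\bigr|\lesssim\bigl(|u|^{p-1}+|v|^{p-1}\bigr)|u-v|$ with Hölder in $n$ (exponents $\tfrac{(p-1)r}{r-2}$ and $r$, the first again $\ge r$ because $r\le p+1$) and then in $t$, yield $\bigl\| |u|^{p-1}u-|v|^{p-1}v\bigr\|_{L^{q'}_t\ell^{r'}}\lesssim\bigl(\|u\|_{L^q_t\ell^r}^{p-1}+\|v\|_{L^q_t\ell^r}^{p-1}\bigr)\|u-v\|_{L^q_t\ell^r}$. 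With these in hand I would run the contraction on the complete metric space $X_R:=\{u\in L^q_t(\R;\ell^r(\Z)):\|u\|_{L^q_t\ell^r}\le R\}$ with $R:=C_0\varepsilon$, $C_0$ being the constant of Theorem~\ref{thm:Strichartz}: that theorem together with the estimates above gives $\|\Phi(u)\|_{L^q_t\ell^r}\le\tfrac R2+C_0C_1R^p$ and $\|\Phi(u)-\Phi(v)\|_{L^q_t\ell^r}\le 2C_0C_1R^{p-1}\|u-v\|_{L^q_t\ell^r}$, both of which close as soon as $\varepsilon$ is small, so $\Phi$ is a $\tfrac12$-contraction of $X_R$ into itself. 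The Banach fixed-point theorem produces a unique $u\in X_R$ solving \eqref{eq: nonlinear} with $\|u\|_{L^q_t\ell^r}\lesssim\varepsilon$; since $u=\Phi(u)$ and $|u|^{p-1}u\in L^{q'}_t\ell^{r'}$, Theorem~\ref{thm:Strichartz} also yields $u\in C(\R;\ell^2)$, and, reapplied with any $\tau$-admissible output pair $(q_1,r_1)$ with $q_1<\infty$, the bound $\|u\|_{L^{q_1}_t\ell^{r_1}}\le C_0\bigl(\|\varphi\|_{\ell^2}+\|\psi\|_{\ell^2}+\| |u|^{p-1}u\|_{L^{q'}_t\ell^{r'}}\bigr)\lesssim\varepsilon$; the remaining pair $(\infty,2)$ is the energy estimate, and uniqueness in $C(\R;\ell^2)$ follows by the usual local-in-time argument.

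For the decay claim, fix $r_1>2$; the admissibility relation then forces $q_1:=\tfrac{2}{\tau(1-2/r_1)}<\infty$, so $t\mapsto\|u(t)\|_{\ell^{r_1}}$ lies in $L^{q_1}(\R)$. Differentiating the Duhamel formula term by term and using $\bigl\| |u|^{p-1}u\bigr\|_{L^1_t\ell^2}\le\|u\|_{L^p_t\ell^{r_2}}^p<\infty$ for the admissible pair $(p,r_2)$ (here $r_2\le 2p$, which holds precisely because $\tau\ge\tfrac{2}{p-1}$), one obtains $\partial_t u\in L^\infty(\R;\ell^2)$; hence $u$ is Lipschitz from $\R$ into $\ell^2$, a fortiori into $\ell^{r_1}$, so $t\mapsto\|u(t)\|_{\ell^{r_1}}$ is uniformly continuous. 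A uniformly continuous function on $\R$ belonging to $L^{q_1}(\R)$ with $q_1<\infty$ must tend to $0$ at infinity, which gives $\|u(\cdot,t)\|_{\ell^{r_1}(\Z)}\to 0$ as $t\to\infty$.

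The step I expect to be the genuine obstacle is the exponent bookkeeping in the second paragraph: one needs a $\tau$-admissible pair, with $\tau<\tfrac13$ imposed by Theorem~\ref{thm:disper}, for which the identities $pq'=q$ and $pr'\ge r$ hold simultaneously, and a short computation shows this pins down $q=p+1$ and forces $r\le p+1$ — equivalently $\tau\ge\tfrac{2}{p-1}$ — which is compatible with $\tau<\tfrac13$ exactly when $p>7$; so the threshold in the hypothesis is what this scheme dictates. The only other point that needs care is the decay statement, since $L^{q_1}_t$-integrability alone does not force pointwise-in-time decay; the Lipschitz bound on $u$ in $\ell^2$, read off from the equation, is precisely what supplies the missing uniform continuity.
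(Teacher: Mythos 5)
Your proposal is essentially correct and follows the same broad scheme as the paper (Duhamel formulation, contraction mapping via Theorem~\ref{thm:Strichartz}, uniform-continuity argument for decay), but it diverges from the paper in two places worth noting.

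First, the choice of exponents. You pick $\tau\in\bigl(\tfrac{2}{p-1},\tfrac13\bigr)$ strictly, so that $q=p+1$ forces $r<p+1$ and the nonlinear estimate uses the lattice embedding $\ell^r\hookrightarrow\ell^{pr'}$, $\ell^r\hookrightarrow\ell^{\frac{(p-1)r}{r-2}}$. The paper instead sets $\tau=\tfrac{2}{p-1}$ exactly (valid since $p>7$) and contracts in the ``self-dual'' space $L^{p+1}_t\ell^{p+1}$, where $\|N(u)\|_{L^{(p+1)/p}_t\ell^{(p+1)/p}}=\|u\|_{L^{p+1}_t\ell^{p+1}}^p$ is an identity and no embedding is needed. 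Both choices close the contraction; the paper's is marginally cleaner bookkeeping, and your derivation correctly identifies that $pq'=q$ pins $q=p+1$ and $r\le p+1$ requires $\tau\ge\tfrac{2}{p-1}$, so $p>7$ is exactly the threshold.

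Second, and more substantively, the route to the $L^\infty_t\ell^2$ bound on $(u,\partial_tu)$, which drives the decay argument. The paper devotes two steps (energy conservation, then a bootstrap in the focusing case to show that small data keep the energy functional coercive) to obtain $\sup_t\bigl(\|u(t)\|_{\ell^2}+\|\partial_tu(t)\|_{\ell^2}\bigr)\lesssim\varepsilon$. You bypass energy conservation entirely by differentiating the Duhamel formula and observing that $N(u)\in L^1_t\ell^2$ follows from the Strichartz bound on a finite-$q$ admissible pair together with $\ell^{r_2}\hookrightarrow\ell^{2p}$. That observation is correct (the inequality $r_2\le 2p$ is indeed equivalent to $\tau\ge\tfrac{2}{p-1}$) and gives a genuinely simpler argument: no sign distinction between focusing and defocusing is required. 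The only place where you should be more explicit is the phrase ``the remaining pair $(\infty,2)$ is the energy estimate'': Theorem~\ref{thm:Strichartz} explicitly excludes $q=\infty$, so that endpoint is not available from the theorem. You should either prove it the same way you proved $\partial_tu\in L^\infty_t\ell^2$ — directly from Duhamel using $N(u)\in L^1_t\ell^2$ and $\|\mathbf A^{-1}\|_{\ell^2\to\ell^2}<\infty$, which indeed works with no energy functional at all — or invoke energy conservation, in which case the focusing case requires the bootstrap the paper carries out. As written, the sentence leaves this choice ambiguous.

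Minor points: the arithmetic $\|\Phi(u)\|_S\le \tfrac R2+C_0C_1R^p$ only comes out with $R=4C_0\varepsilon$ rather than $R=C_0\varepsilon$ when $\|\varphi\|,\|\psi\|\le\varepsilon$; and uniqueness in $C(\R;\ell^2)$ deserves the one-line observation (made explicitly in the paper) that $N$ is locally Lipschitz on $\ell^2$ because $\ell^2\hookrightarrow\ell^{2p}$, so the first-order system is an ODE with locally Lipschitz right-hand side. Neither affects the soundness of the argument.
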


Further, we establish the small-data scattering theorem for equation \eqref{eq: nonlinear}. 

\begin{theorem}\label{thm:scatter}
Let $p>7$. There exists $\delta>0$ such that for any
$(\varphi_+,\psi_+)\in \ell^2(\Z)\times \ell^2(\Z)$ with
$\|\varphi_+\|_{\ell^2}+\|\psi_+\|_{\ell^2}\le \delta$,
there  exists a unique datum $(\varphi,\psi)\in \ell^2(\Z)\times \ell^2(\Z)$
so that the global solution $u$ to \eqref{eq: nonlinear} satisfies the following:
there exists a (unique) solution $v_+$ to the linear equation
\[
(\partial_{tt}+G_\theta+m^2)v_+=0,\qquad v_+(0)=\varphi_+,\quad \partial_t v_+(0)=\psi_+,
\]
such that
\[
\lim_{t\to +\infty}
\left\|
\binom{u(t)}{\partial_t u(t)}-\binom{v_+(t)}{\partial_t v_+(t)}
\right\|_{\ell^2(\Z)\times \ell^2(\Z)}=0.
\]
An analogous statement holds as $t\to-\infty$.
\end{theorem}

The remainder of the paper is organized as follows. Section \ref{sec:2} reviews the structure of the spectrum of the quasi-periodic Schr\"odinger operator and presents a spectral transform that will be used throughout. Section \ref{sec:3} establishes Theorem \ref{thm:disper} by analysing a specific oscillatory integral on the spectrum. In Section \ref{sec:4}, we prove Theorems \ref{thm:Strichartz}, \ref{thm:nonlinear} and \ref{thm:scatter}.
	
\section{Preliminaries}\label{sec:2}

	\subsection{Schr\"odinger cocycle and fibered rotation number}

 The spectral problem $H_{\theta}u=Eu$ is equivalent to the following first-order system, known as the Schr\"odinger cocycle $(\omega, A_0+F_0)$:
	
	\begin{equation}\label{eq:cocycle}
		\begin{pmatrix}
			u(n+1) \\
			u(n)\\
		\end{pmatrix}
		=(A_0(E)+F_0(\theta+n\omega))
		\begin{pmatrix}
			u(n) \\
			u(n-1) \\
		\end{pmatrix}
		\ ,
	\end{equation}
	with $A_0(E):=
	\begin{pmatrix}
		-E&-1\\
		1&0\\
	\end{pmatrix}$ and $F_0(\theta):=
	\begin{pmatrix}
		V(\theta) & 0 \\
		0 & 0
	\end{pmatrix}$. 
	
	We begin with recalling the definition of the fibered rotation number,
	originally introduced by Herman~\cite{Her83}. The following exposition follows
	\cite{HA09}.
	
	Let
	\[
	A + F : \theta \in \T^d \longmapsto
	\begin{pmatrix}
		a(\theta) & b(\theta) \\
		c(\theta) & d(\theta)
	\end{pmatrix}
	\in SL(2,\R),
	\]
	and consider the induced map
	\[
	T_{(\omega, A+F)} : (\theta, \varphi) \in \T^d \times \tfrac{1}{2}\T
	\longmapsto
	(\theta + \omega,\, \phi_{(\omega, A+F)}(\theta, \varphi))
	\in \T^d \times \tfrac{1}{2}\T,
	\]
	where
	\[
	\phi_{(\omega, A+F)}(\theta, \varphi)
	= \arctan
	\!\left(
	\frac{c(\theta)+d(\theta)\tan\varphi}{a(\theta)+b(\theta)\tan\varphi}
	\right).
	\]
	Assume that \(a,b,c,d\) are continuous on \(\T^d\) and that the cocycle \(A+F(\theta)\)
	is homotopic to the identity. Then the same holds for \(T_{(\omega, A+F)}\),
	which consequently admits a continuous lift
	\[
	\widetilde{T}_{(\omega, A+F)} : (\theta, \varphi) \in \T^d \times \R
	\longmapsto
	(\theta + \omega,\, \tilde{\phi}_{(\omega, A+F)}(\theta, \varphi))
	\in \T^d \times \R
	\]
	such that
	\[
	\tilde{\phi}_{(\omega, A+F)}(\theta, \varphi) \bmod \pi
	= \phi_{(\omega, A+F)}(\theta, \varphi \bmod \pi).
	\]
	The function
	\[
	(\theta, \varphi) \longmapsto
	\tilde{\phi}_{(\omega, A+F)}(\theta, \varphi) - \varphi
	\]
	is \((2\pi)^d\)-periodic in \(\theta\) and \(\pi\)-periodic in \(\varphi\). Set
	\[
	\rho(\phi_{(\omega, A+F)})
	:=
	\lim_{n \to +\infty}
	\frac{1}{n}
	\bigl(p_2 \circ \widetilde{T}^{\,n}_{(\omega, A+F)}(\theta, \varphi) - \varphi\bigr)
	\in \R,
	\]
	where \(p_2(\theta, \varphi) = \varphi\).
	This limit exists for all \((\theta, \varphi) \in \T^d \times \R\),
	and the convergence is uniform in both variables
	(see~\cite{Her83} for details). The equivalence class of \(\rho(\phi_{(\omega, A+F)})\) in \(\tfrac{1}{2}\T\),
	which is independent of the chosen lift, is called the
	\textbf{fibered rotation number} of the skew-product system
	\[
	(\omega, A+F) : (\theta, y) \in \T^d \times \R^2
	\longmapsto
	(\theta + \omega,\; (A + F(\theta))y)
	\in \T^d \times \R^2,
	\]
	and is denoted by \(\rho_{(\omega, A+F)}\).

	\subsection{Structure of the spectrum}
	We review here the KAM theory of Eliasson \cite{E92} and Hadj Amor
	\cite{HA09} for the reducibility of the Schr\"odinger cocycle $(\omega,
	A_0+F_0(\cdot))$. These works relate the reducibility and the fibered
	rotation number globally, and improve the previous works by Dinaburg-Sinai
	\cite{DS75} and Moser-P\"oschel \cite{MP84}. Here we will not prove the
		corresponding results (Propositions \ref{propsana} and \ref{propsana1})
		referring to the work \cite{Z16} (see also \cite{BZ20}) where a detailed proof was
		given.
        
Throughout this paper, we set
\[
\varepsilon_0 = |V|_r, \quad \sigma = \frac{1}{200},
\]
and define the sequences as in \cite{HA09}:
\[
\varepsilon_{j+1} = \varepsilon_j^{1+\sigma}, \quad N_j = 4^{j+1} \sigma |\ln \varepsilon_j|, \quad j \geq 0.
\]
We also define
\begin{equation}\label{stor}
\lkr k := \frac{\langle k, \omega \rangle}{2}, \quad k \in \mathbb{Z}^d,
\end{equation}
and denote by \(|\cdot|_{\mathcal{C}_W^k(S)}\) the \(\mathcal{C}^k\) norm of a function that is Whitney smooth on a set \(S \subset \mathbb{R}\). For a function that is analytic on \(\mathbb{T}^d\) (or \(2\mathbb{T}^d\)) and Whitney smooth on \(S\), we denote by
\[
|\cdot|_{\mathcal{C}_W^k(S), \mathbb{T}^d}
\quad \text{or} \quad
|\cdot|_{\mathcal{C}_W^k(S), 2\mathbb{T}^d}
\]
the supremum norm on \(\mathbb{T}^d\) (or \(2\mathbb{T}^d\)) and the \(\mathcal{C}_W^k\) norm on \(S\). In particular, if \(S\) is a union of finitely many intervals, we omit the subscript \(W\) in these norms.
	
	Furthermore, we denote the fibered rotation number of the Schr\"odinger cocycle
	$(\omega,A_0+F_0)$ by $\rho\equiv\rho_{(\omega,A_0+F_0)}$. Set $\Sigma$ to be the spectrum of $H_{\theta}$.
	We mention that $\Sigma$ is independent of $\theta$ and $\rho:\R\to [0,\pi]$ is a non-decreasing function with
	$$\rho(E)\left\{ \begin{array}{ll}
		=0 \ , & E\leq \inf\Sigma  \\
		\in (0,\pi) \ ,& E\in (\inf\Sigma, \sup\Sigma) \\
		=\pi \ ,& E\geq \sup\Sigma
	\end{array}  \right.  \ , $$
	By the gap-labeling theorem \cite{JM82}, $\rho$ is constant in a gap of $\Sigma$  (i.e., an interval on $\R$ in the resolvent set of $H_\theta$), and each gap is labeled with
	$k\in \Z^d$ such that $\rho=\lkr k$ mod $\pi$ in this gap.

	The following proposition constitutes the foundational result of the KAM (Kolmogorov-Arnold-Moser) iteration scheme in the infinite limit. It establishes that for sufficiently small potentials, the Schr\"odinger cocycle is reducible on a full-measure subset of the spectrum. By characterizing the spectrum $\Sigma$ as a Cantor set and providing the existence of the limit conjugacy matrices, this theorem defines the global landscape on which the dynamics occur. Its primary role in the subsequent analysis is to rigorously control the measure of the resonant sets excluded during the iteration. These measure estimates are indispensable for the approximation arguments in Lemma \ref{lem:disper-log}, ensuring that the integral over the fractal Cantor set $\Sigma$ is well-defined and can be approximated by integrals over finite unions of intervals with controllable error.
	
	\begin{proposition}[Proposition 1 of \cite{Z16}]\label{propsana} There exists $\varepsilon_*=\varepsilon_*(\gamma,\eta, r,d)>0$ such that if $|V|_r=\varepsilon_0<\varepsilon_*$,
		then, for any $j\in \N$, there exists a Borel set $\Sigma_{j}\subset
		\Sigma$, with $\{\Sigma_{j}\}_j$ mutually disjoint, satisfying
		\begin{align*}
			|\rho\left(\Sigma_{j+1}\right)|&\leq 3 |\ln\varepsilon_j|^{2 d}
			\varepsilon_{j}^{\sigma} \ , \quad j\geq 0 \ ,
			\\ \left|\Sigma\setminus\widetilde \Sigma\right|&=0 \ ,\quad \widetilde
			\Sigma:=\cup_{j\geq 0}\Sigma_{j},
		\end{align*}
		such that the following statements hold.
		\begin{itemize}
			\item [(1)] The Schr\"odinger cocycle $(\omega, A_0+F_0)$ is {\it
				reducible} on $\widetilde
			\Sigma$. More precisely, there
			exist $Z$ and $B$, with $Z:\widetilde
			\Sigma \times2\T^d\to SL(2,\R)$
			analytic on $2\T^d$ and $B:\widetilde
			\Sigma\to SL(2,\R)$
			s.t. $Z$ conjugates $A_0+F_0$ to $B$, namely
			$$Z(\cdot+\omega)^{-1} (A_0+F_0(\cdot)) \, Z(\cdot)=B \ .$$
			Furthermore
			$B$ is ${\cC}^1$ in the sense of Whitney on each ${\Sigma}_j$, and
			\begin{equation}\label{limit_state_whitney}
				|B-A_0|_{{\cC}^1_W({\Sigma}_0)}\leq \varepsilon_0^{\frac13} \ ; \qquad |B|_{{\cC}^1_W({\Sigma}_{j+1})}\leq N_j^{10\eta} \ ,\quad j\geq 0 \ .
			\end{equation}
			\item [(2)] The eigenvalues of $B\big|_{\Sigma_j}$, are of the form
			$e^{\pm{\rm i}\xi}$, with $\xi\in \R$, and, for every $j\geq 0$, there is $k_j:\widetilde
			\Sigma\rightarrow\Z^d$, such that
			\begin{itemize}
				\item[$\bullet$]$0<|k_j|\leq N_{j}$ on $\Sigma_{j+1}$, and $k_l=0$
				on $\Sigma_j$ for $l\geq j$,
				\item[$\bullet$]  $\xi=\rho-\sum_{l\geq0} \lkr{k_l}$ and $0<|\xi|_{\Sigma_{j+1}}< 2 \varepsilon_{j}^{\sigma}$.
			\end{itemize}
		\end{itemize}
	\end{proposition}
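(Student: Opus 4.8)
The plan is to prove this by the Eliasson-type KAM reducibility scheme for the analytic $SL(2,\R)$ cocycle $(\omega,A_0(E)+F_0)$, following Eliasson~\cite{E92} and Hadj Amor~\cite{HA09} and carrying out the quantitative bookkeeping of~\cite{Z16}. One iterates with the energy $E$ as a parameter: at the $j$-th stage one has a conjugate cocycle $(\omega,A_j+F_j)$, with $A_j\in SL(2,\R)$ constant and Whitney-$\cC^1$ in $E$, eigenvalues $e^{\pm\mathrm{i}\xi_j}$ with $\xi_j$ small, $F_j$ analytic on $\{|\IM\theta|<r_j\}$ with $|F_j|_{r_j}\le\varepsilon_j$, and the conjugating matrix analytic on the \emph{doubled} torus $2\T^d$ --- the doubling being forced by the half-integer characters that enter the resonant step.

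For a single step, write the sought conjugacy as $e^{Y}$ with $Y$ small and trace-free; linearization reduces the step to the homological equation $Y(\theta+\omega)A_j-A_jY(\theta)=\Pi_{\le N_j}(A_j^{-1}F_j)(\theta)-\widehat{A_j^{-1}F_j}(0)$, truncated at Fourier modes $|k|\le N_j$. On the Fourier side the divisors are governed by $2\lkr k$ and by $2\lkr k\pm2\xi_j$ modulo $\pi$; the purely Diophantine family is controlled by~\eqref{dio}. In the \emph{non-resonant} case, i.e. $\mathrm{dist}(2\lkr k\pm2\xi_j,\pi\Z)\ge\varepsilon_j^{\sigma}$ for all $0<|k|\le N_j$, one solves with $|Y|_{r_{j+1}}\lesssim\varepsilon_j^{1-2\sigma}$ --- a power $\varepsilon_j^{-\sigma}$ lost to the divisors and a fixed slice of the strip lost to the truncation --- and obtains $|F_{j+1}|_{r_{j+1}}\lesssim\varepsilon_j^{2-4\sigma}+\varepsilon_j e^{-N_j(r_j-r_{j+1})}\le\varepsilon_j^{1+\sigma}=\varepsilon_{j+1}$, the choice $N_j\sim|\ln\varepsilon_j|$ balancing the analytic tail; here $A_{j+1}-A_j=O(\varepsilon_j)$ keeps $\xi_{j+1}$ small. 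By~\eqref{dio} and $|k|\le N_j$ there is \emph{at most one} mode $k_j$ violating the non-resonance condition; the energies for which the \emph{first} violation occurs exactly at stage $j$ constitute $\Sigma_{j+1}$, and there one first applies a resonance-cancelling conjugacy --- an $SL(2,\R)$-valued map analytic on $2\T^d$ built from the character $\theta\mapsto\tfrac12\langle k_j,\theta\rangle_{\R^d}$ --- which replaces $\xi_j$ by $\xi_{j+1}:=\xi_j-\lkr{k_j}$, now of size $<2\varepsilon_j^{\sigma}$, before running the non-resonant step. This produces the labels $k_j:\widetilde\Sigma\to\Z^d$ with $0<|k_j|\le N_j$ on $\Sigma_{j+1}$, $k_l=0$ on $\Sigma_j$ for $l\ge j$, and the bookkeeping identity $\xi=\rho-\sum_{l\ge0}\lkr{k_l}$.

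The measure bound rests on monotonicity and mild non-degeneracy of $\rho(E)$ on $\Sigma$: a resonance $2\lkr k\pm2\xi_j\in\pi\Z+(-\varepsilon_j^{\sigma},\varepsilon_j^{\sigma})$, combined with $2\xi_j=2\rho-2\sum_{l<j}\lkr{k_l}$ up to a controlled error, confines $\rho(E)$ to an interval of $\rho$-length $\lesssim\varepsilon_j^{\sigma}$, and since the number of $k$ relevant at stage $j$ is polynomial in $|\ln\varepsilon_j|$ one gets $|\rho(\Sigma_{j+1})|\lesssim|\ln\varepsilon_j|^{2d}\varepsilon_j^{\sigma}$, the stated bound. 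These bounds are summable in $j$, so a Borel--Cantelli argument --- together with the regularity of $\rho$ on $\Sigma$ furnished by the same construction --- shows that $\Sigma\setminus\widetilde\Sigma$, which is exactly the set of energies resonant at infinitely many stages, has measure zero. Finally, on each $\Sigma_j$ the iteration terminates (no further resonance), the conjugacies telescope to a limit $Z$ analytic on $2\T^d$ and Whitney-$\cC^1$ in $E$ conjugating $A_0+F_0$ to a constant $B$; summing the increments over the non-resonant stages gives $|B-A_0|_{\cC^1_W(\Sigma_0)}\le\varepsilon_0^{1/3}$, while on $\Sigma_{j+1}$ the single resonance-cancelling factor contributes the polynomial loss $|B|_{\cC^1_W(\Sigma_{j+1})}\le N_j^{10\tau}$.

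I expect the main obstacles to be: (a) the resonant step --- designing the resonance-cancelling conjugacy so that it is simultaneously $SL(2,\R)$-valued, analytic on $2\T^d$, rotates the fibered rotation number by exactly $\lkr{k_j}$, and enlarges $F_j$ by only a controlled amount; and (b) propagating the Whitney-$\cC^1$-in-$E$ estimates through infinitely many steps, since $\partial_E$ of the small divisors costs inverse powers of the resonance gaps and one has to check that these losses are absorbed by the super-exponential gain $\varepsilon_{j+1}=\varepsilon_j^{1+\sigma}$ together with the polynomial room $N_j^{10\tau}$. Because of the length and technicality of this, in the write-up one simply invokes~\cite{Z16} (see also~\cite{BZ20,HA09}), as is done here.
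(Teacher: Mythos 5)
The paper itself does not prove this proposition; it explicitly defers to Proposition~1 of \cite{Z16} (see also \cite{BZ20}), and your sketch faithfully reproduces the Eliasson--Hadj Amor KAM reducibility scheme carried out quantitatively there, ending with the same deference, so you are taking essentially the same route as the paper. One small imprecision worth flagging: $\Sigma_{j+1}$ is the set of energies whose \emph{last} resonance occurs at stage $j$ (forcing $k_l=0$ for $l\ge j+1$ there), not those whose first resonance occurs at stage $j$; this does not affect the measure bound or the rest of the argument, since both sets lie inside the set of energies resonant at stage $j$.
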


	While Proposition \ref{propsana} describes the asymptotic limit, Proposition \ref{propsana1} characterizes the system after a finite number $J$ of iterative steps. This result is the computational engine of the proof. Since the spectrum $\Sigma$ is fractal and totally disconnected, direct calculation is impossible. Proposition \ref{propsana1} circumvents this by providing a sequence of smooth approximate rotation numbers $\rho_J$ defined on a union of open intervals $\Gamma^{(J)}_j$. Crucially, it establishes quantitative lower bounds on the derivatives of the energy with respect to the rotation number. These derivative bounds are the specific inputs required to apply Van der Corput's lemma in Section \ref{sec:3}, which ultimately yields the decay rates of the oscillatory integrals.
	\begin{proposition}[Proposition 2 of \cite{Z16}]\label{propsana1}
		Let $|V|_r=\varepsilon_0< \varepsilon_*$ be as in Proposition \ref{propsana}.
		Given any $J\in\N$, for $0\leq j\leq J$, there exists $\Gamma^{(J)}_j\subset[\inf\Sigma, \sup\Sigma]$, satisfying
		\begin{itemize}
			\item $\Sigma_{j}\subset \Gamma^{(J)}_{j}$ for $0\leq j\leq J$,
			\item $\{\Gamma^{(J)}_j\}_{j=0}^{J}$ are mutually disjoint and $\overline{\bigcup_{j=0}^{J}\Gamma^{(J)}_j}=[\inf\Sigma,\sup\Sigma]$
			\item $\bigcup_{j=0}^J \Gamma^{(J)}_j$ consists of at most
			$|\ln\varepsilon_0|^{2 J^2 d}$ open intervals,
			\item If $J\geq 1$, then
			$
			\left|\rho\left(\Gamma^{(J)}_{j+1}\right)\right|\leq 3|\ln\varepsilon_j|^{2 d} \varepsilon_{j}^{\sigma}
			$ for $0\leq j \leq J-1$.
		\end{itemize}
		Furthermore, the following statements hold.
		
		\noindent
		{\bf (S1)}
		There exist
		$\left\{ \begin{array}{l}
			A_J:\Gamma_j^{(J)}\rightarrow SL(2,\R)\\[1mm]
			F_J:\Gamma_j^{(J)}\times \T^d\rightarrow gl(2,\R) \ analytic \  on \  \T^d\\[1mm]
			Z_J:\Gamma_j^{(J)}\times2\T^d\rightarrow SL(2,\R) \ analytic \  on \ 2\T^d
		\end{array}
		\right.$, $0\leq j \leq J$,
		
		\noindent all of which are smooth on each connected component of $\Gamma^{(J)}_j$,
		such that
		$$Z_J(\cdot+\omega)^{-1} (A_0+F_0(\cdot)) \, Z_J(\cdot)=A_J+F_J(\cdot) \ ,$$
		with {$|F_J|_{{\cC}^3(\Gamma^{(J)}_{j}),\T^d}\leq \varepsilon_J$}, $0\leq j \leq J$, and
		\begin{equation}\label{sigma_m_0}
			|A_{J}- A_0|_{{\cC}^3(\Gamma^{(J)}_{0})}\leq \varepsilon_0^{\frac12} \ , \qquad
			|Z_{J}-Id.|_{{\cC}^3(\Gamma^{(J)}_{0}),2\T^d}\leq \varepsilon_0^{\frac13} \ .
		\end{equation}
		If $J\geq 1$, then for $0\leq j\leq J-1$,
		\begin{equation}\label{sigma_m_J}
			|A_{J}|_{{\cC}^3(\Gamma^{(J)}_{j+1})}\leq \varepsilon_j^{-\frac{\sigma}6} \ , \qquad
			|Z_{J}|_{{\cC}^3(\Gamma^{(J)}_{j+1}),2\T^d}\leq \varepsilon_j^{-\frac{\sigma}3} \ ,
		\end{equation}
		and, on $\Gamma^{(J)}_{j+1}$,
		\begin{equation}\label{trace_A_J}
			\varepsilon_{j}^{\frac{\sigma}{4}}\leq |({\rm tr}A_J)'| \leq N_{j}^{10\eta} \ .
		\end{equation}
		Moreover, for $0\leq j\leq J$,
		\begin{equation}\label{error_whitney}
			|A_{J}- B|_{{\cC}^1_W(\Sigma_j)} \leq \varepsilon_{J}^{\frac14} \ ,\quad |Z_J-Z|_{{\cC}^1_W(\Sigma_j), 2\T^d}\leq \varepsilon_{J}^{\frac14} \ .
		\end{equation}
		
		\smallskip
		
		\noindent
		{\bf (S2)} $A_{J}$ has two eigenvalues $e^{\pm{\rm i}\alpha_{J}}$ with $\alpha_J\in \R \cup {\rm i}\R$. For $\xi_{J}:={\rm Re}\alpha_{J}$, we have
		\begin{itemize}
			\item $|\xi_{J}-\xi|_{\Sigma_j}\leq \varepsilon_{J}^{\frac14}$, $0\leq j\leq J$.
			\item $|\xi_{J}-\rho|_{\Gamma^{(J)}_{0}}\leq \varepsilon_J^{\frac14}$.
			\item If $J\geq 1$, then
			\begin{itemize}
				\item $|\xi_{J}|_{\Gamma^{(J)}_{j+1}}\leq \frac32\varepsilon_j^{\sigma}$, $0\leq j\leq J-1$.
				\item There is $k_j: \bigcup_{l=0}^{J}\Gamma^{(J)}_l\rightarrow\Z^d$, $0\leq j\leq J-1$, constant on each connected component of $\bigcup_{l=0}^{J}\Gamma^{(J)}_l$, with $0<|k_j|\leq N_{j}$ on $\Gamma^{(J)}_{j+1}$ and $k_l=0$ on $\Gamma^{(J)}_{j+1}$ for $l\geq j+1$ such that
				$\left|\xi_{J} + \sum_{l=0}^{J-1}\lkr{ k_l}-\rho\right|_{\Gamma^{(J)}_{j+1}}\leq \varepsilon_J^{\frac14}$.
			\end{itemize}
		\end{itemize}
		
		\smallskip
		
		\noindent
		{\bf (S3)} $\bigcup_{j=0}^J\{\Gamma^{(J)}_{j}: |\sin\xi_{J}|> \frac32 \varepsilon_J^{\frac{1}{20}}\}$ has at most $2|\ln\varepsilon_0|^{2J^2 d}$ connected components, on which $\xi_{J}$ is smooth with $\xi_{J}'=-\frac{({\rm tr}A_{J})'}{2\sin\xi_{J}}$. If $J\geq 1$, then, on $\{\Gamma^{(J)}_{j+1}: |\sin\xi_{J}|> \frac32 \varepsilon_J^{\frac{1}{20}}\}$, $0\leq j\leq J-1$,
		\begin{equation}\label{esti_plat}
			\frac13< \xi_{J}' \leq \frac{N_j^{10\eta}}{|\sin\xi_{J}|} \ , \qquad \frac{\varepsilon_{j}^{\frac{3\sigma}{4}}}{4|\sin\xi_{J}|^3}< |\xi_{J}'' |\leq \frac{N_j^{20\eta}}{|\sin\xi_{J}|^{3}} \ .
		\end{equation}
		
		\smallskip
		
		\noindent
		{\bf (S4)}
		$\left|\rho(\{(\inf\Sigma,\sup\Sigma): |\sin\xi_{J}\right|\leq \frac32 \varepsilon_J^{\frac{1}{20}} \})|\leq \varepsilon_J^{\frac{1}{24}}$ and for $0\leq j\leq J$, $|\xi_{J}(\Gamma^{(J)}_j\setminus\Sigma_j)|\leq \varepsilon_{J}^{\frac{7\sigma}{8}}$.

	\end{proposition}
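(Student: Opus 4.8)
The plan is to run the analytic KAM reducibility scheme for the Schr\"odinger cocycle $(\omega,A_0+F_0)$ for exactly $J$ steps, keeping the energy $E$ as a parameter throughout and tracking the $E$-dependence at every stage; this is the route of \cite{Z16} (refining Eliasson \cite{E92} and Hadj Amor \cite{HA09}), so I only indicate the structure. The \emph{basic iterative step} starts from a cocycle $(\omega,A+F(\cdot))$ with $A=e^{B}\in SL(2,\R)$ having eigenvalues $e^{\pm\mathrm{i}\alpha}$ and with $F$ analytic on a band $\{|\mathrm{Im}\,\theta|<h\}$, $|F|_{h}\leq\varepsilon$: one truncates $F$ at Fourier order $N$, projects off its mean, and solves for $Y$ on a slightly thinner band the homological equation $Y(\theta+\omega)A-AY(\theta)=(\text{truncated mean-free part of }F)$, which is solvable with $|Y|$ of order $\varepsilon$ over the relevant small divisor provided the \emph{non-resonance condition} that $2\alpha$ is $\gtrsim\varepsilon^{\sigma}$-separated from $\langle k,\omega\rangle$ modulo $2\pi\Z$ holds for all $0<|k|\leq N$. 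Then $Z=e^{Y}$ yields $(\omega,A_{+}+F_{+})$, where $A_{+}$ absorbs the $\theta$-average of $F$ into the old constant part and $|F_{+}|\lesssim\varepsilon^{2}+\varepsilon e^{-Nh}$, which with $N=N_{j}$ is $\lesssim\varepsilon_{j}^{1+\sigma}=\varepsilon_{j+1}$. If instead there is a resonant $k$ --- unique, by the Diophantine condition \eqref{dio} and the smallness of $N$ --- one first conjugates to diagonalize $A$ and then applies the rotation by angle $\tfrac12\langle k,\theta\rangle$, which is well-defined as an $SL(2,\R)$-valued map only on $2\T^{d}$; this is precisely why the conjugacies $Z_{J}$ are defined on $2\T^{d}$. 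This resonant conjugation shifts $\alpha\mapsto\alpha-\lkr k$, makes the block near-parabolic, and permits one further non-resonant step; the relation $\xi_{J}+\sum_{l=0}^{J-1}\lkr{k_{l}}=\rho+O(\varepsilon_{J}^{1/4})$ in statement (S2) is exactly the running record of these shifts.

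Next I would organize the \emph{parameter exclusion} that defines the sets $\Gamma^{(J)}_{j}$. Running the scheme with $E$ as a parameter, the set of $E$ for which step $j$ is resonant with a given label $k$ is, by the implicit function theorem applied to $E\mapsto\alpha_{j}(E)$, an interval of length $\lesssim(\text{resonance window})/|\alpha_{j}'(E)|$; taking the union over $0<|k|\leq N_{j}$ and over the finitely many surviving branches defines $\Gamma^{(J)}_{j+1}$. One checks inductively that $\bigcup_{j=0}^{J}\Gamma^{(J)}_{j}$ is a union of at most $|\ln\varepsilon_{0}|^{2J^{2}d}$ open intervals with closure $[\inf\Sigma,\sup\Sigma]$ and $\Sigma_{j}\subset\Gamma^{(J)}_{j}$ (any $E$ that remains resonant at every later step stays in the excluded sets, hence in their closure), and that $|\xi_{J}|\lesssim\varepsilon_{j}^{\sigma}$ on $\Gamma^{(J)}_{j+1}$ by construction, whence the measure estimate $|\rho(\Gamma^{(J)}_{j+1})|\lesssim|\ln\varepsilon_{j}|^{2d}\varepsilon_{j}^{\sigma}$, since $\rho$ is Lipschitz in $\xi_{J}$ away from the parabolic set.

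The remaining quantitative claims reduce to the \emph{derivative estimates}, and this is where I expect the main obstacle. The upper bounds in \eqref{trace_A_J} and \eqref{esti_plat} are soft: $A_{0}(E)$ is affine in $E$ and every conjugation loses only a small power of $\varepsilon$ in the $\mathcal{C}^{3}$ norm, so $A_{J}$ is $\mathcal{C}^{3}$ in $E$ with $|(\mathrm{tr}\,A_{J})'|\lesssim N_{j}^{10\tau}$, and the upper bounds on $\xi_{J}'$ and $\xi_{J}''$ follow by differentiating $\mathrm{tr}\,A_{J}=2\cos\xi_{J}$ on $\{|\sin\xi_{J}|>\tfrac32\varepsilon_{J}^{1/20}\}$, which gives $\xi_{J}'=-(\mathrm{tr}\,A_{J})'/(2\sin\xi_{J})$ and, after one more differentiation, the bound on $\xi_{J}''$. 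The delicate point is the lower bound $|(\mathrm{tr}\,A_{J})'|\geq\varepsilon_{j}^{\sigma/4}$ (equivalently $\xi_{J}'>\tfrac13$): one must show this derivative does not degenerate even though $Z_{J}$ may be as large as $\varepsilon_{j}^{-\sigma/3}$, by extracting from $Z_{J}$ the large but $E$-\emph{independent} rotation factors introduced at the resonant steps, so that the affine nondegeneracy $\partial_{E}A_{0}(E)=\mathrm{diag}(-1,0)$ survives after averaging in $\theta$; morally $\xi_{J}'$ stays positive because $\rho'>0$ on the spectrum while $\rho=\xi_{J}+\sum_{l}\lkr{k_{l}}$ with each $\lkr{k_{l}}$ constant on a branch. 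Finally, \eqref{sigma_m_0}, \eqref{sigma_m_J}, \eqref{error_whitney} and the remaining parts of (S2)--(S4) follow by comparing $A_{J},Z_{J},\xi_{J}$ with the infinite-step limits $B,Z,\xi$ of Proposition \ref{propsana}: on $\Sigma_{j}$ the scheme has already terminated, so the tail corrections telescope to $\sum_{l\geq J}\varepsilon_{l}\lesssim\varepsilon_{J}^{1/4}$ in $\mathcal{C}^{1}_{W}$, and everything else is bounded bookkeeping along the $J$ iterates.
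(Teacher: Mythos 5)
The paper does not prove Proposition~\ref{propsana1} at all: it is stated verbatim as ``Proposition 2 of \cite{Z16}'', and the authors say explicitly (just before Proposition~\ref{propsana}) that they will not prove Propositions~\ref{propsana} and~\ref{propsana1}, referring to \cite{Z16} (see also \cite{BZ20}) for the detailed argument. So there is no in-paper proof to compare against; the intended ``proof'' is the citation.

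Your sketch correctly identifies the underlying machinery --- a $J$-step parametrized KAM/Eliasson--Hadj~Amor reducibility scheme with energy $E$ as a Whitney parameter, the dichotomy between non-resonant steps (solve the homological equation, quadratic error) and resonant steps (diagonalize, rotate by $\tfrac12\langle k,\theta\rangle$ on $2\T^d$, shift $\alpha\mapsto\alpha-\lkr{k}$), and the bookkeeping $\rho_J=\xi_J+\sum_l\lkr{k_l}$. This is indeed the structure of Zhao's proof. However, as a proof the sketch has a genuine gap exactly where you flag it: the lower bound $|(\mathrm{tr}\,A_J)'|\geq\varepsilon_j^{\sigma/4}$ (equivalently $\xi_J'>\tfrac13$) in \eqref{trace_A_J} and \eqref{esti_plat} is only given a ``moral'' explanation (that the $E$-independent rotation factors can be peeled off $Z_J$ so the affine nondegeneracy of $A_0$ survives). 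This is precisely the quantitative heart of \cite{Z16}: one has to control $\partial_E Z_J$ and $\partial_E F_J$ at every iterate well enough to show the cumulative loss stays strictly below the affine gain $|\partial_E\mathrm{tr}\,A_0|=1$, and this requires the specific choices of $N_j$, $\sigma$, and the $\mathcal{C}^3$ (not just $\mathcal{C}^0$) propagation in \eqref{sigma_m_0}--\eqref{sigma_m_J}. The appeal to ``$\rho'>0$ on the spectrum'' is circular at the level of a proof, since $\rho'>0$ a.e.\ is what one is effectively establishing a quantitative version of. Similarly, the interval-count $|\ln\varepsilon_0|^{2J^2d}$, the measure bound $|\rho(\Gamma^{(J)}_{j+1})|\leq 3|\ln\varepsilon_j|^{2d}\varepsilon_j^\sigma$, and the comparison $|\xi_J(\Gamma^{(J)}_j\setminus\Sigma_j)|\leq\varepsilon_J^{7\sigma/8}$ of (S4) all require explicit induction with the specific exponents and are asserted rather than derived. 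In short: the strategy is right and matches the cited source, but the sketch would need the full iterative estimates of \cite{Z16} to close, and the paper's own choice to cite rather than reprove is the pragmatic resolution.
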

	
	From now on, we denote $\rho_J:=\xi_{J} + \sum_{l=0}^{J-1}\lkr{ k_l}$,
	which gives an approximation of $\rho$. In particular, $\rho_0=\xi_0$,
	and
	\begin{equation}
		\label{rhoj}
		\left|\rho_{J}-\rho\right|_{\Sigma_j}\leq \varepsilon_j^{\frac14} \ .
	\end{equation}

	\subsection{Modified spectral transform}\label{spectrans}

	For every $E\in\Sigma$, let $\mathcal K(E)$ and $\mathcal J(E)$ be two linearly independent generalized eigenvectors of $H_\theta$.  
Define the spectral transform $\mathcal S$ as follows: for any $u\in\ell^2(\mathbb Z)$, set
\begin{equation}
  \label{spec}
  (\mathcal S u)(E):=
  \begin{pmatrix}
    \sum_{n\in\mathbb Z}u_n \mathcal K_n(E)\\[1mm]
    \sum_{n\in\mathbb Z}u_n \mathcal J_n(E)
  \end{pmatrix}
  :=
    \begin{pmatrix}
        g_{1}\left(E\right)\\
        g_{2}\left(E\right)\\
    \end{pmatrix}.
\end{equation}
	Given any matrix of measures on $\R$, namely $\d\varphi=\left(\begin{array}{cc}
		\d\varphi_{11} & \d\varphi_{12} \\[1mm]
		\d\varphi_{21} & \d\varphi_{22}
	\end{array}\right)$,
	let ${\cL}^2(\d\varphi)$ be the space of the vectors
	$G=(g_j)_{j=1,2}$, with $g_j$ functions of $E\in\R$ satisfying
	\begin{equation}\label{gen_L2}
		\|G\|_{{\cL}^2(\d\varphi)}^2:=\sum_{j,k=1}^2 \int_\R g_j \, \bar g_k \,  \d\varphi_{jk}<\infty \ .
	\end{equation}

	It is well known that (see e.g. \cite{CL55})  there exists a Hermitian matrix of measures $\mu=(\mu_{jk})_{j,k=1,2}$, with $\mu_{jk}$ non-decreasing functions, such that $\cS:\ell^2(\Z)\to \cL^2(\d\mu)$ is
		unitary. We remark that by this conclusion  the spectral transform is invertible. As anticipated in the introduction it is not known how to construct the measure $\d\mu$, however in \cite{Z16} a procedure to construct an approximate measure was developed.

	Serving as the bridge between the physical space and the spectral domain, Proposition \ref{prop:MST} addresses the lack of an explicit expression for the exact spectral measure $\d\mu$. Instead of relying on the abstract existence of a unitary transformation (as in Proposition \ref{prop:MST}), this proposition constructs an explicit, approximate spectral transform $\mathcal{S}$ using the generalized eigenfunctions derived from the reducibility matrices of Propositions \ref{propsana} and \ref{propsana1}. It utilizes the density $\rho^{\prime}\d E$ as a practical substitute for the exact measure. Although this transformation is not strictly unitary, it is shown to be a bounded operator with a bounded inverse (norm equivalence). This formulation allows the solution $u(t)$ to be represented as an oscillatory integral over the rotation number, transforming the problem of dispersive decay into the estimation of integrals handled by the machinery of the previous two propositions.
	\begin{proposition}[Section 4.2 in \cite{Z16}]\label{prop:MST}
		On the full measure subset
		$\widetilde{\Sigma}=\bigcup_{j\geq0}\Sigma_j$ of the spectrum, for any fixed $\theta\in \T^d$, any $E\in\widetilde{\Sigma}$, there exist two linearly independent
		generalized eigenvectors ${\cK}(E)$ and ${\cJ}(E)$ of $H_\theta$
		with the following properties: define the spectral transform according
		to \eqref{spec} and consider the matrix of measures $\d\varphi$ given by
        \begin{equation*}
            \d\varphi\big|_{\Sigma}:=\frac{1}{\pi}
            \begin{pmatrix}
                \rho^{\prime}&0\\
                0&\rho^{\prime}\\
            \end{pmatrix}
            \d E,\quad\quad\d\varphi\big|_{\R\setminus\Sigma}=0,
        \end{equation*}
		then we have,  for any $u\in\ell^2(\Z)$,
		\begin{equation}\label{well-defined}
			\left(1-\varepsilon_0^{\frac{\sigma^2}{10}}\right)\norm{u}^2_{\ell^2(\Z)}\leq\norm{\mathcal{S}u}_{\mathcal{L}^{2}\left(\d\varphi\right)}^2\leq\left(1+\varepsilon_0^{\frac{\sigma^2}{10}}\right)\norm{u}^2_{\ell^2(\Z)} \ ,
		\end{equation}
		and also
		\begin{equation}
			\label{infinito}
			\left| \frac{1}{\pi}\int_\Sigma \left(g_1(E){\cK}_n(E)+g_2(E){\cJ}_n(E)\right) \rho^{\prime}\d E -u_n\right|\leq
			\varepsilon_0^{\frac{\sigma^2}{10}}\norm{u}_{\ell^\infty\left(\Z\right)}.
		\end{equation}
        Furthermore, the functions ${\cK}(E)$ and ${\cJ}(E)$ have the
		following properties:
		\begin{equation}\label{K_and_J}
			{\cK}_n(E)=\sum_{n_{\Delta}=n,n\pm1}\beta_{n, n_{\Delta}}(E)\sin n_{\Delta}\rho(E) \ , \quad {\cJ}_n(E)=\sum_{n_{\Delta}=n,n\pm1}\beta_{n,n_{\Delta}}(E)\cos n_{\Delta}\rho(E) \ ,
		\end{equation}
		with $\rho$ the fibered rotation number of the cocycle $(\omega, A_0+F_0)$ and 
        \begin{equation}\label{eq:beta on SJ}
          |\beta_{n, n_{\Delta}}-\delta_{n,n_{\Delta}}|_{\Sigma_0}\leq \varepsilon_0^{\frac14} \ , \qquad |\beta_{n, n_{\Delta}}|_{\Sigma_{j+1}}\leq \varepsilon_j^\sigma, \quad j\geq 0.  
        \end{equation}
		Given any $J\in\N$, there exist $\beta^{J}_{n, n_{\Delta}}$, smooth on each connected component of $\Gamma^{(J)}_j$, satisfying
		\begin{equation}\label{esti_beta_0}
			\left|\beta^{J}_{n,n_\Delta} -\delta_{n, n_\Delta}\right|_{{\cC}^2(\Gamma^{(J)}_0)}\leq \varepsilon_{0}^{\frac14} \ ,
		\end{equation}
		and if $J\geq 1$, then
		\begin{equation}\label{esti_beta_j+10}
			|\beta_{n,n_\Delta}^{J}|_{{\cC}^1(\Gamma^{(J)}_{j+1})}\leq \varepsilon_j^{3\sigma} \ ,\quad 0\leq j\leq J-1 \ .
		\end{equation}
		Moreover,
		\begin{equation}\label{error3}
			\left|\beta_{n, n_{\Delta}}-\beta^{J}_{n, n_{\Delta}}\right|_{\Sigma_j}\leq 10\varepsilon_{J}^{\frac14} \ ,\quad 0\leq j\leq J \ .
		\end{equation}
	\end{proposition}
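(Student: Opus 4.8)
Since the statement is quoted from \cite[Section~4.2]{Z16}, I only sketch the strategy, which breaks into three steps. \textbf{Step 1 (constructing $\cK,\cJ$ and the normal form \eqref{K_and_J}).} Fix $\theta$ and $E\in\Sigma_j\subset\widetilde\Sigma$; discarding the measure-zero set $\{\sin\xi=0\}$ (whose finite-$J$ thickenings are negligible in every integral by part (S4) of Proposition~\ref{propsana1}), the constant matrix $B=B(E)$ of Proposition~\ref{propsana} is elliptic. Conjugating $B$ by a constant to the rotation $R_\xi$ and telescoping \eqref{eq:cocycle}, one writes the $n$-step transfer matrix as $\widetilde Z(\theta+n\omega)R_{n\xi}\widetilde Z(\theta)^{-1}$, with $\widetilde Z$ analytic on $2\T^d$ and, by \eqref{limit_state_whitney}, $\eps_0^{1/3}$-close to $Id$ on $\Sigma_0$. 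The two bounded real solutions of $H_\theta u=Eu$ obtained by reading off a fixed coordinate of $\widetilde Z(\theta+n\omega)R_{n\xi}v$, $v\in\{e_1,e_2\}$, normalized so that their Wronskian is $1$, are $\cK(E),\cJ(E)$. To pass to \eqref{K_and_J} I would use $\rho=\xi+\sum_{l\geq 0}\lkr{k_l}$ (Proposition~\ref{propsana}(2)): the factor $e^{{\rm i}n\sum_{l\geq 0}\lkr{k_l}}=e^{{\rm i}\langle\sum_{l\geq 0}k_l,\,n\omega\rangle/2}$ is, after fixing a lift of $\theta$ to $2\T^d$, the value at $n\omega$ of a character of $2\T^d$, hence can be absorbed into the Fourier series of $\widetilde Z(\theta+n\omega)$; this turns $e^{{\rm i}n\xi}\widetilde Z(\theta+n\omega)$ into $e^{{\rm i}n\rho}$ times a new quasi-periodic amplitude, and re-expressing that amplitude through the three-term recurrence so as to retain only the neighbouring sites $n,n\pm1$ (which avoids dividing by $\sin\rho$ at the band edges $\rho\in\{0,\pi\}$) produces \eqref{K_and_J}. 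The bounds \eqref{eq:beta on SJ} then follow from $|\widetilde Z-Id|\leq\eps_0^{1/3}$ on $\Sigma_0$ and from the crude bounds $|B|_{\cC^1_W(\Sigma_{j+1})}\leq N_j^{10\tau}$, $|\xi|_{\Sigma_{j+1}}<2\eps_j^\sigma$ on the resonant pieces; all estimates are uniform in $\theta$ because the reducibility bounds are.

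\textbf{Step 2 (identifying the measure and proving \eqref{well-defined}--\eqref{infinito}).} By \cite{CL55} there is an exact Hermitian matrix measure $\d\mu$ making $\cS$ unitary in the $(\cK,\cJ)$ basis of Step 1. I would identify $\d\mu$ on the (purely absolutely continuous) spectrum via Weyl--Titchmarsh/Kotani theory, which expresses the a.c. density in a solution basis through the boundary values of the half-line $m$-functions; since $\cK\mp{\rm i}\cJ$ are, up to the amplitude $\beta\approx\delta$, the Bloch solutions $e^{\pm{\rm i}n\rho(E)}$, the computation gives $\d\mu=\tfrac1\pi\mathrm{diag}(\rho',\rho')\,\d E$ up to a controlled error. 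For $V=0$ this is exact and elementary: $E=-2\cos\rho$ converts $\tfrac1\pi(4-E^2)^{-1/2}\d E$ into $\tfrac1\pi\d\rho$, while $\cK_n=\sin n\rho$, $\cJ_n=\cos n\rho$ diagonalise the free operator with the cross terms between the sine and cosine parts cancelling. Quantitatively: on $\Sigma_0$ the eigenfunctions are $\eps_0^{1/3}$-close to the free ones by \eqref{eq:beta on SJ}, so $\d\mu$ and $\d\varphi$ agree there up to $O(\eps_0^{1/3})$; on the tail $\bigcup_{j\geq 1}\Sigma_j$ both measures carry total mass at most $\sum_{j\geq 0}3|\ln\eps_j|^{2d}\eps_j^\sigma$, far below $\eps_0^{\sigma^2/10}$. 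Summing the two contributions yields \eqref{well-defined}, and inserting the same comparison into the exact inversion $u_n=\tfrac1\pi\int_\Sigma(g_1\cK_n+g_2\cJ_n)\,\d\mu$ and bounding the discrepancy by the eigenfunction bounds of \eqref{eq:beta on SJ} times the tail mass yields \eqref{infinito}.

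\textbf{Step 3 (smooth finite-step approximants).} I would re-run Step 1 with the finite-step data of Proposition~\ref{propsana1}: $Z_J$ conjugates $A_0+F_0$ to $A_J+F_J$ with $|F_J|_{\cC^3(\Gamma^{(J)}_j),\T^d}\leq\eps_J$, and $A_J,Z_J$ smooth and $\cC^3$-controlled on each component of $\Gamma^{(J)}_j$ by \eqref{sigma_m_0}--\eqref{sigma_m_J}. Defining $\beta^J_{n,n_\Delta}$ by the same algebra as in Step 1 but with $Z_J$, the constant conjugacy of $A_J$ to $R_{\xi_J}$, and $\rho_J$ in place of $Z$, $R_\xi$, $\rho$ (and simply discarding $F_J$), the $\cC^2$ bound \eqref{esti_beta_0} on $\Gamma^{(J)}_0$ and the $\cC^1$ bound \eqref{esti_beta_j+10} on $\Gamma^{(J)}_{j+1}$ drop out of the $\cC^3$ control, the two-derivative loss being absorbed by the square root in the passage to a rotation and by differentiating relations such as $\xi_J'=-\tfrac{({\rm tr}A_J)'}{2\sin\xi_J}$; the approximation \eqref{error3} follows from \eqref{error_whitney}, from $|F_J|\leq\eps_J$, and from $|\rho_J-\rho|_{\Sigma_j}\leq\eps_j^{1/4}$ in \eqref{rhoj}.

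The hard part is Step 2: the spectral matrix measure is unavailable in closed form while $\Sigma$ is a Cantor set, so one must show that the \emph{a priori} unknown $\d\mu$ is uniformly close to the explicit $\tfrac1\pi\mathrm{diag}(\rho',\rho')\,\d E$. This requires combining Weyl--Titchmarsh theory with the quantitative reducibility of Step 1 and, crucially, a summable control of the infinitely many resonant pieces $\Sigma_{j+1}$, on which $\widetilde Z$ is only bounded by $N_j^{10\tau}$ and $B$ is near-parabolic; it is precisely the measure smallness $|\rho(\Sigma_{j+1})|\leq 3|\ln\eps_j|^{2d}\eps_j^\sigma$ together with the control of $\{|\sin\xi_J|\text{ small}\}$ from (S4) that keeps the resulting series below the threshold $\eps_0^{\sigma^2/10}$.
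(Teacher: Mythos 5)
The paper does not prove Proposition~\ref{prop:MST}: it is stated as a citation of \cite[Section~4.2]{Z16}, and the surrounding text (``Here we will not prove the corresponding results \dots referring to the work \cite{Z16}'') makes this deliberate. So there is no ``paper's proof'' to compare against; what follows is an assessment of your sketch on its own merits.

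Your Step~1 is broadly consistent with the reducibility framework set up in Propositions~\ref{propsana}--\ref{propsana1}: diagonalize $B$ to $R_\xi$, telescope the cocycle, and absorb the character $e^{\mathrm{i}n\sum_l\lkr{k_l}}$ into the Fourier series so that the oscillation appears at frequency $\rho$ rather than $\xi$, with the three-term recurrence used to avoid dividing by $\sin\rho$ at the band edges. This is exactly the mechanism that makes \eqref{K_and_J} and \eqref{eq:beta on SJ} come out in terms of $\rho$ with $\beta$ controlled by $|Z-\mathrm{Id}|$ on $\Sigma_0$ and by $N_j^{10\tau}$ on $\Sigma_{j+1}$. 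Step~3 is likewise plausible: run the same algebra with $(Z_J,A_J,\rho_J)$ in place of $(Z,B,\rho)$, using the $\cC^3$ bounds \eqref{sigma_m_0}--\eqref{sigma_m_J} and the Whitney closeness \eqref{error_whitney}, \eqref{rhoj}.

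The gap is in Step~2, and it is a real one. You argue that \eqref{well-defined} follows because on $\Sigma_0$ the exact matrix measure $\d\mu$ and the explicit $\d\varphi$ are $O(\varepsilon_0^{1/3})$-close, while on $\bigcup_{j\geq1}\Sigma_j$ ``both measures carry total mass at most $\sum_j 3|\ln\varepsilon_j|^{2d}\varepsilon_j^\sigma$.'' A total-mass bound is \emph{not} sufficient to compare $\norm{\cS u}^2_{\cL^2(\d\varphi)}$ with $\norm{\cS u}^2_{\cL^2(\d\mu)}=\norm{u}^2_{\ell^2}$ for arbitrary $u\in\ell^2(\Z)$: you would either need a uniform bound on the Radon--Nikodym derivative $\tfrac{\d\varphi}{\d\mu}-1$, or a direct operator-norm estimate for the bilinear form $\int_{\Sigma_{j+1}}\bigl(\cK_n\cK_m+\cJ_n\cJ_m\bigr)\rho'\,\d E$ acting on $\ell^2$. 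Neither follows from the two facts you cite. On $\Sigma_{j+1}$ the density $\rho'$ can be large (up to order $N_j^{10\tau}/|\sin\xi|$, as \eqref{esti_plat} shows at the finite-$J$ level), and the compensating smallness must come from the amplitude bound $|\beta_{n,n_\Delta}|_{\Sigma_{j+1}}\leq\varepsilon_j^\sigma$ of \eqref{eq:beta on SJ} interacting with the rotation-number mass bound $|\rho(\Sigma_{j+1})|\leq 3|\ln\varepsilon_j|^{2d}\varepsilon_j^\sigma$; the resulting estimate is of Schur/Cotlar type on the kernel $\int_{\Sigma_{j+1}}\cK_n\cK_m\,\rho'\,\d E$, not a comparison of total masses. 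The same issue affects \eqref{infinito}, which is a pointwise inversion estimate and likewise requires controlling the off-diagonal of that kernel. Your appeal to Weyl--Titchmarsh/Kotani theory to identify $\d\mu$ is a legitimate alternative route, but it does not by itself supply the quantitative density comparison; the heavy lifting still lives in the kernel estimate you gloss over.

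In short: Steps 1 and 3 are sound in outline, but Step~2 as written would not close, and that is precisely the part the paper itself flags as the technical core of \cite{Z16}.
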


	\section{Dispersive estimate} \label{sec:3}
	Let 
    \begin{equation*}
        u\left(t\right)=\frac{\sin \left(t\sqrt{m^{2}+G_{\theta}}\right)}{\sqrt{m^{2}+G_{\theta}}}\,\psi+\cos \left(t\sqrt{m^{2}+G_{\theta}}\right)\,\varphi
    \end{equation*}
be the solution of the equation \eqref{eq:DKG}.	
	Set
    \begin{equation*}
        \mathcal{S}(u(t)):=\left(
        \begin{matrix}
		g_1(E,t)\\
        g_2(E,t)
	  \end{matrix}
        \right)
    \end{equation*}
    and
    \begin{equation*}
        \mathcal{S}\varphi=\left(g_{1}^{\varphi},g^{\varphi}_{2}\right)^{T},\quad\mathcal{S}\psi=\left(g_{1}^{\psi},g_{2}^{\psi}\right)^{T}
    \end{equation*}
    are the modified spectral
transforms for initial data $\varphi$ and $\psi$ respectively.
	It follows from the  functional calculus that for a.e. $E\in\Sigma$, 
	$$
	\begin{aligned}
		\begin{pmatrix} g_1(E,t) \\ g_2(E,t) \end{pmatrix}&= \cos(t\Omega(E)) \begin{pmatrix} g_1^{\varphi}(E) \\ g_2^{\varphi}(E) \end{pmatrix} + \frac{\sin(t\Omega(E))}{\Omega(E)} \begin{pmatrix} g_1^{\psi}(E) \\ g_2^{\psi}(E) \end{pmatrix}\\
		&=\frac{e^{{\rm i} \Omega(E)t}}{2} \left[ \begin{pmatrix} g_1^{\varphi} \\ g_2^{\varphi} \end{pmatrix} + \frac{1}{{\rm i}\Omega(E)} \begin{pmatrix} g_1^{\psi} \\ g_2^{\psi} \end{pmatrix} \right]+ \frac{e^{-{\rm i} \Omega(E)t}}{2} \left[ \begin{pmatrix} g_1^{\varphi} \\ g_2^{\varphi} \end{pmatrix} - \frac{1}{{\rm i}\Omega(E)} \begin{pmatrix} g_1^{\psi} \\ g_2^{\psi} \end{pmatrix} \right],
	\end{aligned}
	$$
	where $\Omega(E) := \sqrt{2+m^2+E}$.

	By \eqref{infinito}, we have
	\begin{equation}\label{eq:u(t)_int}
		|u(n,t)|\leq\frac{1}{\pi}\left|\int_\Sigma \left(g_1(E,t){\cK}_n(E)+g_2(E,t){\cJ}_n(E)\right) \rho^{\prime}\d E\right|+\varepsilon_0^{\frac{\sigma^2}{10}}\|u(\cdot,t)\|_{\ell^\infty\left(\Z\right)} \ .
	\end{equation}
	To estimate $\|u(\cdot,t)\|_{\ell^\infty\left(\Z\right)}$, it suffices to control the integral term by taking $\varepsilon_{0}$ to be small. Substituting the expression of $\mathcal{S}(u(t))$ into the integral, and recalling the definitions of $g^{\varphi}$ and $g^{\psi}$, we obtain
	\begin{equation}\label{eq:appro_u}
		\begin{aligned}
			&\int_\Sigma \left(g_1(E,t){\cK}_n(E)+g_2(E,t){\cJ}_n(E)\right)\, \rho^{\prime}\d E\\
			=&\frac{1}{2} \int_\Sigma e^{{\rm i}\Omega(E)t} \sum_{k\in\Z} \left(\varphi(k) + \frac{\psi(k)}{{\rm i}\Omega(E)}\right) \left({\cK}_k(E){\cK}_n(E)+{\cJ}_k(E){\cJ}_n(E)\right)\, \rho^{\prime}\d E \\
			&+\frac{1}{2} \int_\Sigma e^{-{\rm i}\Omega(E)t} \sum_{k\in\Z} \left(\varphi(k) - \frac{\psi(k)}{{\rm i}\Omega(E)}\right) \left({\cK}_k(E){\cK}_n(E)+{\cJ}_k(E){\cJ}_n(E)\right)\, \rho^{\prime}\d E \ .
		\end{aligned}
	\end{equation}
	Note that the term $\left({\cK}_k(E){\cK}_n(E)+{\cJ}_k(E){\cJ}_n(E)\right)$ can be expanded as
	\begin{equation}\label{eq:K,J expansions}
		\sum_{k_{\Delta}, n_{\Delta}} \beta_{k,k_{\Delta}} \beta_{n,n_{\Delta}} \cos\left((k_{\Delta}-n_{\Delta})\rho(E)\right) \ .
	\end{equation}
	Since $m^{2}>0$, the factor $\frac{1}{\Omega(E)}= \frac{1}{\sqrt{2+m^2+E}}$ is smooth and bounded on the spectrum of $G_{\theta}+m^2$. By the symmetry of the complex conjugate terms in \eqref{eq:appro_u}, the dispersive decay estimates for both integrals depend on the oscillatory behavior of the phase function. Thus, the problem reduces to establishing the estimates for the oscillatory integral associated with the kernel
	$e^{-{\rm i} t \sqrt{2+m^2+E}}$. For simplicity, we denote
	$$\mathbb{I}(S;k_{\Delta},n_{\Delta},\rho):=\int_{S} \beta_{k,k_{\Delta}}\beta_{n,n_{\Delta}}\cos
	\big((k_{\Delta}-n_{\Delta})\rho\left(E\right)\big)e^{-\mathrm{i}t\Omega\left(E\right)}\cdot\rho^{\prime}\d E, \quad \text{for}\ S\subset\R,$$ 
	and
	$$\mathbb{I}^{J}(S;k_{\Delta},n_{\Delta},\rho):=\int_{S} \beta^J_{k,k_{\Delta}} \beta^J_{n,n_{\Delta}}\cos
	\big((k_{\Delta}-n_{\Delta})\rho\left(E\right)\big)e^{-\mathrm{i}t\Omega\left(E\right)}\cdot\rho^{\prime} \d E, \quad \text{for}\ S\subset\R.$$ 
	Whenever no confusion can arise, we simply write $\mathbb{I}(E)$ and $\mathbb{I}^{J}(E)$ respectively. 
	
	To prove Theorem \ref{thm:disper}, we need the following lemma.
	\begin{lemma}\label{lem:disper-log}
		Assume $m^{2}>0$ and $|V|_r=\varepsilon_0 \leq \varepsilon_{\ast\ast}$  is sufficiently small, where $\varepsilon_{\ast\ast}$ depends on $\varepsilon_{\ast}$ in Proposition \ref{propsana} and also on $m^{2}$. Then for any $t\in\R$ and any $k,k_{\Delta},n,n_{\Delta}$,
		\begin{equation*}
			\begin{aligned}
				\left|\mathbb{I}\left(\Sigma\right)\right|\leq\frac{C\left(m\right)\left|\ln\varepsilon_{0}\right|^{82000d\left(\ln\ln\left(2+\JP{t}\right)\right)^{2}}}{\JP{t}^{\frac{1}{3}}}.
			\end{aligned}
		\end{equation*}
		Here the constant $C\left(m\right)$ depends only on $m$.
	\end{lemma}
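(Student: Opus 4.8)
The plan is to estimate $\mathbb{I}(\Sigma)$ by first passing to a finite KAM stage $J$ chosen in terms of $t$, then decomposing the (smoothened) spectrum $[\inf\Sigma,\sup\Sigma]$ into the finitely many intervals $\Gamma^{(J)}_j$ supplied by Proposition \ref{propsana1}, and applying Van der Corput's lemma on each interval after changing variables from $E$ to the approximate rotation number $\rho_J$. The first step is to replace $\mathbb{I}(\Sigma)$ by $\mathbb{I}^{J}\!\bigl(\bigcup_{j=0}^{J}\Gamma^{(J)}_j\bigr)$: the difference is controlled using \eqref{error3} and \eqref{rhoj} (to replace $\beta$ by $\beta^J$ and $\rho$ by $\rho_J$ on $\widetilde\Sigma$), together with the measure estimates $|\Sigma\setminus\widetilde\Sigma|=0$ and $|\rho(\Gamma^{(J)}_{j+1})|\leq 3|\ln\varepsilon_j|^{2d}\varepsilon_j^\sigma$ from Propositions \ref{propsana} and \ref{propsana1} (to discard the high-index pieces $\Gamma^{(J)}_j$, $j\geq 1$, whose total $\rho$-measure is summably small, and to absorb the mismatch between $\Gamma^{(J)}_j$ and $\Sigma_j$ on the low-index pieces). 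All of these errors carry a factor $\varepsilon_J^{1/4}$ or $\varepsilon_J^{\sigma}$ times at most $|\ln\varepsilon_0|^{2J^2 d}$ intervals, so they are acceptable provided $\varepsilon_J$ beats $\JP{t}^{-1/3}$, which forces the relation between $J$ and $t$.

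Next, on each connected component $I$ of the \emph{regular} part $\{\Gamma^{(J)}_j : |\sin\xi_J|>\tfrac32\varepsilon_J^{1/20}\}$ (there are at most $2|\ln\varepsilon_0|^{2J^2d}$ of them by (S3)), I change variables via $E\mapsto \rho_J$. Since $\rho_J'=\xi_J'$ on the interior and $\Omega(E)=\sqrt{2+m^2+E}$ with $\Omega'=\tfrac{1}{2\Omega}$ bounded above and below on a fixed compact energy range, the phase becomes $t\,\Omega(E(\rho_J))$ whose second derivative in $\rho_J$ I need to bound below. Here is where (S3) does the work: on the low-index components $\Gamma^{(J)}_0$ we have $\xi_J'\approx 1$ and $|(\mathrm{tr}A_J)'|\approx 1$ from \eqref{sigma_m_0}, while on $\Gamma^{(J)}_{j+1}$ the estimates \eqref{esti_plat} give $\xi_J'>\tfrac13$ and a lower bound $|\xi_J''|>\varepsilon_j^{3\sigma/4}/(4|\sin\xi_J|^3)$. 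The composed phase $\Phi(\rho_J):=\Omega(E(\rho_J))$ has $\Phi''=\Omega''(E)(E')^2+\Omega'(E)E''$ with $E'=1/\xi_J'$, $E''=-\xi_J''/(\xi_J')^3$; since $\Omega''<0$ and $\Omega'>0$ are both of size $\sim 1$, and $\xi_J''<0$ on the relevant branch (from the sign of $(\mathrm{tr}A_J)'$ and $\sin\xi_J$), the two contributions have the same sign and cannot cancel, giving $|\Phi''|\gtrsim$ (the smaller of a constant and $\varepsilon_j^{3\sigma/4}|\sin\xi_J|^{-3}(\xi_J')^{-3}$). On the pieces where $\Phi''$ is merely bounded below by a negative power of $\varepsilon_j$ we instead use that $\Phi'$ itself is comparable to $1/\xi_J'$ which is $\gtrsim |\sin\xi_J|/N_j^{10\tau}$, i.e. we have nondegenerate \emph{first} derivative there, so a first-derivative (non-stationary phase) Van der Corput bound $\lesssim 1/(t|\Phi'|)$ applies. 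Summing the second-derivative bound $\lesssim (t|\Phi''|)^{-1/2}$ on the truly stationary branches, the worst single interval contributes $\lesssim \JP{t}^{-1/2}\cdot(\text{a small power of }\varepsilon_j^{-1})$, which is better than $\JP{t}^{-1/3}$ once $j\leq J$ and $\varepsilon_J$ is chosen $\gtrsim\JP{t}^{-\delta}$ for small $\delta$; multiplying by the number $\lesssim |\ln\varepsilon_0|^{2J^2d}$ of intervals and by the $\mathcal C^1$ bounds on the amplitude $\beta^J_{k,k_\Delta}\beta^J_{n,n_\Delta}\cos((k_\Delta-n_\Delta)\rho_J)\rho_J'$ from \eqref{esti_beta_0}, \eqref{esti_beta_j+10} and \eqref{esti_plat} gives the stated bound.

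The genuinely cubic singularity — the $\JP{t}^{-1/3}$ rate rather than $\JP{t}^{-1/2}$ — comes from the neighborhoods of the band edges $\inf\Sigma,\sup\Sigma$ and of the internal gap edges, i.e. from the \emph{singular} set $\{|\sin\xi_J|\leq\tfrac32\varepsilon_J^{1/20}\}$ together with the transition to it: there $\sin\xi_J\to 0$, so $\Phi''\to 0$ and one must go to the third derivative. By (S4) the $\rho$-measure of the fully singular set is $\leq\varepsilon_J^{1/24}$, so the trivial bound on that piece is already $O(\varepsilon_J^{1/24})\ll\JP{t}^{-1/3}$; the remaining work is a dyadic decomposition in $|\sin\xi_J|\sim 2^{-\ell}$ over the range $\varepsilon_J^{1/20}\lesssim 2^{-\ell}\lesssim 1$, where on each dyadic shell one optimizes between the second-derivative bound $(t\,2^{3\ell}\,\varepsilon_j^{-3\sigma/4})^{-1/2}$ wait — more precisely between $(t|\Phi''|)^{-1/2}\sim (t\,2^{-3\ell}\cdot 2^{3\ell}\cdots)$; the point is that near the edge $\Phi$ behaves like a generic fold in $E$ but the map $E\mapsto\rho_J$ itself degenerates, producing the cubic phase in $\rho_J$, and Van der Corput with $k=3$ gives $(t)^{-1/3}$ up to the interval count. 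Summing the geometric-type series in $\ell$ (which converges, losing only the factor counting the dyadic shells, $\sim\log(1/\varepsilon_J)\sim\log\JP{t}$ per interval) and over the $\lesssim|\ln\varepsilon_0|^{2J^2d}$ intervals, and finally choosing $J$ so that $J\sim \ln\ln(2+\JP{t})$ (this makes $\varepsilon_J\sim \varepsilon_0^{(1+\sigma)^J}$ comparable to a fixed negative power of $\JP{t}$ so that $|\ln\varepsilon_0|^{2J^2d}\sim |\ln\varepsilon_0|^{c(\ln\ln\JP{t})^2}$), yields exactly the claimed $C(m)|\ln\varepsilon_0|^{82000d(\ln\ln(2+\JP{t}))^2}\JP{t}^{-1/3}$. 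The main obstacle is the edge analysis: verifying that at the band and gap edges the composed phase $\rho_J\mapsto\Omega(E(\rho_J))$ has a nonvanishing third derivative (equivalently, that the square-root-type degeneracy of $E$ as a function of $\rho_J$ near $\sin\xi_J=0$ combines with $\Omega'\neq 0$ to give a clean cubic), uniformly in $j\leq J$ and with the degeneracy rates quantified by \eqref{esti_plat}, so that Van der Corput's lemma applies with constants that are only polynomially large in $\varepsilon_j^{-1}$ and hence absorbable by the choice of $J$.
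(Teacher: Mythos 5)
Your overall skeleton (approximate by a finite KAM stage $J$, pass to the variable $\rho_J$, apply Van der Corput on the $\lesssim|\ln\varepsilon_0|^{2J^2d}$ components, and choose $J\sim\ln\ln\langle t\rangle$) matches the paper. However, the technical heart — the oscillatory integral — has several genuine gaps.

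First and most seriously, you treat $\cos\bigl((k_\Delta-n_\Delta)\rho_J\bigr)$ as part of the amplitude. Its $\mathcal C^1$ norm grows like $|k_\Delta-n_\Delta|$, which is unbounded (by \eqref{K_and_J}, $|k_\Delta-n_\Delta|\approx|k-n|$), so the Van der Corput constant you would pay is unbounded; your stated bound cannot be uniform in $k,n$. The paper avoids this by splitting $\cos$ into exponentials and absorbing $(k_\Delta-n_\Delta)\rho_J$ into the phase $\mathcal H_t(\rho_J)=t\,\Omega(E(\rho_J))+(k_\Delta-n_\Delta)\rho_J$ — a linear term that leaves $\mathcal H_t''$ and $\mathcal H_t'''$ unchanged — and by a separate integration-by-parts lemma (Lemma~\ref{lem:large}) that handles $|k_\Delta-n_\Delta|\gtrsim\langle t\rangle^{4/3}$. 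The final proof then takes a $\min$ of the two estimates. This two-regime structure is entirely absent from your write-up, and without it the claim is false for $|k_\Delta-n_\Delta|$ large. Relatedly, your proposal to use a first-derivative (nonstationary-phase) bound ``$\lesssim 1/(t|\Phi'|)$'' is not available: the linear term $(k_\Delta-n_\Delta)\rho_J$ means $\mathcal H_t'$ can vanish anywhere, and the paper accordingly never uses the first derivative.

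Second, your account of where the cubic rate comes from is wrong. You locate it at the band/gap edges, but those edges are exactly the singular set $\{|\sin\xi_J|\lesssim\varepsilon_J^{1/20}\}$, which the paper simply discards using the $\rho$-measure bound $\lesssim\varepsilon_J^{1/24}$ from (S4) (that is Lemma~\ref{lem:S1}; no dyadic decomposition in $|\sin\xi_J|$ is used or needed). The $\langle t\rangle^{-1/3}$ rate instead arises on the regular set $S_2\cap\Gamma_0^{(J)}$, from an \emph{interior} inflection of the composed phase, i.e.\ the energy at which $\widetilde\Omega''(\rho)=0$ for the free phase $\widetilde\Omega(\rho)=\sqrt{2+m^2-2\cos\rho}$. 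Moreover, your argument that $\Phi''$ has a fixed sign (``the two contributions have the same sign and cannot cancel'') is neither correct nor what the paper does: the paper shows that $\mathcal H_t''$ \emph{can} vanish but that $|\mathcal H_t''|+|\mathcal H_t'''|\gtrsim C_0(m)\,t$ on $\Gamma_0^{(J)}$, then splits $\rho_J\mathcal L_\kappa$ into the region $|\sin\rho_J|\geq$~const (third-derivative Van der Corput, rate $t^{-1/3}$) and its complement (second-derivative, rate $t^{-1/2}$). Finally, on the high-index components $\Gamma_{j+1}^{(J)}$ the second-derivative lower bound degrades to $\gtrsim t\,\varepsilon_j^{13\sigma/16}$; the proof needs the amplitude smallness $|\beta_{k,k_\Delta}^J\beta_{n,n_\Delta}^J|\lesssim\varepsilon_j^{6\sigma}$ from \eqref{esti_beta_j+10} to compensate, a point your proposal omits.
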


	\begin{proof}[Proof of Theorem \ref{thm:disper} assuming Lemma \ref{lem:disper-log}]
		By the expansions \eqref{eq:K,J expansions}, we apply Lemma \ref{lem:disper-log} to equation \eqref{eq:appro_u} and derive
		\begin{equation}\label{eq:L infty estimate}
			\begin{aligned}
				&\left|\int_\Sigma \left(g_1(E,t){\cK}_n(E)+g_2(E,t){\cJ}_n(E)\right)\, \rho^{\prime}\d E\right|\\
				\leq&C\left(m\right)\cdot\frac{\left|\ln\varepsilon_{0}\right|^{82000\left(\ln\ln\left(2+\JP{t}\right)\right)^{2}d}}{\JP{t}^{\frac{1}{3}}}\left(\sum_{k\in\Z}\left|\varphi\left(k\right)\right|+\frac{1}{\inf_{E\in\Sigma}\left|\Omega\left(E\right)\right|}\sum_{k\in\Z}\left|\psi\left(k\right)\right|\right),
			\end{aligned}
		\end{equation}
		where the absolute constant $C_{1}$ relates to the estimate of $e^{{\rm i}\Omega\left(\lambda\right)t}$ and so does $C_{2}$ for the estimate related to $e^{-{\rm i}\Omega\left(\lambda\right)t}$. Since $\varepsilon_{0}<1$ is small, and by \eqref{eq:u(t)_int}, \eqref{eq:L infty estimate} we deduce
		\begin{equation*}
			\norm{u\left(t\right)}_{\ell^{\infty}\left(\Z\right)}\leq\frac{C\left(m\right)}{1-\varepsilon_0^{\frac{\sigma^2}{10}}}\frac{\left|\ln\varepsilon_{0}\right|^{82000\left(\ln\ln\left(2+\JP{t}\right)\right)^{2}d}}{\JP{t}^{\frac{1}{3}}}\left(\norm{\varphi}_{\ell^{1}\left(\Z\right)}+\frac{1}{\inf_{E\in\Sigma}\left|\Omega\left(E\right)\right|}\norm{\psi}_{\ell^{1}\left(\Z\right)}\right).
		\end{equation*}
		This completes the proof.
	\end{proof}
	
We now begin the proof of Lemma~\ref{lem:disper-log}.  
First, we state the following lemma, which asserts that $\mathbb{I}(\Sigma)$ can be approximated by $\mathbb{I}^{J}(\Sigma)$.

\begin{lemma}\label{lem:approx 1}
For any $J\geq 1$, we have
  \begin{equation*}
		\left|\mathbb{I}\left(\Sigma\right)-\mathbb{I}^{J}(\Sigma)\right|\leq32\varepsilon_{J}^{\frac{3\sigma}{4}}.
  \end{equation*}  
\end{lemma}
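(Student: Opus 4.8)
The plan is to compare the integrands of $\mathbb{I}(\Sigma)$ and $\mathbb{I}^{J}(\Sigma)$ on the decomposition $\Sigma = \widetilde\Sigma \cup (\Sigma\setminus\widetilde\Sigma)$ furnished by Proposition \ref{propsana}, and further split $\widetilde\Sigma = \bigcup_{j\geq 0}\Sigma_j$. On the full-measure set $\widetilde\Sigma$ the integrand of $\mathbb{I}(\Sigma)$ is, up to the bounded unimodular factors $\cos\big((k_\Delta-n_\Delta)\rho\big)e^{-\mathrm{i}t\Omega}$ and the weight $\rho'\,\d E$, the product $\beta_{k,k_\Delta}\beta_{n,n_\Delta}$, while the integrand of $\mathbb{I}^{J}(\Sigma)$ is the same thing with $\beta^{J}_{k,k_\Delta}\beta^{J}_{n,n_\Delta}$. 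First I would write $\beta_{k,k_\Delta}\beta_{n,n_\Delta}-\beta^J_{k,k_\Delta}\beta^J_{n,n_\Delta} = (\beta_{k,k_\Delta}-\beta^J_{k,k_\Delta})\beta_{n,n_\Delta}+\beta^J_{k,k_\Delta}(\beta_{n,n_\Delta}-\beta^J_{n,n_\Delta})$, so each difference is controlled on $\Sigma_j$ by \eqref{error3}, namely $|\beta_{n,n_\Delta}-\beta^J_{n,n_\Delta}|_{\Sigma_j}\leq 10\varepsilon_J^{1/4}$, multiplied by an $L^\infty$ bound on the surviving $\beta$ or $\beta^J$ factor coming from \eqref{eq:beta on SJ} and \eqref{esti_beta_0}--\eqref{esti_beta_j+10}.

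The second step is to handle the measure. Since $\d\varphi\big|_{\R\setminus\Sigma}=0$ and, on $\Sigma$, the relevant weight is $\tfrac1\pi\rho'\,\d E$, the total mass of the weight on $\Sigma_j$ is $\tfrac1\pi|\rho(\Sigma_j)|$, which by Proposition \ref{propsana} satisfies $|\rho(\Sigma_{j+1})|\leq 3|\ln\varepsilon_j|^{2d}\varepsilon_j^\sigma$ for $j\geq 0$, while $|\rho(\Sigma_0)|\leq\pi$. Thus the contribution to $|\mathbb{I}(\Sigma)-\mathbb{I}^J(\Sigma)|$ from $\Sigma_0$ is bounded by a constant times $\varepsilon_J^{1/4}$ (using that both $\beta$ and $\beta^J$ are $O(1)$ on $\Sigma_0$ by \eqref{eq:beta on SJ}, \eqref{esti_beta_0}), and the contribution from each $\Sigma_{j+1}$ is bounded by $10\varepsilon_J^{1/4}$ times the relevant $\beta$-bound (of size $\varepsilon_j^\sigma$, resp. $\varepsilon_j^{3\sigma}$) times $\tfrac1\pi\cdot 3|\ln\varepsilon_j|^{2d}\varepsilon_j^\sigma$. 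Summing the geometric-type series $\sum_j |\ln\varepsilon_j|^{2d}\varepsilon_j^{\sigma}$ — which converges rapidly because $\varepsilon_{j+1}=\varepsilon_j^{1+\sigma}$ forces super-exponential decay — gives a bound of the form $C\varepsilon_J^{1/4}$ times an absolutely convergent constant; absorbing the convergent sum and the $|\ln|$ factors and being generous with the exponent yields the claimed $32\varepsilon_J^{3\sigma/4}$ (note $3\sigma/4 < 1/4$ since $\sigma=1/200$, so $\varepsilon_J^{1/4}\leq\varepsilon_J^{3\sigma/4}$, which is why one can afford to state the weaker exponent). Finally, the set $\Sigma\setminus\widetilde\Sigma$ has $|\Sigma\setminus\widetilde\Sigma|=0$ and carries zero $\rho'\,\d E$-weight (or one notes $\rho$ is constant off $\widetilde\Sigma$ on each gap, so its contribution vanishes), hence contributes nothing.

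The only mildly delicate point — and the step I would be most careful about — is bookkeeping the $L^\infty$ bounds on the $\beta$ and $\beta^J$ factors on the dyadic pieces $\Sigma_{j+1}$ versus $\Gamma^{(J)}_{j+1}$: \eqref{esti_beta_j+10} bounds $\beta^J$ on $\Gamma^{(J)}_{j+1}$, and since $\Sigma_{j+1}\subset\Gamma^{(J)}_{j+1}$ this restricts to $\Sigma_{j+1}$, while \eqref{eq:beta on SJ} bounds $\beta$ on $\Sigma_{j+1}$; one must also make sure the decomposition of $\mathbb{I}^J(\Sigma)$ really does reduce to integrals over $\Sigma_j\subset\Gamma^{(J)}_j$ after discarding the null set, which is legitimate because $\beta^J$ is only needed on $\widetilde\Sigma$ for this comparison and the $\rho'\,\d E$ weight is supported there. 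Everything else is a routine triangle-inequality-plus-geometric-series estimate, and the final constant $32$ is deliberately wasteful to absorb all the $\pi$'s, the factor $10$ from \eqref{error3}, the two cross terms, and the tail sum.
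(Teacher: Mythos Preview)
Your overall strategy is right, but there is a genuine gap in how you treat the tail $\Sigma_j$ with $j>J$. You invoke \eqref{error3} uniformly on every $\Sigma_j$, but that estimate is stated \emph{only} for $0\le j\le J$; for $j>J$ there is no $\Gamma^{(J)}_j$ in which $\Sigma_j$ sits, and no reason supplied by Proposition~\ref{prop:MST} why $|\beta-\beta^J|_{\Sigma_j}$ should be $O(\varepsilon_J^{1/4})$. Consequently your claim that the whole sum is $O(\varepsilon_J^{1/4})$, later weakened to $32\varepsilon_J^{3\sigma/4}$, is not justified as written.

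The paper's proof handles this by splitting the sum at $j=J$. For $0\le j\le J$ it argues exactly as you do, obtaining a bound of order $\varepsilon_0^\sigma\varepsilon_J^{1/4}$. For $j\ge J+1$ it abandons \eqref{error3} entirely and instead uses the crude $L^\infty$ bound
\[
\bigl|\beta_{k,k_\Delta}\beta_{n,n_\Delta}-\beta^J_{k,k_\Delta}\beta^J_{n,n_\Delta}\bigr|_{\Sigma_j}\le 10
\]
(coming from \eqref{eq:beta on SJ} and the sup of \eqref{esti_beta_0}--\eqref{esti_beta_j+10} over all $\Gamma^{(J)}_{j'}$, $0\le j'\le J$), and relies on the smallness of the \emph{measure}: $\sum_{j\ge J+1}|\rho(\Sigma_j)|\le \pi\varepsilon_J^{3\sigma/4}$ by Proposition~\ref{propsana}. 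It is precisely this tail term that forces the final exponent $\tfrac{3\sigma}{4}$ rather than $\tfrac14$; your remark that ``one can afford to state the weaker exponent'' gets the causality backwards. Once you insert this split at $j=J$, your argument goes through.
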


\begin{proof}
    
We divide the summation into two parts:
$$
\left|\mathbb{I}\left(\Sigma\right)-\mathbb{I}^{J}(\Sigma)\right|\leq\sum_{j=0}^{J}\left|\mathbb{I}\left(\Sigma_{j}\right)-\mathbb{I}^{J}\left(\Sigma_{j}\right)\right|+\sum_{j=J+1}^{\infty}\left|\mathbb{I}\left(\Sigma_{j}\right)-\mathbb{I}^{J}\left(\Sigma_{j}\right)\right|.
$$

By Proposition \ref{prop:MST}, we obtain
		\begin{align}\label{eq:es-beta 1}
			\notag   \left|\beta_{k,k_{\Delta}}\beta_{n,n_{\Delta}}-\beta^{J}_{k,k_{\Delta}}\beta^{J}_{n,n_{\Delta}}\right|_{\Sigma_{j}}\leq&\left|\beta_{k,k_{\Delta}}\left(\beta_{n,n_{\Delta}}-\beta^{J}_{n,n_{\Delta}}\right)\right|_{\Sigma_{j}}+\left|\beta^{J}_{n,n_{\Delta}}\left(\beta_{k,k_{\Delta}}-\beta^{J}_{k,k_{\Delta}}\right)\right|_{\Sigma_{j}}\\ \notag 
			\leq&10\varepsilon_{J}^{\frac{1}{4}}\left(\left(\delta_{n,n_{\Delta}}+\delta_{k,k_{\Delta}}\right)\cdot\varepsilon_{0}^{\frac{1}{4}}+\varepsilon_{0}^{\sigma}+\varepsilon_{0}^{3\sigma}\right)\\ 
			\leq&11\varepsilon^{\sigma}_{0}\varepsilon^{\frac{1}{4}}_{J}\quad{\rm for}\;0\leq j\leq J.
		\end{align}
	Here and below, we often use $C \varepsilon_j^{\lambda_1} \leq \varepsilon_j^{\lambda_2}$ for $\lambda_1>\lambda_2$.	It follows from Proposition \ref{propsana1} that  
		\begin{equation}\label{eq:rho-SigmaJ}
			\left|\rho\left(\Sigma_{j+1}\right)\right|\leq\left|\rho\left(\Gamma^{\left(J\right)}_{j+1}\right)\right|\leq3|\ln\varepsilon_j|^{2 d}\varepsilon_{j}^{\sigma},\quad{\rm for\;all}\;0\leq j\leq J-1.
		\end{equation}
		Combining \eqref{eq:es-beta 1} and \eqref{eq:rho-SigmaJ}, we derive 
		\begin{equation*}
			\begin{aligned}
				&\sum_{j=0}^{J}\left|\mathbb{I}\left(\Sigma_{j}\right)-\mathbb{I}^{J}\left(\Sigma_{j}\right)\right|\\
				=&\sum_{j=0}^{J}\left|\int_{\Sigma_{j}}\left(\beta_{k,k_{\Delta}}\beta_{n,n_{\Delta}}-\beta^{J}_{k,k_{\Delta}}\beta^{J}_{n,n_{\Delta}}\right)\cos\big((k_{\Delta}-n_{\Delta})\rho\left(E\right)\big)e^{-{\rm i}t\Omega\left(E\right)}\cdot\rho^{\prime}\d E\right|\\
				\leq&11\varepsilon^{\sigma}_{0}\varepsilon^{\frac{1}{4}}_{J}\sum_{j=0}^{J}\int_{\Sigma_{j}}\rho^{\prime}\d E\quad\left({\rm noting\;that\;}\left|\cos\big((k_{\Delta}-n_{\Delta})\rho\left(E\right)\big)e^{-{\rm i}t\Omega\left(E\right)}\right|\leq 1\right)\\
				\leq&11\varepsilon^{\sigma}_{0}\varepsilon^{\frac{1}{4}}_{J}\left(\pi+\sum_{j=0}^{J-1}3|\ln\varepsilon_j|^{2 d}\varepsilon_{j}^{\sigma}\right)\\
				\leq&11\pi\varepsilon^{\sigma}_{0}\varepsilon^{\frac{1}{4}}_{J}\left(1+\varepsilon_{0}^{\frac{3\sigma}{4}}\right).
			\end{aligned}
		\end{equation*}
The last inequality follows from the geometric-series bound
\[
\sum_{j=0}^{\infty}3|\ln\varepsilon_j|^{2 d}\varepsilon_{j}^{\sigma} 
\le \frac{3 |\ln \varepsilon_0|^{2d} \varepsilon_0^\sigma}{1 - (1+\sigma)^{2d} \varepsilon_0^{\sigma^2}},
\]
provided $\varepsilon_0$ is chosen sufficiently small so that
\[
\sum_{j=0}^{\infty}3|\ln\varepsilon_j|^{2 d}\varepsilon_{j}^{\sigma}
\le\pi\varepsilon_{0}^{\frac{3\sigma}{4}}.
\]

		For any $j\geq J+1$, we apply the $L^{\infty}$-estimate of $\beta^{J}_{k,k_{\Delta}}\beta^{J}_{n,n_{\Delta}}$ on $\Sigma_{j}$ and Proposition \ref{prop:MST}, 
		\begin{equation}\label{eq:es-beta 2}
			\begin{aligned}
				\left|\beta_{k,k_{\Delta}}\beta_{n,n_{\Delta}}-\beta^{J}_{k,k_{\Delta}}\beta^{J}_{n,n_{\Delta}}\right|_{\Sigma_{j}}\leq\varepsilon_{J}^{2\sigma}+\left(\delta_{k,k_{\Delta}}+\varepsilon_{0}^{\frac{1}{4}}+\varepsilon_{0}^{3\sigma}\right)\left(\delta_{n,n_{\Delta}}+\varepsilon_{0}^{\frac{1}{4}}+\varepsilon_{0}^{3\sigma}\right)\leq 10.
			\end{aligned}
		\end{equation}
		
		We calculate that
		\begin{equation*}
			\begin{aligned}
				&\sum_{j=J+1}^{\infty}\left|\mathbb{I}\left(\Sigma_{j}\right)-\mathbb{I}^{J}\left(\Sigma_{j}\right)\right|\\
				=&\sum_{j=J+1}^{\infty}\left|\int_{\Sigma_{j}}\left(\beta_{k,k_{\Delta}}\beta_{n,n_{\Delta}}-\beta^{J}_{k,k_{\Delta}}\beta^{J}_{n,n_{\Delta}}\right)\cos\big((k_{\Delta}-n_{\Delta})\rho\left(E\right)\big)e^{-{\rm i}t\Omega\left(E\right)}\cdot\rho^{\prime}\d E\right|\\
				\leq&10\sum_{j=J+1}^{\infty}\int_{\Sigma_{j}}\rho^{\prime}\d E\\
                \leq&10\sum_{j=J+1}^{\infty}\left|\rho\left(\Sigma_{j}\right)\right|\qquad\left({\rm by\;inequality\; \eqref{eq:rho-SigmaJ}}\right)\\
				\leq&10\pi\varepsilon_{J}^{\frac{3\sigma}{4}}.\\
			\end{aligned}
		\end{equation*}
        Combining all error terms, 
        \begin{equation*}
            \left|\mathbb{I}\left(\Sigma\right)-\mathbb{I}^{J}(\Sigma)\right|\leq11\pi\varepsilon^{\sigma}_{0}\varepsilon^{\frac{1}{4}}_{J}\left(1+\varepsilon_{0}^{\frac{3\sigma}{4}}\right)+10\pi\varepsilon_{J}^{\frac{3\sigma}{4}}\leq32\varepsilon_{J}^{\frac{3\sigma}{4}}.
        \end{equation*}
        This completes the proof.	
\end{proof}

For simplicity, we denote $\Theta_1:=\sup\Sigma-\inf\Sigma$ and $\Theta_2:=\inf_{E \in \Sigma}\Omega\left(E\right)$.

In the case of $|k_{\Delta}-n_{\Delta}|$ is large, we derive the following bound.      
\begin{lemma}\label{lem:large}
For any $t \in \R$, $k_{\Delta}-n_{\Delta}\neq 0$, the following estimate holds, 
\begin{equation*}
    \left|\mathbb{I}^{J}(\Sigma)\right|\leq\frac{18}{\left|k_{\Delta}-n_{\Delta}\right|}\left|\ln \varepsilon_{0}\right|^{2J^{2}d}+\frac{3\Theta_{1}\JP{t}}{\left|k_{\Delta}-n_{\Delta}\right|}\left(\left(1+\varepsilon_{0}^{\frac{1}{4}}\right)^{2}+\varepsilon_{J}^{6\sigma}\right)\left(1+\frac{1}{\Theta_{2}}\right).\\
\end{equation*}		
\end{lemma}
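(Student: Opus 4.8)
The plan is to extract one power of $|k_{\Delta}-n_{\Delta}|^{-1}$ by a single integration by parts in $E$. The key observation is that, since the rotation number $\rho$ is globally continuous and monotone,
\[
\cos\big((k_{\Delta}-n_{\Delta})\rho(E)\big)\,\rho'(E)\,\d E
=\frac{1}{k_{\Delta}-n_{\Delta}}\,\d\!\Big[\sin\big((k_{\Delta}-n_{\Delta})\rho(E)\big)\Big]
\]
is an exact differential, so that one factor $|k_{\Delta}-n_{\Delta}|^{-1}$ is free once we integrate by parts against the smooth factor $\beta^{J}_{k,k_{\Delta}}\beta^{J}_{n,n_{\Delta}}e^{-{\rm i}t\Omega}$. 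Since $\beta^{J}_{k,k_{\Delta}},\beta^{J}_{n,n_{\Delta}}$ are smooth only on the connected components of $\bigcup_{j=0}^{J}\Gamma^{(J)}_j$ (see Propositions~\ref{propsana1} and~\ref{prop:MST}), whereas $\mathbb{I}^{J}(\Sigma)$ integrates over the Cantor set $\Sigma$, I would first write $\mathbb{I}^{J}(\Sigma)=\sum_{j\ge0}\mathbb{I}^{J}(\Sigma_j)$ and replace it by $\mathbb{I}^{J}\big(\bigcup_{j=0}^{J}\Gamma^{(J)}_j\big)=\sum_{j=0}^{J}\mathbb{I}^{J}(\Gamma^{(J)}_j)$. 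The two discarded pieces, $\bigcup_{j\le J}(\Gamma^{(J)}_j\setminus\Sigma_j)$ and $\bigcup_{j>J}\Sigma_j$, carry no oscillatory gain but are negligible: there $|\cos(\cdots)e^{-{\rm i}t\Omega}|\le1$ and $\rho'\equiv0$ off $\Sigma$, so their total contribution is at most $\big(\sup|\beta^{J}_{k,k_{\Delta}}\beta^{J}_{n,n_{\Delta}}|\big)\big(\sum_{j\le J}|\rho(\Gamma^{(J)}_j\setminus\Sigma_j)|+\sum_{j>J}|\rho(\Sigma_j)|\big)$, which is super-exponentially small in $J$ by Proposition~\ref{propsana1}, exactly as in the proof of Lemma~\ref{lem:approx 1}; I would fold this error into the first term of the stated bound.

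On a connected component $[a,b]$ of some $\Gamma^{(J)}_j$, integration by parts is legitimate because $\beta^{J}_{k,k_{\Delta}}\beta^{J}_{n,n_{\Delta}}e^{-{\rm i}t\Omega}\in\cC^{1}([a,b])$ and $\rho$ is continuous and of bounded variation there, and it gives
\[
\int_a^b\beta^{J}_{k,k_{\Delta}}\beta^{J}_{n,n_{\Delta}}e^{-{\rm i}t\Omega}\,\d\big[\sin((k_{\Delta}-n_{\Delta})\rho)\big]
=\Big[\beta^{J}_{k,k_{\Delta}}\beta^{J}_{n,n_{\Delta}}e^{-{\rm i}t\Omega}\sin((k_{\Delta}-n_{\Delta})\rho)\Big]_a^b
-\int_a^b\big(\beta^{J}_{k,k_{\Delta}}\beta^{J}_{n,n_{\Delta}}e^{-{\rm i}t\Omega}\big)'\sin((k_{\Delta}-n_{\Delta})\rho)\,\d E.
\]
Since $\bigcup_{j=0}^{J}\Gamma^{(J)}_j$ has at most $|\ln\varepsilon_0|^{2J^2 d}$ components (Proposition~\ref{propsana1}) and $|\sin|\le1$, the boundary terms sum to at most $\tfrac{2}{|k_{\Delta}-n_{\Delta}|}|\ln\varepsilon_0|^{2J^2 d}\sup|\beta^{J}_{k,k_{\Delta}}\beta^{J}_{n,n_{\Delta}}|$; using $|\beta^{J}_{\cdot}|\le1+\varepsilon_0^{1/4}$ on $\Gamma^{(J)}_0$ by~\eqref{esti_beta_0} and $|\beta^{J}_{\cdot}|\le\varepsilon_j^{3\sigma}\le1$ on $\Gamma^{(J)}_{j+1}$ by~\eqref{esti_beta_j+10}, together with the absorbed error, this is $\le\tfrac{18}{|k_{\Delta}-n_{\Delta}|}|\ln\varepsilon_0|^{2J^2 d}$. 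For the interior term I would expand
\[
\big(\beta^{J}_{k,k_{\Delta}}\beta^{J}_{n,n_{\Delta}}e^{-{\rm i}t\Omega}\big)'
=\big((\beta^{J}_{k,k_{\Delta}})'\beta^{J}_{n,n_{\Delta}}+\beta^{J}_{k,k_{\Delta}}(\beta^{J}_{n,n_{\Delta}})'\big)e^{-{\rm i}t\Omega}-{\rm i}t\,\Omega'\,\beta^{J}_{k,k_{\Delta}}\beta^{J}_{n,n_{\Delta}}e^{-{\rm i}t\Omega},
\]
bound the $\beta'$-part pointwise by $(1+\varepsilon_0^{1/4})^2+\varepsilon_j^{6\sigma}$ via the $\cC^{1}$-estimates of Proposition~\ref{prop:MST} ($|(\beta^{J}_{\cdot})'|\le\varepsilon_0^{1/4}$ on $\Gamma^{(J)}_0$, $\le\varepsilon_j^{3\sigma}$ on $\Gamma^{(J)}_{j+1}$), and use $|\Omega'|=\tfrac{1}{2\Omega}\le\tfrac{1}{2\Theta_2}$ for the $t$-part. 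Since the components of $\bigcup_{j=0}^{J}\Gamma^{(J)}_j$ are disjoint subsets of $[\inf\Sigma,\sup\Sigma]$, their total length is $\le\Theta_1$, so the interior contribution is bounded by
\[
\frac{\Theta_1\big((1+\varepsilon_0^{1/4})^2+\varepsilon_j^{6\sigma}\big)}{|k_{\Delta}-n_{\Delta}|}\Big(1+\frac{|t|}{2\Theta_2}\Big)
\le\frac{3\Theta_1\JP{t}}{|k_{\Delta}-n_{\Delta}|}\big((1+\varepsilon_0^{1/4})^2+\varepsilon_j^{6\sigma}\big)\Big(1+\frac{1}{\Theta_2}\Big),
\]
using $\JP{t}\ge\max(1,|t|)$. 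Adding the boundary and interior contributions yields the claimed estimate.

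The only genuine analytic input is the single integration by parts displayed above; everything else is bookkeeping, and that is where I expect the main difficulty. Concretely, one must make precise the identification of $\cos((k_{\Delta}-n_{\Delta})\rho)\,\rho'\,\d E$ with $\tfrac{1}{k_{\Delta}-n_{\Delta}}\d[\sin((k_{\Delta}-n_{\Delta})\rho)]$ as Lebesgue--Stieltjes objects, so that the flat parts of $\rho$ over the spectral gaps contribute to neither side, and, more importantly, one has to check that passing from the fractal domain $\Sigma$ to the finite union $\bigcup_{j=0}^{J}\Gamma^{(J)}_j$ costs only an error that the first term can absorb — i.e., controlling $\sum_{j\le J}|\rho(\Gamma^{(J)}_j\setminus\Sigma_j)|$ and $\sum_{j>J}|\rho(\Sigma_j)|$ through the measure estimates of Proposition~\ref{propsana1}, in the spirit of the proof of Lemma~\ref{lem:approx 1}.
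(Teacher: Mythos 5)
Your proposal is essentially the paper's argument: pass from the Cantor set $\Sigma$ to the finite union of open intervals $\bigcup_{j=0}^{J}\Gamma^{(J)}_j$, exploit the exact differential $\cos((k_{\Delta}-n_{\Delta})\rho)\,\rho'\,\d E=\tfrac{1}{k_{\Delta}-n_{\Delta}}\,\d[\sin((k_{\Delta}-n_{\Delta})\rho)]$, integrate by parts on each of the at most $|\ln\varepsilon_0|^{2J^2d}$ connected components, and bound the boundary terms using the number of components and the interior term using the $\cC^1$ bounds on $\beta^J$ together with $|\Omega'|\le\tfrac{1}{2\Theta_2}$ and total length $\le\Theta_1$. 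The numerics ($18$ from the boundary pieces, $3\Theta_1\JP{t}(\cdots)(1+\Theta_2^{-1})$ from the interior pieces) come out the same.

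The one place where you work harder than necessary, and where a literal implementation of your plan would be awkward, is the passage from $\Sigma$ to $\bigcup_{j=0}^J\Gamma^{(J)}_j$. You treat this as an approximation with a ``super-exponentially small in $J$'' error and then ``fold this error into the first term.'' But the first term carries a prefactor $|k_{\Delta}-n_{\Delta}|^{-1}$, while your estimated error $\big(\sup|\beta^J|\big)\big(\sum_{j\le J}|\rho(\Gamma^{(J)}_j\setminus\Sigma_j)|+\sum_{j>J}|\rho(\Sigma_j)|\big)$ does not; since $|k_{\Delta}-n_{\Delta}|$ can be arbitrarily large (it ranges over $|k-n|+O(1)$ with $k,n\in\Z$), you cannot absorb an additive $J$-dependent constant into a term that vanishes as $|k_{\Delta}-n_{\Delta}|\to\infty$. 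Fortunately the error is in fact exactly zero and no absorption is needed: since $\rho$ is constant on each spectral gap, $\rho'\,\d E$ charges only $\Sigma$, and the complement of $\bigcup_{j=0}^J\Gamma^{(J)}_j$ in $[\inf\Sigma,\sup\Sigma]$ is finite (hence $\rho'\,\d E$-null). Equivalently, the two ``discarded pieces'' you identify cancel identically: $\bigcup_{j\le J}(\Gamma^{(J)}_j\setminus\Sigma_j)$ meets $\Sigma$ precisely in $\bigcup_{l>J}\Sigma_l$ (up to null sets), so $\sum_{j\le J}\mathbb{I}^J(\Gamma^{(J)}_j\setminus\Sigma_j)=\sum_{l>J}\mathbb{I}^J(\Sigma_l)$. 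This is the observation the paper makes tacitly when it writes $\mathbb{I}^J(\Sigma)=\int_{\inf\Sigma}^{\sup\Sigma}\cdots\rho'\,\d E$ outright. With this identity in hand, your integration by parts and the subsequent bookkeeping reproduce the stated bound.
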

\begin{proof}
By Proposition~\ref{propsana1}, for each $0\leq j\leq J$ we can write
\begin{equation*}
  \Gamma_{j}^{(J)}
  =\mathcal{R}_{j}\cup\Bigl(\,\bigsqcup_{s^{J}_{j}}I_{s^{J}_{j}}\Bigr),
\end{equation*}
where $\mathcal{R}_{j}$ is a finite set of points (possibly empty) and each $I_{s^{J}_{j}}$ is an open interval in $\Gamma_{j}^{(J)}$. Recall that $\rho$ is absolutely continuous. Then
		\begin{equation}\label{eq:Ij-M large}
			\begin{aligned}
				\mathbb{I}^{J}(\Sigma)=&\int_{\inf\Sigma}^{\sup\Sigma}\beta^J_{k,k_{\Delta}} \beta^J_{n,n_{\Delta}}\cos\big((k_{\Delta}-n_{\Delta})\rho\left(E\right)\big)e^{-{\rm i}t\Omega\left(E\right)}\cdot\rho^{\prime} \d E\\
				=&\frac{1}{k_{\Delta}-n_{\Delta}}\sum_{j=0}^{J}\sum_{s_{j}^{J}}\big(\beta^J_{k,k_{\Delta}} \beta^J_{n,n_{\Delta}}\sin\big((k_{\Delta}-n_{\Delta})\rho\left(E\right)\big)e^{-{\rm i}t\Omega\left(E\right)}\big)\Big|_{\inf I_{s_{j}^{J}}}^{\sup I_{s_{j}^{J}}}\\
				&\quad-\frac{1}{k_{\Delta}-n_{\Delta}}\sum_{j=0}^{J}\sum_{s_{j}^{J}}\int_{I_{s_{j}^{J}}}\big(\beta^J_{k,k_{\Delta}} \beta^J_{n,n_{\Delta}}e^{-{\rm i}t\Omega\left(E\right)}\big)^{\prime}\sin\big((k_{\Delta}-n_{\Delta})\rho\left(E\right)\big)\d E.\\
			\end{aligned}
		\end{equation}
		Using Proposition \ref{propsana1} again, we derive the upper bound for connected components of $\bigcup \Gamma_{j}^{(J)}$:
		\begin{equation*}
			\sum_{j=0}^{J}\#\left(s_{j}^{J}\right)\leq \left|\ln \varepsilon_{0}\right|^{2J^{2}d}.
		\end{equation*}
		For the first term  in equation \eqref{eq:Ij-M large}, by the $L^{\infty}$-estimate and inequality \eqref{esti_beta_0} that
		\begin{equation}\label{eq:first term}
			\begin{aligned}
				&\left|\frac{1}{k_{\Delta}-n_{\Delta}}\sum_{j=0}^{J}\sum_{s_{j}^{J}}\big(\beta^J_{k,k_{\Delta}} \beta^J_{n,n_{\Delta}}\sin\big((k_{\Delta}-n_{\Delta})\rho\left(E\right)\big)e^{-{\rm i}t\Omega\left(E\right)}\big)\Big|_{\inf I_{s_{j}^{J}}}^{\sup I_{s_{j}^{J}}}\right|\\
				\leq&\frac{2}{\left|k_{\Delta}-n_{\Delta}\right|}\left|\ln \varepsilon_{0}\right|^{2J^{2}d}\cdot\left(1+\varepsilon_{0}^{\frac{1}{4}}+\varepsilon_{0}^{3\sigma}\right)^{2}\\
                \leq&\frac{18}{\left|k_{\Delta}-n_{\Delta}\right|}\left|\ln \varepsilon_{0}\right|^{2J^{2}d}.
			\end{aligned}
		\end{equation}
For the second term in \eqref{eq:Ij-M large}, we need the following direct estimate
\begin{equation}\label{eq:betac1}
	\left|\beta^J_{k,k_{\Delta}} \beta^J_{n,n_{\Delta}}\right|_{\mathcal{C}^{1}\left(\Gamma^{\left(J\right)}_{j}\right)}\leq
	\begin{cases}
			3\left(1+\varepsilon_{0}^{\frac{1}{4}}\right)^{2}\;{\rm when}\;j=0;\\
				3\varepsilon_{j}^{6\sigma}\qquad{\rm when}\;1\leq j\leq J.\\
		\end{cases}
\end{equation}
        Using inequality
        \eqref{eq:betac1}, we deduce

		\begin{equation*}\label{eq:es-M small-second term}
		\begin{aligned}
            &\left|\big(\beta^J_{k,k_{\Delta}} \beta^J_{n,n_{\Delta}}e^{-{\rm i}t\Omega\left(E\right)}\big)^{\prime}\right|_{\left[\inf\Sigma,\sup\Sigma\right]}\\
            =&\left|\left(\beta^J_{k,k_{\Delta}} \beta^J_{n,n_{\Delta}}\right)^{\prime}e^{-{\rm i}t\Omega\left(E\right)}-{\rm i}t\Omega^{\prime}\left(E\right)\beta^J_{k,k_{\Delta}} \beta^J_{n,n_{\Delta}}e^{-{\rm i}t\Omega\left(E\right)}\right|_{\left[\inf\Sigma,\sup\Sigma\right]}\\
            \leq&3\left(\left(1+\varepsilon_{0}^{\frac{1}{4}}\right)^{2}+\varepsilon_{j}^{6\sigma}\right)\left(1+\frac{\JP{t}}{\Theta_{2}}\right).
		\end{aligned}
		\end{equation*}
		 Therefore,
		\begin{equation}\label{eq:second term}
			\begin{aligned}
				&\left|\frac{1}{k_{\Delta}-n_{\Delta}}\sum_{j=0}^{J}\sum_{s_{j}^{J}}\int_{I_{s_{j}^{J}}}\big(\beta^J_{k,k_{\Delta}} \beta^J_{n,n_{\Delta}}e^{-{\rm i}t\Omega\left(E\right)}\big)^{\prime}\sin\big((k_{\Delta}-n_{\Delta})\rho\left(E\right)\big)\d E\right|\\
                \leq&\frac{3\Theta_{1}\JP{t}}{\left|k_{\Delta}-n_{\Delta}\right|}\left(\left(1+\varepsilon_{0}^{\frac{1}{4}}\right)^{2}+\varepsilon_{j}^{6\sigma}\right)\left(1+\frac{1}{\Theta_{2}}\right).\\
			\end{aligned}
		\end{equation}
		Combining \eqref{eq:first term} and \eqref{eq:second term} we complete the proof.
\end{proof}

Unlike the case where $|k_{\Delta}-n_{\Delta}|$ is large, the estimate becomes  more complex when $|k_{\Delta}-n_{\Delta}|$ is small. In the following lemma, we reduce the estimation of $\mathbb{I}^{J}\left(\Sigma\right)$ to the restricted sum of integrals over the set where the phase derivatives are well-behaved.

\begin{lemma}\label{lem:small+error}
Set 
\begin{equation*}
    \Lambda_{j}^{\left(J\right)}:=\left\{E\in\Gamma_{j}^{\left(J\right)}:\left|\sin \xi(E)\right|\geq \varepsilon_{J}^{\frac{1}{20}}\right\}\quad{\rm for}\;0\leq j\leq J.
\end{equation*} 
Then
\begin{equation*}
    \left|\mathbb{I}^{J}\left(\Sigma\right)-\sum_{j=0}^{J}\mathbb{I}^{J}(\Lambda^{\left(J\right)}_{j};k_{\Delta},n_{\Delta},\rho_{J})\right|\leq61\varepsilon_{J}^{\frac{3\sigma}{4}}+100\left|k_{\Delta}-n_{\Delta}\right|\varepsilon^{\frac{1}{10}}_{J}.
\end{equation*}     
\end{lemma}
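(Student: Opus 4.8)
\textbf{Proof proposal for Lemma \ref{lem:small+error}.}

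The plan is to pass from $\mathbb{I}^{J}(\Sigma)$ to the sum over the "good" sets $\Lambda_j^{(J)}$ in two independent replacements, each controlled separately. The first replacement discards the contribution of the "flat" part $\Gamma_j^{(J)}\setminus\Lambda_j^{(J)}=\{\xi\in\Gamma_j^{(J)}:|\sin\xi|<\varepsilon_J^{1/20}\}$, where the phase $\rho$ is nearly stationary and Van der Corput gives nothing; the second replacement swaps the exact rotation number $\rho$ appearing in $\mathbb{I}^{J}$ for the smooth approximate rotation number $\rho_J$ in the definition of $\mathbb{I}^{J}(\cdot;k_\Delta,n_\Delta,\rho_J)$. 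So I would write
\[
\left|\mathbb{I}^{J}(\Sigma)-\sum_{j=0}^{J}\mathbb{I}^{J}(\Lambda_j^{(J)};k_\Delta,n_\Delta,\rho_J)\right|
\le \sum_{j=0}^{J}\left|\mathbb{I}^{J}(\Gamma_j^{(J)}\setminus\Lambda_j^{(J)};k_\Delta,n_\Delta,\rho)\right|
+\sum_{j=0}^{J}\left|\mathbb{I}^{J}(\Lambda_j^{(J)};k_\Delta,n_\Delta,\rho)-\mathbb{I}^{J}(\Lambda_j^{(J)};k_\Delta,n_\Delta,\rho_J)\right|,
\]
using $\overline{\bigcup_j\Gamma_j^{(J)}}=[\inf\Sigma,\sup\Sigma]$ and absolute continuity of $\rho$ so that the boundary points contribute nothing.

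For the first sum I would bound the integrand trivially in absolute value (the $\beta^J$ factors are $O(1)$ by \eqref{esti_beta_0} and \eqref{esti_beta_j+10}, and $|\cos|\le 1$, $|e^{-{\rm i}t\Omega}|=1$), so each term is at most a constant times $\int_{\Gamma_j^{(J)}\setminus\Lambda_j^{(J)}}\rho'\,\d E=|\rho(\Gamma_j^{(J)}\setminus\Lambda_j^{(J)})|$. Now $\Gamma_j^{(J)}\setminus\Lambda_j^{(J)}\subset\{|\sin\xi_J|\le \tfrac32\varepsilon_J^{1/20}\}$ after accounting for the $O(\varepsilon_J^{1/4})$-closeness of $\xi_J$ to $\xi$ and hence to $\rho$ on each piece; here I would invoke {\bf (S4)} of Proposition \ref{propsana1}, which gives $|\rho(\{|\sin\xi_J|\le\tfrac32\varepsilon_J^{1/20}\})|\le\varepsilon_J^{1/24}$, together with the supplementary bound $|\xi_J(\Gamma_j^{(J)}\setminus\Sigma_j)|\le\varepsilon_J^{7\sigma/8}$ to handle the part of $\Gamma_j^{(J)}$ outside $\Sigma_j$. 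Summing over $0\le j\le J$ and absorbing the polynomial-in-$|\ln\varepsilon_0|$ number of components into a power of $\varepsilon_J$ (again using $C\varepsilon_j^{\lambda_1}\le\varepsilon_j^{\lambda_2}$) yields a bound of the form $\text{const}\cdot\varepsilon_J^{3\sigma/4}$.

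For the second sum I would use the mean value theorem on $\cos\big((k_\Delta-n_\Delta)\rho\big)-\cos\big((k_\Delta-n_\Delta)\rho_J\big)$, which costs a factor $|k_\Delta-n_\Delta|\cdot|\rho-\rho_J|$, and separately the difference $\rho'\,\d E$ versus $\rho_J'\,\d E$, i.e. I would integrate the whole thing against the appropriate density. The key inputs are \eqref{rhoj}, giving $|\rho_J-\rho|_{\Sigma_j}\le\varepsilon_j^{1/4}$, and {\bf (S2)}, giving $|\xi_J+\sum\lkr{k_l}-\rho|_{\Gamma^{(J)}_{j+1}}\le\varepsilon_J^{1/4}$ on the larger set; on $\Lambda_j^{(J)}$, where $|\sin\xi_J|\ge\varepsilon_J^{1/20}$, the formula $\xi_J'=-({\rm tr}A_J)'/(2\sin\xi_J)$ from {\bf (S3)} together with \eqref{trace_A_J} controls $\rho_J'$, so the density swap is also of size a power of $\varepsilon_J$ times the total length $\Theta_1$. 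Collecting the two pieces, the first contributes $O(\varepsilon_J^{3\sigma/4})$ and the second $O(|k_\Delta-n_\Delta|\,\varepsilon_J^{1/10})$, and a crude accounting of the constants gives the stated $61\varepsilon_J^{3\sigma/4}+100|k_\Delta-n_\Delta|\varepsilon_J^{1/10}$.

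The main obstacle, and the step requiring the most care, is the first sum: one must be sure that $\Gamma_j^{(J)}\setminus\Lambda_j^{(J)}$ really is captured by the set $\{|\sin\xi_J|\le\tfrac32\varepsilon_J^{1/20}\}$ of {\bf (S4)} \emph{after} transporting the threshold through the approximations $|\sin\xi|\to|\sin\xi_J|$ and $\xi\to\xi\text{ via }\rho$, and that the part of $\Gamma_j^{(J)}$ lying outside $\Sigma_j$ — where {\bf (S4)}'s first estimate does not directly apply — is handled by the second estimate in {\bf (S4)}. One must also check the elementary inequality $|\sin a-\sin b|\le|a-b|$ is enough to move the threshold $\varepsilon_J^{1/20}$ to $\tfrac32\varepsilon_J^{1/20}$ given the $O(\varepsilon_J^{1/4})$ errors, which it is since $\varepsilon_J^{1/4}\ll\varepsilon_J^{1/20}$ for small $\varepsilon_0$. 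Everything else is routine triangle-inequality and geometric-series bookkeeping of the kind already carried out in Lemma \ref{lem:approx 1} and Lemma \ref{lem:large}.
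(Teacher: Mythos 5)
Your overall plan is the right one and you invoke the right ingredients, but the two-piece decomposition you propose conflates regions that need to be treated with different tools, and the paper's proof is a four-piece decomposition precisely to avoid this. Concretely, the paper writes
\[
\mathbb{I}^J(\Sigma)-\sum_{j=0}^{J}\mathbb{I}^J(\Lambda_j^{(J)};\rho_J)
=\underbrace{\sum_{j>J}\mathbb{I}^J(\Sigma_j)}_{\mathbf I_1}
+\underbrace{\sum_{j=0}^{J}\Bigl[\mathbb{I}^J(\Sigma_j\cap\Lambda_j^{(J)};\rho)-\mathbb{I}^J(\Sigma_j\cap\Lambda_j^{(J)};\rho_J)\Bigr]}_{\mathbf I_2}
+\underbrace{\sum_{j=0}^{J}\mathbb{I}^J(\Sigma_j\setminus\Lambda_j^{(J)};\rho)}_{\mathbf I_3}
-\underbrace{\sum_{j=0}^{J}\mathbb{I}^J(\Lambda_j^{(J)}\setminus\Sigma_j;\rho_J)}_{\mathbf I_4},
\]
so that the pointwise comparison of $\rho'\cos((k_\Delta-n_\Delta)\rho)$ with $\rho_J'\cos((k_\Delta-n_\Delta)\rho_J)$ happens \emph{only} on $\Sigma_j\cap\Lambda_j^{(J)}$ with $j\le J$. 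This is essential for two reasons you cannot bypass with your split. First, the closeness $|A_J-B|_{\mathcal C^1_W(\Sigma_j)}\le\varepsilon_J^{1/4}$ and $|\xi_J-\xi|_{\Sigma_j}\le\varepsilon_J^{1/4}$ from \eqref{error_whitney} and (\textbf{S2}) are only available on $\Sigma_j$ for $0\le j\le J$, yet your second sum performs the mean-value/derivative comparison on all of $\Lambda_j^{(J)}$, which (after intersecting with $\widetilde\Sigma$, where $\rho'\ne0$) contains points of $\Sigma_l$ with $l>J$; there the bound $|({\rm tr}B)'|_{\mathcal C^1_W(\Sigma_l)}\le N_{l-1}^{10\tau}$ is uncontrolled relative to $\varepsilon_J$ and the $\rho'$--$\rho_J'$ comparison breaks. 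The paper never needs $\rho'$ on $\Sigma_l$ for $l>J$: those sets are absorbed wholesale into $\mathbf I_1$, which is a pure $\rho$-measure bound $\sum_{l>J}|\rho(\Sigma_l)|\lesssim\varepsilon_J^{3\sigma/4}$. Second, in your first sum you propose to handle $\Gamma_j^{(J)}\setminus\Sigma_j$ via $|\xi_J(\Gamma_j^{(J)}\setminus\Sigma_j)|\le\varepsilon_J^{7\sigma/8}$ from (\textbf{S4}); but your first-sum integrand carries the density $\rho'\,\d E$, i.e.\ you need a bound on $\rho$-measure, and the $\xi_J$-measure estimate controls the $\rho_J'$-density instead. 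The paper uses that (\textbf{S4}) bound only in $\mathbf I_4$, which is exactly the piece integrated against $\rho_J'\,\d E$, and uses the $\rho$-measure estimates (the first part of (\textbf{S4}) and $|\rho(\Sigma_l)|$) for $\mathbf I_1$ and $\mathbf I_3$.

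So the fix is not a refinement of constants but a re-partitioning: split off the tail $\sum_{l>J}\mathbb{I}^J(\Sigma_j)$ first, then split the remainder along $\Sigma_j\cap\Lambda_j^{(J)}$, $\Sigma_j\setminus\Lambda_j^{(J)}$, and $\Lambda_j^{(J)}\setminus\Sigma_j$. Once you do that, your estimates for each piece — $\mathbf I_1$ and $\mathbf I_3$ by $\rho$-measure, $\mathbf I_4$ by $\rho_J$-measure, $\mathbf I_2$ by the derivative formula $\xi_J'=-({\rm tr}A_J)'/(2\sin\xi_J)$, the closeness $|\rho-\rho_J|\le\varepsilon_J^{1/4}$, and the lower bound $|\sin\xi_J|\ge\tfrac12\varepsilon_J^{1/20}$ obtained exactly as you describe from $|\sin\xi|\ge\varepsilon_J^{1/20}$ and $|\xi-\xi_J|\le\varepsilon_J^{1/4}$ — all go through and match the paper's.
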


\begin{proof}
 	 By the definitions of $\mathbb{I}^{J}\left(\Sigma\right)$ and $\sum_{j=0}^{J}\mathbb{I}^{J}(\Lambda^{\left(J\right)}_{j};k_{\Delta},n_{\Delta},\rho_{J})$, we have
		\begin{equation*}
			\begin{aligned}
				&\left|\mathbb{I}^{J}\left(\Sigma\right)-\sum_{j=0}^{J}\mathbb{I}^{J}(\Lambda^{\left(J\right)}_{j};k_{\Delta},n_{\Delta},\rho_{J})\right|\\
				&\leq\left|\sum_{j=J+1}^{\infty}\mathbb{I}^{J}\left(\Sigma_{j}\right)\right|\\
				&\quad+\left|\sum_{j=0}^{J}\int_{\Sigma_{j}\cap\Lambda_{j}^{\left(J\right)}}\beta^J_{k,k_{\Delta}} \beta^J_{n,n_{\Delta}}e^{-{\rm i}t\Omega}\Big(\rho^{\prime}\cos\big((k_{\Delta}-n_{\Delta})\rho\big)-\rho_{J}^{\prime}\cos\big((k_{\Delta}-n_{\Delta})\rho_{J}\Big)\d E \right|\\
				&\quad+\left|\sum_{j=0}^{J}\int_{\Sigma_{j}\setminus\Lambda_{j}^{\left(J\right)}}\beta^J_{k,k_{\Delta}} \beta^J_{n,n_{\Delta}}e^{-{\rm i}t\Omega}\rho^{\prime}\cos\big((k_{\Delta}-n_{\Delta})\rho\Big)\d E \right|\\
				&\quad+\left|\sum_{j=0}^{J}\int_{\Lambda_{j}^{\left(J\right)}\setminus\Sigma_{j}}\beta^J_{k,k_{\Delta}} \beta^J_{n,n_{\Delta}}e^{-{\rm i}t\Omega}\rho_{J}^{\prime}\cos\big((k_{\Delta}-n_{\Delta})\rho_{J}\Big)\d E \right|\\
				&:=\mathbf{I}_{1}+\mathbf{I}_{2}+\mathbf{I}_{3}+\mathbf{I}_{4}.
			\end{aligned}
		\end{equation*}
		Below we give the estimates for $\{\mathbf{I}_{l}\}_{1\leq l\leq 4}$.
		
		{\bf a. Estimate  $\mathbf{I}_{1}$}
		
		Combining \eqref{eq:beta on SJ} and \eqref{eq:es-beta 2}, we have 
        \begin{equation*}
            \left|\beta^{J}_{k,k_{\Delta}}\beta^{J}_{n,n_{\Delta}}\right|_{\Sigma_{j}}\leq\left(1+\varepsilon_{0}^{\frac{1}{4}}+\varepsilon_{0}^{\sigma}\right)^{2}+10\leq 19\quad{\rm for}\;j\geq J+1.
        \end{equation*}
        By \eqref{eq:rho-SigmaJ}, we deduce
		\begin{equation*}
			\mathbf{I}_{1}\leq\sum_{j=J}^{\infty}19\cdot3\left|\ln \varepsilon_{j}\right|^{2d}\varepsilon_{j}^{\sigma}\leq60\varepsilon_{J}^{\frac{3\sigma}{4}}.
		\end{equation*}
		
		{\bf b. Estimate  $\mathbf{I}_{2}$}
		
        We need to estimate
		\begin{equation}
			\begin{aligned}
				&\left|\rho^{\prime}\cos\big((k_{\Delta}-n_{\Delta})\rho\big)-\rho_{J}^{\prime}\cos\big((k_{\Delta}-n_{\Delta})\rho_{J}\big)\right|\\
				\leq&\left|\rho^{\prime}-\rho^{\prime}_{J}\right|+\left|\rho^{\prime}_{J}\big(\cos\left(\left(k_{\Delta}-n_{\Delta}\right)\rho_{J}\right)-\cos\left(\left(k_{\Delta}-n_{\Delta}\right)\rho\right)\big)\right|\\
				:=&\mathbf{M}_{1}+\mathbf{M}_{2}.\\
			\end{aligned}
		\end{equation}
		For any $0\leq j\leq J$, on $\Sigma_{j}\cap\Lambda_{j}^{\left(J\right)}$, we derive
		\begin{equation*}
			\left|\sin\xi_{J}\right|\geq\left|\sin \xi\right|-\left|\sin \xi-\sin\xi_{J}\right|\geq\frac{1}{2}\varepsilon_{J}^{\frac{1}{20}},
		\end{equation*}
        by Proposition \ref{propsana1} ({\bf S2}). On $\Sigma_{j}\cap\Lambda_{j}^{\left(J\right)}$, using Proposition \ref{propsana1} ({\bf S2}) ({\bf S3}), 
        
        we have
		\begin{equation}\label{eq:M1}
			\begin{aligned}
				\mathbf{M}_{1}=&\left|\frac{\left({\rm tr}A_{J}\right)^{\prime}}{2\sin\xi_{J}}-\frac{\left({\rm tr}B\right)^{\prime}}{2\sin\xi}\right|\\
				\leq&\frac{1}{2\left|\sin\xi_{J}\sin\xi\right|}\left(\left|\left({\rm tr}A_{J}\right)^{\prime}\left(\sin\xi-\sin\xi_{J}\right)\right|+\left|\left(\left({\rm tr}A_{J}\right)^{\prime}-\left({\rm tr}B\right)^{\prime}\right)\sin\xi_{J}\right|\right)\\
				\leq&\varepsilon_{J}^{-\frac{1}{10}}\left(1\cdot\varepsilon_{J}^{\frac{1}{4}}+2N_{j}^{10\eta}\cdot\varepsilon_{J}^{\frac{1}{4}}\right)\\
				\leq&\varepsilon^{\frac{1}{10}}_{J},
			\end{aligned}
		\end{equation}
		and 
		\begin{equation}\label{eq:M2}
			\begin{aligned}
				\mathbf{M}_{2}
=&
\left|
\rho^{\prime}_{J}
\bigl(
\cos((k_{\Delta}-n_{\Delta})\rho_J)
-
\cos((k_{\Delta}-n_{\Delta})\rho)
\bigr)
\right|.\\
				\leq&\left|\frac{\left({\rm tr}A_{J}\right)^{\prime}}{2\sin\xi_{J}}\right|\cdot2\left|\sin\Big(\frac{1}{2}\left(k_{\Delta}-n_{\Delta}\right)\left(\rho_{J}-\rho\right)\Big)\right|\\
				\leq&4N_{j}^{10\eta}\varepsilon_{J}^{-\frac{1}{20}}\left|k_{\Delta}-n_{\Delta}\right|\varepsilon^{\frac{1}{4}}_{J}.
			\end{aligned}
		\end{equation}

		By \eqref{esti_beta_0} and \eqref{esti_beta_j+10}, we have
		\begin{equation}\label{eq:betaJj}
			\left|\beta^J_{k,k_{\Delta}} \beta^J_{n,n_{\Delta}}\right|_{\Gamma^{\left(J\right)}_{j}}\leq \left(1+\varepsilon_{0}^{\frac{1}{4}}+\varepsilon_{0}^{3\sigma}\right)^{2}\leq 9\quad{\rm for}\;0\leq j\leq J.
		\end{equation}
Combining \eqref{eq:betaJj} with \eqref{eq:M1} and \eqref{eq:M2}, we derive
		\begin{equation*}
			\begin{aligned}
				\mathbf{I}_{2}\leq&\sum_{j=0}^{J}\int_{\Sigma_{j}\cap\Lambda^{\left(J\right)}_{j}}9\left(\varepsilon^{\frac{1}{10}}_{J}+4N_{j}^{10\eta}\varepsilon_{J}^{-\frac{1}{20}}\left|k_{\Delta}-n_{\Delta}\right|\varepsilon^{\frac{1}{4}}_{J}\right)\\
				\leq&9\Theta_{1}\varepsilon_{J}^{\frac{1}{10}}\left(1+\left|k_{\Delta}-n_{\Delta}\right|\right).
			\end{aligned}
		\end{equation*}
		
		{\bf c. Estimate  $\mathbf{I}_{3}$}
		
		For any $0\leq j\leq J$, on $\Sigma_{j}$, we have $\left|\xi-\xi_{J}\right|\leq\varepsilon^{\frac{1}{4}}_{J}$ via the fact in Proposition \ref{propsana1} ({\bf S2}). Moreover, on $\Sigma_{j}\setminus\Lambda_{j}^{\left(J\right)}$, we have $\left|\sin\xi\right|\leq\varepsilon^{\frac{1}{20}}_{J}$, which implies that
		\begin{equation*}
			\left|\sin\xi_{J}\right|\leq\left|\sin\xi_{j}-\sin\xi\right|+\left|\sin\xi\right|\leq\varepsilon^{\frac{1}{4}}_{J}+\varepsilon^{\frac{1}{20}}_{J}\leq\frac{3}{2}\varepsilon^{\frac{1}{20}}_{J}.
		\end{equation*}
		Thus combining \eqref{eq:betaJj} and Proposition \ref{propsana1} (S4), we have
		\begin{equation*}
			\mathbf{I}_{3}\leq 9\sum_{j=0}^{J}\int_{\Sigma_{j}\setminus\Lambda_{j}^{\left(J\right)}}\rho^{\prime}\d E\leq 9\rho(\{(\inf\Sigma,\sup\Sigma): |\sin\xi_{J}|\leq\frac32 \varepsilon_J^{\frac{1}{20}} \})\leq 9\varepsilon_{J}^{\frac{1}{24}}.
		\end{equation*}
		
		{\bf d. Estimate $\mathbf{I}_{4}$}
		
		By Proposition \ref{propsana1} (S4), for any $0\leq j\leq J$, we have $\rho\left(\Gamma^{\left(J\right)}_{j}\setminus\Sigma_{j}\right)\leq\varepsilon^{\frac{7\sigma}{8}}_{J}$. Applying \eqref{eq:betaJj}, we get
		\begin{equation*}
			\mathbf{I}_{4}\leq9\sum_{j=0}^{J}\rho_{J}\left(\Gamma^{\left(J\right)}_{j}\setminus\Sigma_{j}\right)\leq9\left(J+1\right)\varepsilon^{\frac{7\sigma}{8}}_{J}.
		\end{equation*}
		
		{\bf e. Back to estimate  $\left|\mathbb{I}^{J}\left(\Sigma\right)-\sum_{j=0}^{J}\mathbb{I}^{J}(\Lambda^{\left(J\right)}_{j};k_{\Delta},n_{\Delta},\rho_{J})\right|$}
		
		By the upper bound for $\{\mathbf{I}_{l}\}_{1\leq l\leq 4}$, we obtain
		\begin{equation*}
			\begin{aligned}
				&\left|\mathbb{I}^{J}\left(\Sigma\right)-\sum_{j=0}^{J}\mathbb{I}^{J}(\Lambda^{\left(J\right)}_{j};k_{\Delta},n_{\Delta},\rho_{J})\right|\\
				\leq&60\varepsilon_{J}^{\frac{3\sigma}{4}}+9\Theta_{1}\varepsilon_{J}^{\frac{1}{10}}\left(1+\left|k_{\Delta}-n_{\Delta}\right|\right)+9\varepsilon_{J}^{\frac{1}{24}}+9\left(J+1\right)\varepsilon^{\frac{7\sigma}{8}}_{J}\\
				\leq&61\varepsilon_{J}^{\frac{3\sigma}{4}}+100\left|k_{\Delta}-n_{\Delta}\right|\varepsilon^{\frac{1}{10}}_{J}\quad\left({\rm by}\;J\varepsilon_{J}^{\frac{7}{8}\sigma}\leq\varepsilon_{J}^{\frac{13}{16}\sigma}\right).
			\end{aligned}
		\end{equation*} 
        This completes the proof.
\end{proof}

We set 
\begin{equation*}
    \widetilde{\Lambda}^{(J)}_{j}:=\left\{E\in\Gamma^{\left(J\right)}_{j}:\left|\sin\xi_{J}\right|>\frac{3}{2}\varepsilon^{\frac{1}{20}}_{J}\right\}\quad{\rm for}\;0\leq j\leq J.
\end{equation*}
and denote 
\begin{equation*}
    S_{1}:=\bigcup_{j=0}^{J}\Lambda^{\left(J\right)}_{j}\setminus \bigcup_{j=0}^{J}\widetilde{\Lambda}^{\left(J\right)}_{j}\quad,\quad S_{2}:=\bigcup_{j=0}^{J}\Lambda^{\left(J\right)}_{j}\cap \bigcup_{j=0}^{J}\widetilde{\Lambda}^{\left(J\right)}_{j}\;.
\end{equation*}
For the corresponding integral on $S_1$, we have the following lemma.

\begin{lemma}\label{lem:S1}
For any $t\in\mathbb R$,
\begin{equation*}
\left|\mathbb{I}^{J}(S_{1};k_{\Delta},n_{\Delta},\rho_{J})\right|
\le 54|\ln\varepsilon_{0}|^{2J^{2}d}\,\varepsilon_{J}^{\frac{1}{20}}.
\end{equation*}
\end{lemma}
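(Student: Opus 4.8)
The plan is to bound $\mathbb{I}^{J}(S_{1};k_{\Delta},n_{\Delta},\rho_{J})$ \emph{crudely}, by pulling the absolute value inside the integral. Since the asserted estimate must be uniform in $t\in\R$, there is no room for integration by parts in $E$ to exploit the oscillation of $e^{-{\rm i}t\Omega(E)}$: that would reintroduce a factor $\JP{t}$, exactly as in Lemma~\ref{lem:large}. Using $|\beta^{J}_{k,k_{\Delta}}\beta^{J}_{n,n_{\Delta}}|\le 9$ on each $\Gamma^{(J)}_{j}$ from \eqref{eq:betaJj}, together with $|\cos((k_{\Delta}-n_{\Delta})\rho_{J})\,e^{-{\rm i}t\Omega}|\le 1$, one gets
\[
\bigl|\mathbb{I}^{J}(S_{1};k_{\Delta},n_{\Delta},\rho_{J})\bigr|\le 9\int_{S_{1}}|\rho_{J}'|\,\d E,
\]
so the whole lemma reduces to showing that $\rho_{J}$ sweeps out only a tiny amount of rotation number on $S_{1}$, namely $\int_{S_{1}}|\rho_{J}'|\,\d E\le 6|\ln\varepsilon_{0}|^{2J^{2}d}\,\varepsilon_{J}^{1/20}$.

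Next I would pin down the geometry of $S_{1}$. By definition $S_{1}$ is disjoint from $\bigcup_{j}\widetilde{\Lambda}^{(J)}_{j}$, so every $E\in S_{1}$ satisfies $|\sin\xi_{J}(E)|\le\tfrac32\varepsilon_{J}^{1/20}$; on the other hand $S_{1}\subset\bigcup_{j}\Lambda^{(J)}_{j}$ forces the matching lower bound $|\sin\xi_{J}(E)|\ge\tfrac12\varepsilon_{J}^{1/20}$ (from the definition of $\Lambda^{(J)}_{j}$, together with the comparison $|\xi_{J}-\xi|_{\Sigma_{j}}\le\varepsilon_{J}^{1/4}$ of Proposition~\ref{propsana1}\,(S2) on the part of $S_{1}$ lying in $\Sigma_{j}$). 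In particular $\sin\xi_{J}\ne 0$ on $S_{1}$, so $A_{J}$ is elliptic there, $\xi_{J}=\arccos(\tfrac12\,{\rm tr}A_{J})$ is $\cC^{1}$, and, since $\rho_{J}-\xi_{J}=\sum_{l}\lkr{k_{l}}$ is locally constant on $\bigcup_{j}\Gamma^{(J)}_{j}$, one has $\rho_{J}'=\xi_{J}'=-\frac{({\rm tr}A_{J})'}{2\sin\xi_{J}}$. Because $({\rm tr}A_{J})'$ does not vanish on $\Gamma^{(J)}_{j}$ (by \eqref{sigma_m_0} when $j=0$, and by \eqref{trace_A_J} when $j\ge 1$) and $\sin\xi_{J}$ keeps a fixed sign on each connected component of $\{\sin\xi_{J}\ne 0\}$, the function $\xi_{J}$ is strictly monotone on every connected component $I$ of $S_{1}$; hence $\int_{I}|\rho_{J}'|\,\d E=|\xi_{J}(I)|$, and $\xi_{J}(I)$ is a connected subset of $\{\,t:\ \tfrac12\varepsilon_{J}^{1/20}\le|\sin t|\le\tfrac32\varepsilon_{J}^{1/20}\,\}$ that avoids the zeros of $\sin$, hence lies in one short arc of length $\le\arcsin(\tfrac32\varepsilon_{J}^{1/20})-\arcsin(\tfrac12\varepsilon_{J}^{1/20})\le\varepsilon_{J}^{1/20}$ for $\varepsilon_{0}$ small. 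Thus $\int_{I}|\rho_{J}'|\,\d E\le\varepsilon_{J}^{1/20}$ for each component $I$ of $S_{1}$.

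Finally I would count components and assemble. Using the interval counts of Proposition~\ref{propsana1}\,(S3) applied with the two thresholds $\varepsilon_{J}^{1/20}$ and $\tfrac32\varepsilon_{J}^{1/20}$, together with the fact that $S_{1}\subset\bigcup_{j}\Gamma^{(J)}_{j}\setminus\bigcup_{j}\widetilde{\Lambda}^{(J)}_{j}$ is obtained from at most $|\ln\varepsilon_{0}|^{2J^{2}d}$ intervals by deleting at most $2|\ln\varepsilon_{0}|^{2J^{2}d}$ open intervals, one sees that $S_{1}$ has at most $6|\ln\varepsilon_{0}|^{2J^{2}d}$ connected components; summing the per-component bound gives $\int_{S_{1}}|\rho_{J}'|\,\d E\le 6|\ln\varepsilon_{0}|^{2J^{2}d}\varepsilon_{J}^{1/20}$ and therefore $\bigl|\mathbb{I}^{J}(S_{1};k_{\Delta},n_{\Delta},\rho_{J})\bigr|\le 54|\ln\varepsilon_{0}|^{2J^{2}d}\varepsilon_{J}^{1/20}$. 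The step I expect to be the crux is the middle paragraph: verifying that on the near-degenerate strip $S_{1}$ — which is wrapped around the hyperbolic locus $\{\sin\xi_{J}=0\}$ — the approximate rotation number $\rho_{J}$ is still $\cC^{1}$ and strictly monotone (i.e.\ that $S_{1}$ does not actually meet that locus), so that $\int_{S_{1}}|\rho_{J}'|$ is a genuine sum of arc lengths rather than an uncontrolled total variation; once this and the elementary observation that each such arc has length $O(\varepsilon_{J}^{1/20})$ are in hand, the component count from Proposition~\ref{propsana1} closes the argument.
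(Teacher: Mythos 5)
Your overall strategy is the same as the paper's: pull the absolute value inside using the $L^\infty$ bound $|\beta^J_{k,k_\Delta}\beta^J_{n,n_\Delta}|\le 9$ to reduce to $\int_{S_1}|\rho_J'|\,\d E$, change variables to $y=\rho_J(E)$ on each connected component of $\bigcup_j\Gamma^{(J)}_j$, and bound the resulting Lebesgue measure of the near-zero set of $\sin$ using the component count from Proposition~\ref{propsana1}. The paper then finishes with the one-line elementary estimate $|\{y\in I:\;|\sin y|\le\delta\}|\le 4\delta$ and a sum over at most $|\ln\varepsilon_0|^{2J^2 d}$ components.

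Where you deviate is your middle paragraph, and this is the one part of your proposal that is both incompletely justified and actually unnecessary. You try to establish a matching \emph{lower} bound $|\sin\xi_J|\ge\frac12\varepsilon_J^{1/20}$ on $S_1$ so that each arc $\xi_J(I)$ avoids the zeros of $\sin$ and is trapped in a short annulus of width $\le\varepsilon_J^{1/20}$. But, as you yourself note, the comparison $|\xi_J-\xi|_{\Sigma_j}\le\varepsilon_J^{1/4}$ from Proposition~\ref{propsana1}\,(S2) is only stated on $\Sigma_j$ for $0\le j\le J$; it gives you nothing on $S_1\cap\Sigma_l$ for $l>J$ or on $S_1\setminus\widetilde\Sigma$, so you cannot conclude that $S_1$ really is separated from the hyperbolic locus $\{\sin\xi_J=0\}$. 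The paper never invokes such a lower bound: it uses only the upper bound $|\sin\xi_J|\le\frac32\varepsilon_J^{1/20}$ on $S_1$ (which is immediate, since $S_1$ avoids $\bigcup_j\widetilde\Lambda^{(J)}_j$), asserts that $\rho_J$ is smooth and monotone on each full component $\mathcal L_\nu$ of $\bigcup_j\Gamma^{(J)}_j$ — a cleaner fact than smoothness on $S_1$ alone — and bounds $|\rho_J(S_1\cap\mathcal L_\nu)|$ by the measure of $\{|\sin(y-c_\nu)|\le\frac32\varepsilon_J^{1/20}\}$, which works whether or not the arc straddles a zero. Dropping your lower-bound detour and accepting a per-period bound of $\le 4\cdot\frac32\varepsilon_J^{1/20}=6\varepsilon_J^{1/20}$ (instead of $\le\varepsilon_J^{1/20}$) recovers exactly the paper's constant $54$ when summed over $|\ln\varepsilon_0|^{2J^2 d}$ components, so your proposal is essentially correct once you delete the unsupported lower-bound claim.
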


\begin{proof}
By definition, we have
\[
S_1 
\subset \Bigl\{E\in \bigcup_{j=0}^J\Gamma_j^{(J)}:\ 
|\sin\xi_J(E)|\le \frac{3}{2}\varepsilon_J^{\frac{1}{20}}\Bigr\}.
\]
Hence, by \eqref{eq:betaJj},
\begin{align*}
\left|\mathbb{I}^{J}(S_{1};k_{\Delta},n_{\Delta},\rho_{J})\right|
&=\left|\int_{S_1}\beta^J_{k,k_{\Delta}}(E)\beta^J_{n,n_{\Delta}}(E)\,
\cos\bigl((k_{\Delta}-n_{\Delta})\rho_{J}(E)\bigr)\,e^{-{\rm i}t\Omega(E)}\,
\rho_J'(E)\,\d E\right|\\
&\le9\int_{S_1}\rho_J'(E)\,\d E.
\end{align*}
Now we decompose $\bigcup_{j=0}^J\Gamma_j^{(J)}$ into its (open) connected components:
\[
\bigcup_{j=0}^J\Gamma_j^{(J)}=\mathcal R \sqcup \Bigl(\bigsqcup_{\nu}\mathcal L_\nu\Bigr),
\]
where $\mathcal R$ is a finite set and each $\mathcal L_\nu$ is an open interval. On each $\mathcal L_\nu$, the function $\rho_J$ is smooth and strictly monotone, and moreover
$\rho_J=\xi_J+\sum_{l=0}^{J-1}\langle k_l\rangle$ with the sum constant on $\mathcal L_\nu$.
Hence on $\mathcal L_\nu$ the condition $|\sin\xi_J(E)|\le \frac{3}{2}\varepsilon_J^{\frac{1}{20}}$
is equivalent to
\[
|\sin(\rho_J(E)-c_\nu)|\le \frac{3}{2}\varepsilon_J^{\frac{1}{20}}
\]
for some constant $c_\nu$.
Perform the change of variables $y=\rho_J(E)$ on $\mathcal L_\nu$, then
\[
\int_{S_1\cap \mathcal L_\nu}\rho_J'(E)\,\d E
=|\rho_J(S_1\cap \mathcal L_\nu)|
\le \left|\left\{y\in \rho_J(\mathcal L_\nu):\ |\sin(y-c_\nu)|\le \frac{3}{2}\varepsilon_J^{\frac{1}{20}}\right\}\right|.
\]
For any interval $I\subset\mathbb R$ and any $\delta\in(0,1)$, one has the elementary estimate
\[
\left|\{y\in I:\ |\sin y|\le \delta\}\right|\le 4\delta,
\]
since on each period $[0,\pi]$ the set $\{|\sin y|\le\delta\}$ consists of two subintervals
near $0$ and $\pi$ of total length $\le 4\delta$.
Applying this with $\delta=\frac{3}{2}\varepsilon_J^{\frac{1}{20}}$ gives
\[
\int_{S_1\cap \mathcal L_\nu}\rho_J'(E)\,\d E \le 6\,\varepsilon_J^{\frac{1}{20}}.
\]
Summing over all components,
\[
\int_{S_1}\rho_J'(E)\,\d E
\le \sum_{\nu}\int_{S_1\cap \mathcal L_\nu}\rho_J'(E)\,\d E
\le 6|\ln\varepsilon_0|^{2J^2 d}\,\varepsilon_J^{\frac{1}{20}}.
\]
Thus,
\[
\left|\mathbb{I}^{J}(S_{1};k_{\Delta},n_{\Delta},\rho_{J})\right|
\le 54\,|\ln\varepsilon_0|^{2J^2 d}\,\varepsilon_J^{\frac{1}{20}}.
\]
This completes the proof.
\end{proof}

Note that there are at most $2$ intervals contained in each connected component of $\bigcup_{j=0}^{J}\Lambda^{\left(J\right)}_{j}$. For the corresponding integral on $S_2$, we obtain estimate as follows.

\begin{lemma}\label{lem:S2}
    For any $J\geq 1$, there exists a constant $C(m)$ such that
    \begin{equation*}
        \left|\mathbb{I}^{J}(S_{2};k_{\Delta},n_{\Delta},\rho_{J})\right|\leq C\left(m\right)\left|\ln\varepsilon_{0}\right|^{2J^{2}d}\JP{t}^{-\frac{1}{3}}.
    \end{equation*}
\end{lemma}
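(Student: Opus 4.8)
\textbf{Proof proposal for Lemma~\ref{lem:S2}.} The plan is to apply Van der Corput's lemma on each of the finitely many subintervals comprising $S_2$, where by Proposition~\ref{propsana1}~({\bf S3}) the approximate rotation number $\xi_J$ (and hence $\rho_J$, which differs from $\xi_J$ by a locally constant shift) is smooth with controlled first and second derivatives, and in particular $\xi_J'>\tfrac13$. The natural move is to change variables from $E$ to the rotation variable $y=\rho_J(E)$ on each connected piece $\mathcal{L}_\nu$ of $\bigcup_{j=0}^J\widetilde\Lambda_j^{(J)}\cap\bigcup_{j=0}^J\Lambda_j^{(J)}$; since $S_2$ meets each connected component of $\bigcup_j\Lambda_j^{(J)}$ in at most two intervals and there are at most $|\ln\varepsilon_0|^{2J^2d}$ such components, we will end up summing $O(|\ln\varepsilon_0|^{2J^2d})$ contributions, which accounts exactly for the prefactor in the statement.

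After the substitution, each piece becomes an oscillatory integral of the form $\int b(y)\,e^{\mathrm{i}t\,\psi(y)}\,\d y$ where the amplitude $b$ collects $\beta^J_{k,k_\Delta}\beta^J_{n,n_\Delta}\cos((k_\Delta-n_\Delta)y)$ together with the Jacobian factor, and the phase is $\psi(y)=-\Omega(E(y))=-\sqrt{2+m^2+E(y)}$. The key computation is to show that $\psi''$ is bounded below in absolute value on each piece up to the relevant powers of $\varepsilon_J$ and $|\sin\xi_J|$: using the chain rule, $\tfrac{\d E}{\d y}=1/\rho_J'=-2\sin\xi_J/(\mathrm{tr}A_J)'$, and the bounds \eqref{esti_plat}, \eqref{trace_A_J} on $(\mathrm{tr}A_J)'$, $\xi_J'$, $\xi_J''$, together with the smoothness and nonvanishing of $\Omega$ and $\Omega'$ on the spectrum (here is where $m>0$ and the constant $C(m)$ enter, via $\Theta_2=\inf_{E\in\Sigma}\Omega(E)>0$). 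One must track how $|\sin\xi_J|$ appears: the region $S_2$ is further dyadically decomposed according to the size of $|\sin\xi_J|\in(\tfrac32\varepsilon_J^{1/20},1]$, and on the dyadic block where $|\sin\xi_J|\sim 2^{-\ell}$ the second-derivative lower bound degrades like $2^{-\ell}$ while the block length is $\lesssim 2^{-\ell}$ in the $y$ variable; the second-derivative Van der Corput bound then gives $\lesssim (2^{-\ell}\,|t|)^{-1/2}$ on each block, and one also has the trivial bound $\lesssim 2^{-\ell}$ from the block length, so optimizing/summing the $\min$ over the $O(\log)$ dyadic scales produces the $\JP{t}^{-1/3}$ decay (balancing $2^{-\ell}$ against $(2^{-\ell}|t|)^{-1/2}$ at $2^{-\ell}\sim |t|^{-1/3}$).

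More precisely, I would split the analysis into three regimes in the $y$-variable on each $\mathcal{L}_\nu$: the block $B_0$ where $|\sin\xi_J|\gtrsim 1$, on which $|\psi''|\gtrsim c(m)$ and Van der Corput's second-derivative estimate (with the amplitude's $\mathcal{C}^1$ norm controlled by \eqref{eq:betac1} and the $\mathcal{C}^1$ bound on $\Omega$) yields $\lesssim C(m)\JP{t}^{-1/2}$; the intermediate dyadic blocks $B_\ell$ with $2^{-\ell}\in[\JP{t}^{-1/3}, 1]$, handled as above by $\min\{2^{-\ell}, C(m)(2^{-\ell}\JP{t})^{-1/2}\}\lesssim C(m)\JP{t}^{-1/3}$; and the smallest block where $|\sin\xi_J|$ is comparable to $\varepsilon_J^{1/20}$ or to $\JP{t}^{-1/3}$, on which we simply use the trivial bound by the $y$-length, which is $\lesssim \JP{t}^{-1/3}$ (for $|t|$ large) or absorbed into $C(m)$ for bounded $|t|$; since there are only $O(\log\JP{t})$ dyadic scales but we only need the maximal one, no extra logarithm is generated here. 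Finally, the $\cos((k_\Delta-n_\Delta)y)$ factor does not disturb the $\mathcal{C}^1$-amplitude bound needed for Van der Corput because $|k_\Delta-n_\Delta|$ is a fixed bounded quantity in the regime of interest (the large-$|k_\Delta-n_\Delta|$ case being already disposed of by Lemma~\ref{lem:large}); summing the $O(|\ln\varepsilon_0|^{2J^2d})$ components of $S_2$ and the $O(1)$ dyadic-top contributions gives the claimed bound $C(m)|\ln\varepsilon_0|^{2J^2d}\JP{t}^{-1/3}$.

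\textbf{Main obstacle.} The delicate point is the uniform control of the phase's second derivative $\psi''=\tfrac{\d^2}{\d y^2}\bigl(-\sqrt{2+m^2+E(y)}\bigr)$ in terms of the available KAM estimates: one must carefully combine $\tfrac{\d E}{\d y}=1/\rho_J'$, the lower bound $\xi_J'>\tfrac13$, the two-sided bounds on $\xi_J''$ in \eqref{esti_plat}, and the derivatives of $\Omega$, keeping the dependence on $|\sin\xi_J|$ explicit and checking that the $\varepsilon_J$-powers arising from $\xi_J''\sim \varepsilon_j^{3\sigma/4}/|\sin\xi_J|^3$ in the lower bound are harmless (they are dominated by the scale $|t|^{-1/3}$ once $J$ is chosen appropriately, which is exactly how $J$ will be tuned in the final proof of Lemma~\ref{lem:disper-log}), so that on each dyadic block the effective second-derivative lower bound is genuinely of size $\sim |\sin\xi_J|\cdot c(m)$ and the Van der Corput exponent $1/2$ per block is available; the bookkeeping of constants through this chain, and ensuring they depend only on $m$ (and not on $J$, $t$, or $\theta$), is the part requiring the most care.
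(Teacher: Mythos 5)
Your overall plan (change variables to $y=\rho_J$, use Van der Corput on each of the $O(|\ln\varepsilon_0|^{2J^2d})$ pieces) matches the paper, but two central steps are wrong as stated.

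First, the dyadic decomposition by $|\sin\xi_J|$ does not capture where $\frac{\d^2}{\d y^2}\Omega(E(y))$ degenerates, and the claim that "the second-derivative lower bound degrades like $|\sin\xi_J|$" is false. Working on $\Gamma_0^{(J)}$, the phase is to leading order the free phase $\widetilde\Omega(\rho)=\sqrt{2+m^2-2\cos\rho}$, whose second derivative $\widetilde\Omega''(\rho)=\tfrac{\cos\rho}{\widetilde\Omega}-\tfrac{\sin^2\rho}{\widetilde\Omega^3}$ is \emph{nonzero} at $\rho=0,\pi$ (it equals $1/m$ and $-1/\sqrt{4+m^2}$ respectively) and vanishes at an interior point $\rho_*$ where $\sin\rho_*$ is bounded below by a constant depending only on $m$; by contrast it is the \emph{third} derivative which satisfies $C_1(m)|\sin\rho|\le|\widetilde\Omega'''(\rho)|\le C_2(m)|\sin\rho|$ and hence vanishes at $\rho=0,\pi$. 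So on the dyadic block where $|\sin\xi_J|$ is small, the second derivative is actually bounded below, and on the block where the second derivative vanishes, $|\sin\xi_J|\sim\sin\rho_*\sim 1$. Your balance $\min\{2^{-\ell},(2^{-\ell}|t|)^{-1/2}\}$ is computed from a false lower bound. The correct mechanism is the one in the paper: establish the \emph{uniform} lower bound $|\mathcal H_t''|+|\mathcal H_t'''|\ge C_0(m)\,t$ (a consequence of the fact that $\widetilde\Omega''$ and $\widetilde\Omega'''$ cannot vanish simultaneously), then split $\rho_J(\mathcal L_\kappa)$ into the region $|\sin\rho_J|\ge C_0/(4C_2)$ where $|\mathcal H_t'''|\gtrsim t$ (third-order Van der Corput, giving $t^{-1/3}$) and its complement where $|\mathcal H_t''|\gtrsim t$ (second-order Van der Corput, giving $t^{-1/2}$); each regime is a bounded number of intervals, so no dyadic summation or logarithmic loss arises. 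This is not a cosmetic difference — a genuine dyadic decomposition by $|\mathcal H_t''/t|$, combined with a trivial bound from block length, \emph{can} be made to produce $t^{-1/3}$, but your decomposition by $|\sin\xi_J|$ cannot, because near $\rho_*$ the block has $O(1)$ length yet the second derivative vanishes.

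Second, you cannot leave $\cos((k_\Delta-n_\Delta)y)$ in the amplitude by asserting $|k_\Delta-n_\Delta|$ is "fixed bounded." In the final proof of Lemma~\ref{lem:disper-log}, $|k_\Delta-n_\Delta|$ is only controlled by the threshold $\langle t\rangle^{4/3}$ (the large-$|k_\Delta-n_\Delta|$ branch handled by Lemma~\ref{lem:large} kicks in above that), so in the regime where Lemma~\ref{lem:S2} is used one may have $|k_\Delta-n_\Delta|$ as large as $\langle t\rangle^{4/3}$. If the amplitude's $\mathcal C^1$ norm carries a factor $|k_\Delta-n_\Delta|$, Van der Corput then yields $\langle t\rangle^{4/3-1/3}=\langle t\rangle$, destroying the decay. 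The paper avoids this by absorbing $(k_\Delta-n_\Delta)\rho_J$ into the phase $\mathcal H_t(\rho_J)=t\,\Omega(E(\rho_J))+(k_\Delta-n_\Delta)\rho_J$: since this added term is linear in $\rho_J$, it does not affect $\mathcal H_t''$ or $\mathcal H_t'''$, so the phase estimates are unchanged and the amplitude is just $\beta^J\beta^J$ (uniformly $\mathcal C^1$-bounded). You must do the same. You also do not discuss the pieces $\mathcal L_\kappa\subset\Gamma_j^{(J)}$ with $j\ge1$; the paper treats these separately using the lower bound on $\xi_J''$ from \eqref{esti_plat}, where the amplitude bound $\varepsilon_j^{3\sigma}$ compensates for the small $\varepsilon$-powers in the second-derivative lower bound, and only second-order Van der Corput is needed there (giving $\langle t\rangle^{-1/2}$, which is stronger than required).
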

\begin{proof}
    We change variables first. By Proposition \ref{propsana1} ({\bf S3}), the set
		\begin{equation*}
			\bigcup_{j=0}^{J}\left\{E\in \Gamma_{j}^{\left(J\right)}:\left|\sin\xi_{J}\right|\geq \frac{3}{2}\varepsilon^{\frac{1}{20}}_{J}\right\}
		\end{equation*}
		consists of at most $2\left|\ln\varepsilon_{0}\right|^{2J^{2}d}$ connected components. Here and below, we set
		\begin{equation*}
			\bigcup_{j=0}^{J}\left\{E\in \Gamma_{j}^{\left(J\right)}:\left|\sin\xi_{J}\right|\geq \frac{3}{2}\varepsilon^{\frac{1}{20}}_{J}\right\}=\mathcal{T}\sqcup\left(\bigsqcup_{\kappa}\mathcal{L}_{\kappa}\right),
		\end{equation*}
		where $\mathcal{T}$ is a finite subset (perhaps empty) and $\mathcal{L}_{\kappa}$ are open intervals. By definition, we have
		\begin{equation*}
			\sum_{j=0}^{J}\mathbb{I}^{J}(\Lambda^{\left(J\right)}_{j};k_{\Delta},n_{\Delta},\rho_{J})=\sum_{\kappa}\mathbb{I}^{J}(\mathcal{L}_{\kappa};k_{\Delta},n_{\Delta},\rho_{J}),
		\end{equation*}
		and for any fixed $\kappa$, 
		\begin{equation*}
			\mathbb{I}^{J}(\mathcal{L}_{\kappa};k_{\Delta},n_{\Delta},\rho_{J})=\int_{\mathcal{L}_{\kappa}}\beta^J_{k,k_{\Delta}} \beta^J_{n,n_{\Delta}}\cos
			\big((k_{\Delta}-n_{\Delta})\rho_{J}\left(E\right)\big)e^{-{\rm i}t\Omega\left(E\right)}\cdot\rho_{J}^{\prime} \d E.
		\end{equation*}
		We denote the inverse of $\rho_{J}=\rho_{J}\left(E\right)$ by
		\begin{equation*}
		  E=E\left(\rho_{J}\right)\quad{\rm on\;interval}\;\left(\rho_{J}\left(\inf\mathcal{L}_{\kappa}\right),\rho_{J}\left(\sup\mathcal{L}_{\kappa}\right)\right),
		\end{equation*}
since in each $\mathcal{L}_{\kappa}$, $\rho_{J}$ is strictly increasing. Thus, we derive
		\begin{equation}\label{eq:OI}
			\begin{aligned}
				&\mathbb{I}^{J}(\mathcal{L}_{\kappa}\cap S_{2};k_{\Delta},n_{\Delta},\rho_{J})\\
				=&\int_{\rho_{J}\left(\mathcal{L}_{\kappa}\cap S_{2}\right)}\beta^J_{k,k_{\Delta}}\left(E\left(\rho_{J}\right)\right)\beta^J_{n,n_{\Delta}}\left(E\left(\rho_{J}\right)\right)\cos
				\big((k_{\Delta}-n_{\Delta})\rho_{J}\big)e^{-{\rm i}t\Omega\left(E\left(\rho_{J}\right)\right)}\d\rho_{J}\\
				=&\frac{1}{2}\int_{\rho_{J}\left(\mathcal{L}_{\kappa}\cap S_{2}\right)}\beta^J_{k,k_{\Delta}}\left(E\left(\rho_{J}\right)\right)\beta^J_{n,n_{\Delta}}\left(E\left(\rho_{J}\right)\right)e^{-{\rm i}t\Omega\left(E\left(\rho_{J}\right)\right)}\left(e^{{\rm i}(k_{\Delta}-n_{\Delta})\rho_{J}}+e^{-{\rm i}(k_{\Delta}-n_{\Delta})\rho_{J}}\right)\d\rho_{J}.
			\end{aligned}
		\end{equation}
        
		Now we deal with the oscillatory integrals. We briefly analyze the amplitude function. Using \eqref{esti_beta_0} and \eqref{esti_beta_j+10}, we derive 
		\begin{equation*}
			\left|\beta^{J}_{k,k_{\Delta}}\right|_{\mathcal{C}^{1}\left(\Gamma_{0}^{(J)}\right)}\leq 2\quad{\rm and}\quad\left|\beta^{J}_{k,k_{\Delta}}\right|_{\mathcal{C}^{1}\left(\Gamma_{j+1}^{(J)}\right)}\leq\varepsilon_{j}^{3\sigma}\quad{\rm for}\;0\leq j\leq J-1.
		\end{equation*}
		and by the fact that $\left|\frac{\d \rho_{J}}{\d E}\right|>\frac{1}{3}$, we have $\left|\frac{\d E}{\d\rho_{J}}\right|<3$. Hence the chain law implies that
		\begin{equation*}
			\left|\beta^{J}_{k,k_{\Delta}}\left(E\left(\cdot\right)\right)\right|_{\mathcal{C}^{1}\left(\rho_{J}\left(\Gamma_{0}^{(J)}\right)\right)}\leq 6\quad{\rm and}\quad\left|\beta^{J}_{k,k_{\Delta}}\left(E\left(\cdot\right)\right)\right|_{\mathcal{C}^{1}\left(\rho_{J}\left(\Gamma_{j+1}^{(J)}\right)\right)}\leq3\varepsilon_{j}^{3\sigma}\quad{\rm for}\;0\leq j\leq J-1.
		\end{equation*}
		For any $\kappa$, we always have
		\begin{equation*}
			\left|\beta^J_{k,k_{\Delta}}\left(E\left(\cdot\right)\right)\beta^J_{n,n_{\Delta}}\left(E\left(\cdot\right)\right)\right|_{\mathcal{C}^{1}\left(\rho_{J}\left(\mathcal{L}_{\kappa}\right)\right)}\leq 108.
		\end{equation*}

		Recall \eqref{eq:OI}; without loss of generality, we consider only the following phase function.
		\begin{equation*}
			\mathcal{H}_{t}\left(\rho_{J}\right):=t\Omega\left(E\left(\rho_{J}\right)\right)+(k_{\Delta}-n_{\Delta})\rho_{J}\quad{\rm on}\;\rho_{J}\left(\mathcal{L}_{\kappa}\right)
		\end{equation*}
		for some fixed $\kappa$. A direct calculation shows that
		\begin{equation}\label{eq:Ht's derivatives}
			\begin{cases}
				\frac{\d}{\d\rho_{J}}\mathcal{H}_{t}\left(\rho_{J}\right)=\frac{t}{2\Omega(E\left(\rho_{J}\right))}\frac{\d E}{\d\rho_{J}}+k_{\Delta}-n_{\Delta},\\
                
				\frac{\d^{2}}{\d\rho^{2}_{J}}\mathcal{H}_{t}\left(\rho_{J}\right)=-\frac{t}{4\Omega(E\left(\rho_{J}\right))^{3}}\left(\frac{\d E}{\d\rho_{J}}\right)^{2}+\frac{t}{2\Omega(E\left(\rho_{J}\right))}\frac{\d^{2}E}{\d\rho^{2}_{J}},\\
                
				\frac{\d^{3}}{\d\rho^{3}_{J}}\mathcal{H}_{t}\left(\rho_{J}\right)=\frac{3t}{8\Omega(E\left(\rho_{J}\right))^{5}}\left(\frac{\d E}{\d\rho_{J}}\right)^{3}-\frac{3t}{4\Omega\left(E\left(\rho_{J}\right)\right)^{3}}\frac{\d E}{\d\rho_{J}}\frac{\d^{2}E}{\d\rho^{2}_{J}}+\frac{t}{2\Omega\left(E\left(\rho_{J}\right)\right)}\frac{\d^{3}E}{\d\rho^{3}_{J}}.\\
			\end{cases}
		\end{equation}
		Moreover, using the definition of $E=E\left(\rho_{J}\right)$ we have
		\begin{equation}\label{eq:lambda's derivatives}
			\begin{cases}
				\frac{\d}{\d\rho_{J}}E\left(\rho_{J}\right)=\frac{1}{\rho^{\prime}_{J}},\\
				\frac{\d^{2}}{\d\rho^{2}_{J}}E\left(\rho_{J}\right)=-\frac{\rho_{J}^{\prime\prime}}{\left(\rho^{\prime}_{J}\right)^{3}},\\
				\frac{\d^{3}}{\d\rho^{3}_{J}}E\left(\rho_{J}\right)=\frac{3\left(\rho^{\prime\prime}\right)^{2}}{\left(\rho^{\prime}_{J}\right)^{5}}-\frac{\rho_{J}^{\prime\prime\prime}}{\left(\rho^{\prime}_{J}\right)^{4}}.\\
			\end{cases}
		\end{equation}
		We begin with assuming $\mathcal{L}_{\kappa}\subset\Gamma_{0}^{\left(J\right)}$. In this case, noting that 
		\begin{equation*}
			A_{0}=
			\begin{pmatrix}
				-E&-1\\
				1&0
			\end{pmatrix}
		\end{equation*}
		and by the fact that 
		\begin{equation*}
			\left|A_{J}-A_{0}\right|_{\mathcal{C}^{3}\left(\Gamma_{0}^{\left(J\right)}\right)}\leq\varepsilon_{0}^{\frac{1}{2}},
		\end{equation*}
		we deduce
		\begin{equation*}
			\left|\left({\rm tr}A_{J}\right)^{\prime}+1\right|_{\Gamma_{0}^{\left(J\right)}},\;\left|\left({\rm tr}A_{J}\right)^{\prime\prime}\right|,\;\left|\left({\rm tr}A_{J}\right)^{\prime\prime\prime}\right|\leq\varepsilon_{0}^{\frac{1}{2}}.
		\end{equation*}
		On $\Gamma_{0}^{(J)}$, by the definition of $\rho_{J}$, we have $\rho_{J}=\xi_{J}$. Applying the identity $\xi^{\prime}_{J}=-\frac{\left({\rm tr}A_{J}\right)^{\prime}}{2\sin\xi_{J}}$ on $\Gamma_{0}^{(J)}$ and \eqref{eq:Ht's derivatives}, \eqref{eq:lambda's derivatives}, we have
		\begin{equation*}
			\begin{cases}
				\frac{\d^{2}}{\d\rho^{2}_{J}}\mathcal{H}_{t}\left(\rho_{J}\right)=-\frac{t}{\Omega\left(E\left(\rho_{J}\right)\right)^{3}}\left(\frac{\sin\rho_{J}}{\left({\rm tr}A_{J}\right)^{\prime}}\right)^{2}-\frac{t}{\Omega\left(E\left(\rho_{J}\right)\right)}\left(\frac{2\left({\rm tr}A_{J}\right)^{\prime\prime}\sin^{2}\rho_{J}}{\left(\left({\rm tr}A_{J}\right)^{\prime}\right)^{3}}+\frac{\cos\rho_{J}}{\left({\rm tr}A_{J}\right)^{\prime}}\right),\\
				\frac{\d^{3}}{\d\rho^{3}_{J}}\mathcal{H}_{t}\left(\rho_{J}\right)=-\frac{3t}{\Omega\left(E\left(\rho_{J}\right)\right)^{5}}\left(\frac{\sin\rho_{J}}{\left({\rm tr}A_{J}\right)^{\prime}}\right)^{3}-\frac{3t}{\Omega\left(E\left(\rho_{J}\right)\right)^{3}}\frac{\sin\rho_{J}}{\left({\rm tr}A_{J}\right)^{\prime}}\left(\frac{2\left({\rm tr}A_{J}\right)^{\prime\prime}\sin^{2}\rho_{J}}{\left(\left({\rm tr}A_{J}\right)^{\prime}\right)^{3}}+\frac{\cos\rho_{J}}{\left({\rm tr}A_{J}\right)^{\prime}}\right)\\
				\qquad\qquad\qquad+\frac{t}{\Omega\left(E\left(\rho_{J}\right)\right)}\left(\frac{\sin\rho_{J}}{\left({\rm tr}A_{J}\right)^{\prime}}+\frac{4\left({\rm tr}A_{J}\right)^{\prime\prime\prime}\sin^{3}\rho_{J}}{\left(\left({\rm tr}A_{J}\right)^{\prime}\right)^{4}}-\frac{12\left(\left({\rm tr}A_{J}\right)^{\prime\prime}\right)^{2}\sin^{3}\rho_{J}}{\left(\left({\rm tr}A_{J}\right)^{\prime}\right)^{5}}-\frac{6\left({\rm tr}A_{J}\right)^{\prime\prime}\cos\rho_{J}\sin\rho_{J}}{\left(\left({\rm tr}A_{J}\right)^{\prime}\right)^{3}}\right).\\
			\end{cases}
		\end{equation*}
    Using these identities we have
    \begin{equation*}
        \begin{aligned}
            &\left|\frac{\d^{2}}{\d\rho^{2}_{J}}\mathcal{H}_{t}\left(\rho_{J}\right)-\left(-\frac{t}{\Omega\left(E\left(\rho_{J}\right)\right)^{3}}\sin^{2}\rho_{J}+\frac{t}{\Omega\left(E\left(\rho_{J}\right)\right)}\cos\rho_{J}\right)\right|\\
            \leq&\frac{t}{\Omega\left(E\left(\rho_{J}\right)\right)^{3}}\sin^{2}\rho_{J}\left|1-\frac{1}{\left(\left({\rm tr}A_{J}\right)^{\prime}\right)^{2}}\right|+\frac{t}{\Omega\left(E\left(\rho_{J}\right)\right)}\left|\cos\rho_{J}\right|\left|1+\frac{1}{\left({\rm tr}A_{J}\right)^{\prime}}\right|+\\
            &\quad \frac{t}{\Omega\left(E\left(\rho_{J}\right)\right)}\cdot\left|\frac{2\left({\rm tr}A_{J}\right)^{\prime\prime}\sin^{2}\rho_{J}}{\left(\left({\rm tr}A_{J}\right)^{\prime}\right)^{3}}\right|\\
            \leq&\frac{t}{C\left(m\right)}\left(12\varepsilon_{0}^{\frac{1}{2}}+2\varepsilon_{0}^{\frac{1}{2}}+16\varepsilon_{0}^{\frac{1}{2}}\right)=\frac{30t\varepsilon_{0}^{\frac{1}{2}}}{C\left(m\right)},\\
        \end{aligned}
    \end{equation*}
    where $C\left(m\right):=\min\left\{\inf\Omega\left(E\left(\rho_{J}\right)\right)^{3},\inf\Omega\left(E\left(\rho_{J}\right)\right)\right\}$ and
    \begin{equation*}
        \begin{aligned}
            &\left|\frac{\d^{3}}{\d\rho^{3}_{J}}\mathcal{H}_{t}\left(\rho_{J}\right)-\left(\frac{3t}{\Omega\left(E\left(\rho_{J}\right)\right)^{5}}\sin^{3}\rho_{J}-\frac{3t}{\Omega\left(E\left(\rho_{J}\right)\right)^{3}}\sin\rho_{J}\cos\rho_{J}-\frac{t}{\Omega\left(E\left(\rho_{J}\right)\right)}\sin\rho_{J}\right)\right|\\
            \leq&\frac{3t}{\Omega\left(E\left(\rho_{J}\right)\right)^{5}}\left|\sin^{3}\rho_{J}\right|\left|1+\frac{1}{\left(\left({\rm tr}A_{J}\right)^{\prime}\right)^{3}}\right|+\frac{3t}{\Omega\left(E\left(\rho_{J}\right)\right)^{3}}\left|\sin\rho_{J}\cos\rho_{J}\right|\left|1-\frac{1}{\left(\left({\rm tr}A_{J}\right)^{\prime}\right)^{2}}\right|+\\
            &\quad \frac{t}{\Omega\left(E\left(\rho_{J}\right)\right)}\left|\sin\rho_{J}\right|\left|1+\frac{1}{\left({\rm tr}A_{J}\right)^{\prime}}\right|+\frac{6t}{\Omega\left(E\left(\rho_{J}\right)\right)^{3}}\left|\frac{\left({\rm tr}A_{J}\right)^{\prime\prime}\sin^{3}\rho_{J}}{\left(\left({\rm tr}A_{J}\right)^{\prime}\right)^{4}}\right|+\\
            &\quad \frac{t}{\Omega\left(E\left(\rho_{J}\right)\right)}\left(\left|\frac{4\left({\rm tr}A_{J}\right)^{\prime\prime\prime}\sin^{3}\rho_{J}}{\left(\left({\rm tr}A_{J}\right)^{\prime}\right)^{4}}\right|+\left|\frac{12\left(\left({\rm tr}A_{J}\right)^{\prime\prime}\right)^{2}\sin^{3}\rho_{J}}{\left(\left({\rm tr}A_{J}\right)^{\prime}\right)^{5}}\right|+\left|\frac{6\left({\rm tr}A_{J}\right)^{\prime\prime}\cos\rho_{J}\sin\rho_{J}}{\left(\left({\rm tr}A_{J}\right)^{\prime}\right)^{3}}\right|\right)\\
            \leq&\frac{t}{C\left(m\right)}\left(168\varepsilon^{\frac{1}{2}}_{0}+36\varepsilon_{0}^{\frac{1}{2}}+2\varepsilon^{\frac{1}{2}}_{0}+96\varepsilon^{\frac{1}{2}}_{0}+64\varepsilon^{\frac{1}{2}}_{0}+384\varepsilon^{\frac{1}{2}}_{0}+48\varepsilon^{\frac{1}{2}}_{0}\right)=\frac{798t\varepsilon_{0}^{\frac{1}{2}}}{C\left(m\right)},\\
        \end{aligned}
    \end{equation*}
    where $C\left(m\right):=\min\left\{\inf\Omega\left(E\left(\rho_{J}\right)\right),\inf\Omega\left(E\left(\rho_{J}\right)\right)^{3},\inf\Omega\left(E\left(\rho_{J}\right)\right)^{5}\right\}$. Noting that if the following equation holds
    \begin{equation*}
        \frac{t}{\Omega\left(E\left(\rho_{J}\right)\right)^{3}}\sin^{2}\rho_{J}=\frac{t}{\Omega\left(E\left(\rho_{J}\right)\right)}\cos\rho_{J},
    \end{equation*}
    we deduce that
    \begin{equation*}
        \left|\frac{3t}{\Omega\left(E\left(\rho_{J}\right)\right)^{5}}\sin^{3}\rho_{J}-\frac{3t}{\Omega\left(E\left(\rho_{J}\right)\right)^{3}}\sin\rho_{J}\cos\rho_{J}-\frac{t}{\Omega\left(E\left(\rho_{J}\right)\right)}\sin\rho_{J}\right|>0.
    \end{equation*}
    This implies that for any $\rho_{J}\in\left(-\varepsilon_{J}^{\frac{1}{4}},\pi+\varepsilon_{J}^{\frac{1}{4}}\right)$, we always have
    \begin{equation*}
        \left|\frac{\d^{2}}{\d\rho^{2}_{J}}\mathcal{H}_{t}\left(\rho_{J}\right)\right|+\left|\frac{\d^{3}}{\d\rho^{3}_{J}}\mathcal{H}_{t}\left(\rho_{J}\right)\right|\geq C_{0}\left(m\right)t,
    \end{equation*}
    for some $C_{0}\left(m\right)>0$ which depends on $m$ for all sufficiently small $\varepsilon_{0}>0$. 
    
    Recall that on $\Gamma_{0}^{\left(J\right)}$, 
    \begin{equation*}
        A_{0}=
        \begin{pmatrix}
            -E&-1\\
            1&0\\ 
        \end{pmatrix},
    \end{equation*}
    hence we have ${\rm tr}\left(A_{0}\left(E\right)\right)=-E$, $\left({\rm tr}\left(A_{0}\left(E\right)\right)\right)^{\prime}=-1$, $\left({\rm tr}\left(A_{0}\left(E\right)\right)\right)^{\prime\prime}=\left({\rm tr}\left(A_{0}\left(E\right)\right)\right)^{\prime\prime\prime}=0$. Combining the fact 
    \begin{equation*}
        \left|{\rm tr}\left(A_{J}\left(E\right)\right)-{\rm tr}\left(A_{0}\left(E\right)\right)\right|_{\mathcal{C}^{3}(\Gamma_{0}^{\left(J\right)})}\leq\varepsilon_{0}^{\frac{1}{2}},
    \end{equation*}
    and $\rho_{J}=\xi_{J}=\arccos\left(\frac{1}{2}{\rm tr}A_{J}\right)$ on $\Gamma_{0}^{\left(J\right)}$, we derive the following result 
    \begin{equation}\label{eq: DKG-error estimate}
        \left|2\cos\rho_{J}+E\left(\rho_{J}\right)\right|_{\mathcal{C}^{3}(\rho_{J}(\Gamma_{0}^{\left(J\right)}))}\leq\varepsilon_{0}^{\frac{1}{2}}.
    \end{equation}

    As for the function
\begin{equation*}
    \widetilde{\Omega}\left(\rho\right)=\sqrt{2+m^{2}-2\cos\rho}\ ,
\end{equation*}
we calculate its third derivative
\begin{equation*}
    \begin{aligned}
        \left|\widetilde{\Omega}^{\prime\prime\prime}\left(\rho\right)\right|=&\left|-\frac{\sin\rho}{\widetilde{\Omega}\left(\rho\right)}
    -\frac{3\cos\rho\sin\rho}{\widetilde{\Omega}\left(\rho\right)^{3}}
    +\frac{3\sin^{3}\rho}{\widetilde{\Omega}\left(\rho\right)^{5}}\right|\\
    =&\frac{\left|\sin\rho\right|}{\widetilde{\Omega}\left(\rho\right)^{5}}
    \left|\left(m^{2}+2\right)^{2}-\left(m^{2}+2\right)\cos\rho+\cos^{2}\rho-3\right|.
    \end{aligned}
\end{equation*}
    Since $m^{2}>0$, we obtain that there exist constants $C_{1}\left(m\right),C_{2}\left(m\right)>0$ depending on $m$, such that
    \begin{equation}\label{eq: Free DKG estimate-3rd derivative}
        C_{1}\left(m\right)\left|\sin\rho\right|\leq\left|\widetilde{\Omega}^{\prime\prime\prime}\left(\rho\right)\right|\leq C_{2}\left(m\right)\left|\sin\rho\right|.
    \end{equation}
    It follows from \eqref{eq: DKG-error estimate} and \eqref{eq: Free DKG estimate-3rd derivative} that
    \begin{equation*}
        \left|\frac{1}{t}\frac{\d^{3}}{\d\rho^{3}_{J}}\mathcal{H}_{t}\left(\rho_{J}\right)-\frac{\d^{3}}{\d\rho^{3}_{J}}\widetilde{\Omega}\left(\rho_{J}\right)\right|\leq C_{3}\left(m\right)\cdot\varepsilon_{0}^{\frac{1}{2}},
    \end{equation*}
    and 
    \begin{equation}
        \frac{C_{1}\left(m\right)}{2}\left|\sin\rho_{J}\right|\leq\left|\frac{1}{t}\frac{\d^{3}}{\d\rho^{3}_{J}}\mathcal{H}_{t}\left(\rho_{J}\right)\right|_{}\leq2C_{2}\left(m\right)\left|\sin\rho_{J}\right|,
    \end{equation}
    on $\rho_{J}(\Gamma_{0}^{\left(J\right)})$.

    We divide the interval $\rho_{J}\mathcal{L}_{\kappa}$ into the following two parts
    \begin{equation*}
        \left(\rho_{J}\mathcal{L}_{\kappa}\right)_{1}:=\rho_{J}\mathcal{L}_{\kappa}\cap\left\{\left|\sin\rho_{J}\right|\geq\frac{C_{0}\left(m\right)}{4C_{2}\left(m\right)}\right\}\quad{\rm and}\quad\left(\rho_{J}\mathcal{L}_{\kappa}\right)_{2}:=\rho_{J}\mathcal{L}_{\kappa}\cap\left\{\left|\sin\rho_{J}\right|<\frac{C_{0}\left(m\right)}{4C_{2}\left(m\right)}\right\}.
    \end{equation*}
    Then $\left(\rho_{J}\mathcal{L}_{\kappa}\right)_{1}$ consists of no more than one connected interval and $\left(\rho_{J}\mathcal{L}_{\kappa}\right)_{2}$ is the union of at most two connected intervals. On $\left(\rho_{J}\mathcal{L}_{\kappa}\right)_{1}$, we have
    \begin{equation*}
        \left|\frac{\d^{3}}{\d\rho^{3}_{J}}\mathcal{H}_{t}\left(\rho_{J}\right)\right|\geq\frac{C_{1}\left(m\right)C_{0}\left(m\right)}{8C_{2}\left(m\right)}\cdot t.
    \end{equation*}
    On a connected component of $\left(\rho_{J}\mathcal{L}_{\kappa}\right)_{2}$, we have
    \begin{equation*}
        \left|\frac{\d^{2}}{\d\rho^{2}_{J}}\mathcal{H}_{t}\left(\rho_{J}\right)\right|\geq C_{0}\left(m\right)t-2C_{2}\left(m\right)\frac{C_{0}\left(m\right)}{4C_{2}\left(m\right)}t=\frac{C_{0}\left(m\right)}{2}t.
    \end{equation*}
    Using Van der Corput's Lemma (see, for instance, \cite{Stein}), we derive
    \begin{equation*}
        \begin{aligned}
            &\left|\int_{\rho_{J}\left(\mathcal{L}_{\kappa}\cap S_{2}\right)}\beta^J_{k,k_{\Delta}}\left(E\left(\rho_{J}\right)\right)\beta^J_{n,n_{\Delta}}\left(E\left(\rho_{J}\right)\right)e^{-{\rm i}t\big(\Omega\left(E\left(\rho_{J}\right)\right)+(k_{\Delta}-n_{\Delta})\rho_{J}\Big)}\d\rho_{J}\right|\\
            \leq&2\left(108+10\cdot 108\right)\Bigg(\left(5\cdot2^{3-1}-2\right)\cdot\left(\frac{C_{1}\left(m\right)C_{0}\left(m\right)}{8C_{2}\left(m\right)}\cdot t\right)^{-\frac{1}{3}}\\
            &\quad\quad\quad\quad\quad\quad\quad\quad\quad\quad\quad\quad\quad\quad\quad+2\left(5\cdot2^{2-1}-2\right)\cdot\left(\frac{C_{0}\left(m\right)}{2}\cdot t\right)^{-\frac{1}{2}}\Bigg)\\
            \leq&C_{4}\left(m\right)\JP{t}^{-\frac{1}{3}}\quad{\rm for}\;t\geq 1.
    \end{aligned}
\end{equation*}
Now we assume that $\mathcal{L}_{\kappa}\subset\Gamma^{\left(J\right)}_{j}$ for $1\leq j\leq J$. Note that in this case, we have
\begin{equation*}
    \frac{\varepsilon_{j-1}^{\frac{\sigma}{4}}}{3\,\varepsilon_{J}^{\frac{1}{20}}}\leq\frac{|({\rm tr}A_{J})'|}{2|\sin\xi_{J}|}\leq\left|\rho^{\prime}_{J}\right|=\left|\xi^{\prime}_{J}\right|\leq\frac{N_{j}^{10\eta}}{\sin\xi_{J}},\quad\left|\rho^{\prime\prime}_{J}\right|=\left|\xi^{\prime\prime}_{J}\right|\geq\frac{\varepsilon_{j}^{\frac{3\sigma}{4}}}{4\left|\sin\xi_{J}\right|^{3}}.
\end{equation*}
Set $\Omega_{-}:=\inf_{E\in\Sigma}\Omega(E)>0$ and $\Omega_{+}:=\sup_{E\in\Sigma}\Omega(E)<\infty$. Hence on $\mathcal{L}_{\kappa}$, we deduce

\begin{equation}\label{eq:Htpp-lower-j}
\begin{aligned}
    \left|\frac{\d^{2}}{\d\rho_{J}^{2}}\mathcal{H}_{t}\left(\rho_{J}\right)\right|
    &\geq \frac{t}{2\Omega_{+}}
    \left|\frac{\d^{2}E}{\d\rho_{J}^{2}}\right|
    -\frac{t}{4\Omega_{-}^{3}}\left|\frac{\d E}{\d\rho_{J}}\right|^{2}\\
    &\geq \frac{t}{8\Omega_{+}}\frac{\varepsilon_{j-1}^{\frac{3\sigma}{4}}}{N_{j-1}^{30\eta}}
    -\frac{9t}{4\Omega_{-}^{3}}\varepsilon_{J}^{\frac{1}{10}}\varepsilon_{j-1}^{-\frac{\sigma}{2}}.
\end{aligned}
\end{equation}
Since $J\geq j$, we have $\varepsilon_{J}\leq \varepsilon_{j-1}^{1+\sigma}$, hence
$\varepsilon_{J}^{\frac{1}{10}}\varepsilon_{j-1}^{-\frac{\sigma}{2}}
\leq \varepsilon_{j-1}^{\frac{1+\sigma}{10}-\frac{\sigma}{2}}$.
In particular, by taking $\varepsilon_{0}$ sufficiently small (depending only on $m,\gamma,\eta,d,r$),
the second term in \eqref{eq:Htpp-lower-j} can be absorbed by the first one, and we obtain the uniform lower bound
\begin{equation}\label{eq:Htpp-final-j}
    \left|\frac{\d^{2}}{\d\rho_{J}^{2}}\mathcal{H}_{t}\left(\rho_{J}\right)\right|
    \geq C_{5}\left(m\right)t\frac{\varepsilon_{j-1}^{\frac{3\sigma}{4}}}{N_{j-1}^{30\eta}}\geq C_{5}\left(m\right)t\varepsilon_{j}^{\frac{13\sigma}{16}}\qquad {\rm on}\;\mathcal{L}_{\kappa},
\end{equation}

Using Van der Corput's Lemma, we have
\begin{equation*}
    \begin{aligned}
        &\left|\int_{\rho_{J}\left(\mathcal{L}_{\kappa}\cap S_{2}\right)}\beta^J_{k,k_{\Delta}}\left(E\left(\rho_{J}\right)\right)\beta^J_{n,n_{\Delta}}\left(E\left(\rho_{J}\right)\right)e^{-{\rm i}t\big(\Omega\left(E\left(\rho_{J}\right)\right)+(k_{\Delta}-n_{\Delta})\rho_{J}\Big)}\d\rho_{J}\right|\\
        \leq&2\left(3\varepsilon_{j}^{3\sigma}+10\cdot 3\varepsilon_{j}^{3\sigma}\right)\left(5\cdot2^{2-1}-2\right)\cdot\left(C_{5}\left(m\right)t\varepsilon_{j}^{\frac{13\sigma}{16}}\right)^{-\frac{1}{2}}\\
        \leq&C_{6}\left(m\right)\varepsilon_{j}^{2\sigma}\JP{t}^{-\frac{1}{2}}\quad{\rm for}\;t\geq 1.
    \end{aligned}
\end{equation*}
By the following fact
\begin{equation*}
    \#\left\{\kappa:\mathcal{L}_{\kappa}\right\}\leq 2\left|\ln \varepsilon_{0}\right|^{2J^{2}d},
\end{equation*}
we obtain that
\begin{equation*}
    \left|\mathbb{I}^{J}(S_{2};k_{\Delta},n_{\Delta},\rho_{J})\right|=\left|\sum_{\kappa}\mathbb{I}^{J}(\mathcal{L}_{\kappa}\cap S_{2};k_{\Delta},n_{\Delta},\rho_{J})\right|\leq C_{7}\left(m\right)\left|\ln\varepsilon_{0}\right|^{2J^{2}d}\JP{t}^{-\frac{1}{3}}.
\end{equation*}
This completes the proof.
\end{proof}

\begin{lemma}\label{lem:small}
    For any $J\in\Z_{\geq 1}$ and $t\geq 1$, we have
    \begin{equation*}
        \left|\mathbb{I}^{J}\left(\Sigma\right)\right|\leq61\varepsilon_{J}^{\frac{3\sigma}{4}}+100\left|k_{\Delta}-n_{\Delta}\right|\varepsilon^{\frac{1}{10}}_{J}+54|\ln\varepsilon_{0}|^{2J^{2}d}\,\varepsilon_{J}^{\frac{1}{20}}+C\left(m\right)\left|\ln\varepsilon_{0}\right|^{2J^{2}d}\JP{t}^{-\frac{1}{3}}.
    \end{equation*}
\end{lemma}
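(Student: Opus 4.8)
The plan is to obtain Lemma~\ref{lem:small} by a single application of the triangle inequality, combining Lemmas~\ref{lem:small+error}, \ref{lem:S1}, and \ref{lem:S2}. All of the genuine analysis has already been carried out in those three lemmas, so what remains is essentially a bookkeeping step.

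First I would split
\[
\left|\mathbb{I}^{J}(\Sigma)\right|
\leq
\left|\mathbb{I}^{J}(\Sigma)-\sum_{j=0}^{J}\mathbb{I}^{J}(\Lambda^{(J)}_{j};k_{\Delta},n_{\Delta},\rho_{J})\right|
+\left|\sum_{j=0}^{J}\mathbb{I}^{J}(\Lambda^{(J)}_{j};k_{\Delta},n_{\Delta},\rho_{J})\right|,
\]
and control the first term immediately by Lemma~\ref{lem:small+error}, which contributes $61\varepsilon_{J}^{\frac{3\sigma}{4}}+100|k_{\Delta}-n_{\Delta}|\varepsilon_{J}^{\frac{1}{10}}$. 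For the second term, I would note that the sets $\Gamma^{(J)}_{j}$ (hence the $\Lambda^{(J)}_{j}$) are mutually disjoint by Proposition~\ref{propsana1}, and that $S_{1}$ and $S_{2}$ form a genuine disjoint partition of $\bigcup_{j=0}^{J}\Lambda^{(J)}_{j}$ directly from their definitions. Since $\rho_{J}$ is absolutely continuous on each connected component, additivity of the integral yields
\[
\sum_{j=0}^{J}\mathbb{I}^{J}(\Lambda^{(J)}_{j};k_{\Delta},n_{\Delta},\rho_{J})
=\mathbb{I}^{J}(S_{1};k_{\Delta},n_{\Delta},\rho_{J})+\mathbb{I}^{J}(S_{2};k_{\Delta},n_{\Delta},\rho_{J}),
\]
with no overlap terms.

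Then I would apply Lemma~\ref{lem:S1} to the first summand, giving $54|\ln\varepsilon_{0}|^{2J^{2}d}\varepsilon_{J}^{\frac{1}{20}}$, and Lemma~\ref{lem:S2} to the second, giving $C(m)|\ln\varepsilon_{0}|^{2J^{2}d}\JP{t}^{-\frac{1}{3}}$; the hypotheses $J\geq 1$ and $t\geq 1$ are exactly what make Lemma~\ref{lem:S2} applicable in the stated form (the latter letting us replace $t^{-1/3}$ by $\JP{t}^{-1/3}$). Summing the three contributions reproduces the claimed bound verbatim. I do not expect any real obstacle here: the only point deserving a moment's attention is to verify that $\bigcup_{j=0}^{J}\Lambda^{(J)}_{j}=S_{1}\sqcup S_{2}$ is an honest disjoint decomposition, so that the constants from Lemmas~\ref{lem:S1} and \ref{lem:S2} simply add.
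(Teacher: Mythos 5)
Your proposal is correct and is exactly the argument the paper uses (the paper simply states that Lemma~\ref{lem:small} follows directly from Lemmas~\ref{lem:small+error}, \ref{lem:S1}, and \ref{lem:S2}); your extra remark that $S_1\sqcup S_2=\bigcup_{j=0}^{J}\Lambda^{(J)}_j$ is an honest disjoint decomposition is the right point to check before adding the three bounds.
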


\begin{proof}
    This Lemma follows directly from Lemmas \ref{lem:small+error}, \ref{lem:S1} and \ref{lem:S2}.
\end{proof}

\begin{proof}[Proof of Lemma \ref{lem:disper-log}]

		Combining Lemma \ref{lem:large} and Lemma \ref{lem:small}, we deduce
		\begin{equation*}
			\begin{aligned}
				&\left|\mathbb{I}\left(\Sigma\right)\right|\\
				\leq&32\varepsilon_{J}^{\frac{3\sigma}{4}}+\min\Bigg\{\frac{18}{\left|k_{\Delta}-n_{\Delta}\right|}\left|\ln \varepsilon_{0}\right|^{2J^{2}d}+\frac{3\Theta_{1}\JP{t}}{\left|k_{\Delta}-n_{\Delta}\right|}\left(\left(1+\varepsilon_{0}^{\frac{1}{4}}\right)^{2}+\varepsilon_{J}^{6\sigma}\right)\left(1+\frac{1}{\Theta_{2}}\right),\\
				&\quad\qquad\quad\qquad\quad61\varepsilon_{J}^{\frac{3\sigma}{4}}+100\left|k_{\Delta}-n_{\Delta}\right|\varepsilon^{\frac{1}{10}}_{J}+54|\ln\varepsilon_{0}|^{2J^{2}d}\,\varepsilon_{J}^{\frac{1}{20}}+C\left(m\right)\left|\ln\varepsilon_{0}\right|^{2J^{2}d}\JP{t}^{-\frac{1}{3}}\Bigg\}\\
				\leq&C\left(m\right)\left(\varepsilon_{J}^{\frac{3\sigma}{4}}+\min\left\{\frac{\left|\ln \varepsilon_{0}\right|^{2J^{2}d}}{\left|k_{\Delta}-n_{\Delta}\right|}+\frac{\JP{t}}{\left|k_{\Delta}-n_{\Delta}\right|},\varepsilon_{J}^{\frac{3\sigma}{4}}+\varepsilon^{\frac{1}{10}}_{J}\left|k_{\Delta}-n_{\Delta}\right|+|\ln\varepsilon_{0}|^{2J^{2}d}\left(\varepsilon_{J}^{\frac{1}{20}}+\JP{t}^{-\frac{1}{3}}\right)\right\}\right)
			\end{aligned}
		\end{equation*}
		for any $J\geq 1$. Let $t\geq 1$ be a fixed time, if $\left|k_{\Delta}-n_{\Delta}\right|\geq\JP{t}^{\frac{4}{3}}$, then we have
		\begin{equation*}
			\left|\mathbb{I}\left(\Sigma\right)\right|\leq C\left(m\right)\left(\varepsilon_{J}^{\frac{3\sigma}{4}}+\frac{\left|\ln \varepsilon_{0}\right|^{2J^{2}d}}{\JP{t}^{\frac{4}{3}}}+\frac{1}{\JP{t}^{\frac{1}{3}}}\right)
		\end{equation*}
		where $J\geq 1$ will be chosen later. If $\left|k_{\Delta}-n_{\Delta}\right|\leq\JP{t}^{\frac{4}{3}}$, we choose the smallest $J$ such that
		\begin{equation*}
			\varepsilon_{J}^{\frac{3\sigma}{4}}\leq\JP{t}^{-\frac{5}{3}}.
		\end{equation*}
		Hence we have
		\begin{equation*}
			J=\lceil \frac{1}{\ln\left(1+\sigma\right)}\ln\left(\frac{20}{9\sigma}\frac{\ln\JP{t}}{\left|\ln\varepsilon_{0}\right|}\right)\rceil
		\end{equation*}
		and for sufficiently small $\varepsilon_{0}$, 
		\begin{equation*}
			\left|\ln\varepsilon_{0}\right|^{2J^{2}d}\leq\left|\ln\varepsilon_{0}\right|^{2d\left(1+\frac{1}{\ln\left(1+\sigma\right)}\ln\ln\left(2+\JP{t}\right)\right)^{2}}\leq\left|\ln\varepsilon_{0}\right|^{82000d\left(\ln\ln\left(2+\JP{t}\right)\right)^{2}}.
		\end{equation*}
		So using this $J$ we derive
		\begin{equation*}
			\left|\mathbb{I}\left(\Sigma\right)\right|\leq C\left(m\right)\left|\ln\varepsilon_{0}\right|^{82000d\left(\ln\ln\left(2+\JP{t}\right)\right)^{2}}\JP{t}^{-\frac{1}{3}}.
		\end{equation*}
		This completes the proof.

\end{proof}

\section{Applications}\label{sec:4}

Throughout this section we assume the hypotheses of Theorem~\ref{thm:disper}.
Fix $\theta\in\T^d$ and write
\[
        G:=G_\theta,\qquad
        \mathbf A:=\sqrt{G+m^2}.
\]
Since $m>0$ and $\varepsilon_0=|V|_r$ is sufficiently small, we may assume
\[
        G+m^2\ge c_m I
\]
on $\ell^2(\Z)$ for some constant $c_m>0$ independent of $\theta$. In particular,
$\mathbf A$ is a bounded, positive, self-adjoint operator on $\ell^2(\Z)$ and
$\mathbf A^{-1}$ is bounded on $\ell^2(\Z)$.

\subsection{Auxiliary boundedness of $\mathbf A^{-1}$}

We shall use the following standard consequence of the Combes--Thomas estimate, see, for example \cite{CT73}.

\begin{lemma}\label{lem:Ainv-lp}
For every $1\le p\le \infty$, the operator $\mathbf A^{-1}$ is bounded on
$\ell^p(\Z)$. More precisely,
\[
        \|\mathbf A^{-1}f\|_{\ell^p(\Z)}
        \le C_{m,p}\|f\|_{\ell^p(\Z)},
        \qquad 1\le p\le\infty,
\]
where the constant is independent of $\theta$.
\end{lemma}

\begin{proof}
Set $T:=G+m^2$. By the smallness of $\varepsilon_0$, $T\ge c_m I$.
The standard Combes--Thomas estimate for one-dimensional discrete
Schr\"odinger operators gives, uniformly in $\theta$, $\lambda\ge0$, and
$n,k\in\Z$,
\[
        \left|
        \left\langle \delta_n,(T+\lambda)^{-1}\delta_k\right\rangle
        \right|
        \le
        C_m(1+\lambda)^{-1}e^{-c_m(1+\lambda)^{-1/2}|n-k|}.
\]
Using the Balakrishnan representation \cite{Bal60}
\[
        \mathbf A^{-1}=T^{-1/2}
        =
        \frac1\pi\int_0^\infty
        \lambda^{-1/2}(T+\lambda)^{-1}\,\d\lambda,
\]
we obtain a kernel bound
\[
        \left|
        \left\langle \delta_n,\mathbf A^{-1}\delta_k\right\rangle
        \right|
        \le C_m e^{-c_m'|n-k|}.
\]
The Schur test then implies the asserted $\ell^p$ boundedness for all
$1\le p\le\infty$.
\end{proof}

\subsection{Strichartz estimates}

We use the half-wave dispersive estimate obtained in Section~\ref{sec:3}:
for every $0<\tau<1/3$,
\begin{equation}\label{eq:half-wave-dispersive-app}
        \|e^{\pm {\rm i}t\mathbf A}f\|_{\ell^\infty(\Z)}
        \le C_\tau \JP{t}^{-\tau}\|f\|_{\ell^1(\Z)},
        \qquad t\in\R.
\end{equation}
Together with the unitarity of $e^{\pm{\rm i}t\mathbf A}$ on $\ell^2(\Z)$,
the abstract Keel--Tao theorem \cite[Theorem 1.2]{KT98} gives the following half-wave Strichartz
estimates: if $(q,r)$ and $(\tilde q,\tilde r)$ are $\tau$--admissible pairs
with $q,\tilde q<\infty$, then
\begin{equation}\label{eq:half-wave-Str-hom}
        \|e^{\pm {\rm i}t\mathbf A}f\|_{L^q_t(\R;\ell^r(\Z))}
        \le C\|f\|_{\ell^2(\Z)}
\end{equation}
and
\begin{equation}\label{eq:half-wave-Str-inhom}
        \left\|
        \int_0^t e^{\pm{\rm i}(t-s)\mathbf A}F(s)\,\d s
        \right\|_{L^q_t(\R;\ell^r(\Z))}
        \le
        C\|F\|_{L^{\tilde q'}_t(\R;\ell^{\tilde r'}(\Z))}.
\end{equation}
The constants are independent of $\theta$.

\begin{proof}[Proof of Theorem~\ref{thm:Strichartz}]
The Duhamel formula for \eqref{eq: inhomogegeneous} is
\[
u(t)
=
\cos(t\mathbf A)\varphi
+\mathbf A^{-1}\sin(t\mathbf A)\psi
+
\int_0^t
\mathbf A^{-1}\sin((t-s)\mathbf A)F(s)\,\d s .
\]
Using
\[
        \cos(t\mathbf A)
        =\frac12\left(e^{{\rm i}t\mathbf A}+e^{-{\rm i}t\mathbf A}\right),
        \qquad
        \mathbf A^{-1}\sin(t\mathbf A)
        =
        \frac{1}{2{\rm i}}
        \left(e^{{\rm i}t\mathbf A}-e^{-{\rm i}t\mathbf A}\right)\mathbf A^{-1},
\]
we decompose $u$ into two homogeneous half-wave terms and two inhomogeneous
half-wave terms. By \eqref{eq:half-wave-Str-hom} and the $\ell^2$ boundedness
of $\mathbf A^{-1}$,
\[
        \|\cos(t\mathbf A)\varphi\|_{L^q_t\ell^r}
        +
        \|\mathbf A^{-1}\sin(t\mathbf A)\psi\|_{L^q_t\ell^r}
        \le
        C\bigl(\|\varphi\|_{\ell^2}+\|\psi\|_{\ell^2}\bigr).
\]
For the Duhamel term, \eqref{eq:half-wave-Str-inhom} and
Lemma~\ref{lem:Ainv-lp} with $p=\tilde r'$ give
\[
\begin{aligned}
        \left\|
        \int_0^t
        \mathbf A^{-1}\sin((t-s)\mathbf A)F(s)\,\d s
        \right\|_{L^q_t\ell^r}
        &\le
        C\|\mathbf A^{-1}F\|_{L^{\tilde q'}_t\ell^{\tilde r'}_n}  \\
        &\le
        C\|F\|_{L^{\tilde q'}_t\ell^{\tilde r'}_n}.
\end{aligned}
\]
Combining the preceding estimates proves \eqref{eq:inhom-Strichartz}.
\end{proof}

\subsection{Small-data global well-posedness}

\begin{proof}[Proof of Theorem~\ref{thm:nonlinear}]
Let
\[
        N(u):=|u|^{p-1}u,
        \qquad
        \tau:=\frac{2}{p-1}.
\]
Since $p>7$, we have $0<\tau<1/3$, and the pair
\[
        (q,r)=(p+1,p+1)
\]
is $\tau$--admissible. Set
\[
        S:=L^{p+1}_t(\R;\ell^{p+1}(\Z)),
        \qquad
        \|u\|_S:=\|u\|_{L^{p+1}_t\ell^{p+1}_n}.
\]
The nonlinear equation can be written as
\[
        \partial_{tt}u+\mathbf A^2u=\pm N(u),
\]
and its Duhamel formula is
\begin{equation}\label{eq:Duhamel-NLKG}
        u(t)
        =
        \cos(t\mathbf A)\varphi
        +
        \mathbf A^{-1}\sin(t\mathbf A)\psi
        \pm
        \int_0^t
        \mathbf A^{-1}\sin((t-s)\mathbf A)N(u(s))\,\d s .
\end{equation}

By Theorem~\ref{thm:Strichartz} with $(q,r)=(\tilde q,\tilde r)=(p+1,p+1)$,
\[
        \|\Phi(u)\|_S
        \le
        C\bigl(\|\varphi\|_{\ell^2}+\|\psi\|_{\ell^2}\bigr)
        +
        C\|N(u)\|_{L^{\frac{p+1}{p}}_t
        \ell^{\frac{p+1}{p}}_n}.
\]
Since
\[
        \|N(u)\|_{L^{\frac{p+1}{p}}_t
        \ell^{\frac{p+1}{p}}_n}
        =
        \|u\|_S^p,
\]
and
\[
        |N(a)-N(b)|
        \le
        C_p\bigl(|a|^{p-1}+|b|^{p-1}\bigr)|a-b|,
\]
we also have
\[
        \|\Phi(u)-\Phi(v)\|_S
        \le
        C\bigl(\|u\|_S^{p-1}+\|v\|_S^{p-1}\bigr)\|u-v\|_S.
\]
Hence, if
\[
        \|\varphi\|_{\ell^2}+\|\psi\|_{\ell^2}\le \varepsilon
\]
with $\varepsilon>0$ sufficiently small, $\Phi$ is a contraction on the ball
\[
        \{u\in S:\|u\|_S\le 2C\varepsilon\}.
\]
This gives a unique global solution satisfying
\begin{equation}\label{eq:S-bound}
        \|u\|_S\le C\varepsilon .
\end{equation}

The solution also satisfies the energy bound
\begin{equation}\label{eq:L2-energy-bound}
        \sup_{t\in\R}
        \left(
        \|u(t)\|_{\ell^2}
        +
        \|\partial_tu(t)\|_{\ell^2}
        \right)
        \le C\varepsilon .
\end{equation}
Indeed, in the defocusing case this follows from conservation of the standard
energy. In the focusing case, the same identity combined with the embedding
$\ell^2(\Z)\hookrightarrow\ell^{p+1}(\Z)$ and the smallness of the data
absorbs the potential nonlinear contribution.

Applying Theorem~\ref{thm:Strichartz} to \eqref{eq:Duhamel-NLKG} and using
\eqref{eq:S-bound} gives, for every $\tau$--admissible pair $(q,r)$ with
$q<\infty$,
\[
        \|u\|_{L^q_t(\R;\ell^r(\Z))}
        \le
        C\varepsilon
        +
        C\|u\|_S^p
        \le
        C\varepsilon .
\]
The endpoint pair $(q,r)=(\infty,2)$ follows from \eqref{eq:L2-energy-bound}.
Thus the asserted bounds hold for all admissible pairs.

It remains to prove the decay of fixed-time $\ell^r$ norms. Let $r>2$ and let
$(q_r,r)$ be the corresponding $\tau$--admissible pair. Then
$u\in L^{q_r}_t(\R;\ell^r)$ with $q_r<\infty$. Moreover, by
\eqref{eq:L2-energy-bound}, the function
\[
        f_r(t):=\|u(t)\|_{\ell^r}
\]
is uniformly continuous on $\R$, since
\[
        |f_r(t)-f_r(s)|
        \le
        \|u(t)-u(s)\|_{\ell^r}
        \le
        \|u(t)-u(s)\|_{\ell^2}
        \le
        C\varepsilon |t-s|.
\]
A uniformly continuous function belonging to $L^{q_r}(0,\infty)$ must converge
to zero as $t\to+\infty$. Therefore
\[
        \|u(\cdot,t)\|_{\ell^r(\Z)}\to0,
        \qquad t\to+\infty,
        \quad r>2.
\]
The proof for $t\to-\infty$ is identical.
\end{proof}

\subsection{Scattering}

We next prove Theorem~\ref{thm:scatter}. It is convenient to use the half-wave
variable
\begin{equation}\label{eq:def-w-scattering}
        w(t):=u(t)+{\rm i}\mathbf A^{-1}\partial_tu(t).
\end{equation}
Then
\[
        u(t)=\RE w(t),
        \qquad
        \partial_tu(t)=\mathbf A\,\IM w(t),
\]
and the nonlinear equation is equivalent to
\begin{equation}\label{eq:w-duhamel}
        w(t)
        =
        e^{-{\rm i}t\mathbf A}w(0)
        \pm
        {\rm i}\int_0^t
        e^{-{\rm i}(t-s)\mathbf A}\mathbf A^{-1}N(u(s))\,\d s .
\end{equation}

\begin{lemma}\label{lem:small-data-scattering}
Let $u$ be the global small-data solution obtained in
Theorem~\ref{thm:nonlinear}. Then there exists a unique
$w_+\in\ell^2(\Z)$ such that
\begin{equation}\label{eq:halfwave-scatter}
        \lim_{t\to+\infty}
        \|w(t)-e^{-{\rm i}t\mathbf A}w_+\|_{\ell^2}=0.
\end{equation}
An analogous statement holds as $t\to-\infty$.
\end{lemma}

\begin{proof}
Multiplying \eqref{eq:w-duhamel} by $e^{{\rm i}t\mathbf A}$ gives
\[
        e^{{\rm i}t\mathbf A}w(t)
        =
        w(0)
        \pm
        {\rm i}\int_0^t
        e^{{\rm i}s\mathbf A}\mathbf A^{-1}N(u(s))\,\d s .
\]
For $0<t_1<t_2$, by unitarity of $e^{{\rm i}t\mathbf A}$ on $\ell^2$ and
Lemma~\ref{lem:Ainv-lp},
\[
\begin{aligned}
        \left\|
        e^{{\rm i}t_2\mathbf A}w(t_2)
        -
        e^{{\rm i}t_1\mathbf A}w(t_1)
        \right\|_{\ell^2}
        &\le
        C
        \|N(u)\|_{L^{\frac{p+1}{p}}_t((t_1,t_2);
        \ell^{\frac{p+1}{p}}_n)}  \\
        &=
        C
        \|u\|_{L^{p+1}_t((t_1,t_2);\ell^{p+1}_n)}^p.
\end{aligned}
\]
Since $u\in S$, the right-hand side tends to zero as
$t_1,t_2\to+\infty$. Thus $e^{{\rm i}t\mathbf A}w(t)$ is Cauchy in
$\ell^2$, and hence converges to some $w_+\in\ell^2$. This proves
\eqref{eq:halfwave-scatter}. Uniqueness follows from the unitarity of the
linear group. The proof as $t\to-\infty$ is identical.
\end{proof}

\begin{proof}[Proof of Theorem~\ref{thm:scatter}]
We prove the statement as $t\to+\infty$; the negative-time case is the same.

First, Lemma~\ref{lem:small-data-scattering} gives scattering for every
sufficiently small initial datum. To obtain the wave operator, fix a small
asymptotic state $(\varphi_+,\psi_+)\in\ell^2\times\ell^2$ and set
\[
        w_+:=\varphi_+ +{\rm i}\mathbf A^{-1}\psi_+ .
\]
We seek $w_0\in\ell^2$ such that the corresponding solution satisfies
\[
        \lim_{t\to+\infty}
        \|w(t)-e^{-{\rm i}t\mathbf A}w_+\|_{\ell^2}=0.
\]
Equivalently, using \eqref{eq:w-duhamel}, $w_0$ must solve
\begin{equation}\label{eq:wave-operator-fixed-point}
        w_0
        =
        w_+
        \mp
        {\rm i}\int_0^\infty
        e^{{\rm i}s\mathbf A}\mathbf A^{-1}N(u_{w_0}(s))\,\d s ,
\end{equation}
where $u_{w_0}$ denotes the solution generated by the initial half-wave datum
$w_0$.

Define
\[
        \mathcal T(w_0)
        :=
        w_+
        \mp
        {\rm i}\int_0^\infty
        e^{{\rm i}s\mathbf A}\mathbf A^{-1}N(u_{w_0}(s))\,\d s .
\]
The integral is convergent in $\ell^2$ by the same estimate used in
Lemma~\ref{lem:small-data-scattering}. Moreover, Theorem~\ref{thm:nonlinear}
and the Lipschitz estimate
\[
        \|N(u_1)-N(u_2)\|_{L^{\frac{p+1}{p}}_t\ell^{\frac{p+1}{p}}_n}
        \le
        C\bigl(\|u_1\|_S^{p-1}+\|u_2\|_S^{p-1}\bigr)
        \|u_1-u_2\|_S
\]
imply
\[
        \|\mathcal T(w_0)-w_+\|_{\ell^2}
        \le C\|u_{w_0}\|_S^p
        \le C\|w_0\|_{\ell^2}^p
\]
and, for small $w_0,\widetilde w_0$,
\[
        \|\mathcal T(w_0)-\mathcal T(\widetilde w_0)\|_{\ell^2}
        \le
        C\delta^{p-1}\|w_0-\widetilde w_0\|_{\ell^2}.
\]
Choosing $\delta>0$ sufficiently small, $\mathcal T$ is a contraction on a
small ball in $\ell^2$. Therefore there exists a unique small $w_0$ solving
\eqref{eq:wave-operator-fixed-point}.

Finally define
\[
        \varphi:=\RE w_0,
        \qquad
        \psi:=\mathbf A\,\IM w_0 .
\]
Since $\mathbf A$ is bounded on $\ell^2$, we have
$(\varphi,\psi)\in\ell^2(\Z)\times\ell^2(\Z)$. The relation
$u=\RE w$ and $\partial_tu=\mathbf A\,\IM w$ converts the half-wave scattering
statement into
\[
        \lim_{t\to+\infty}
        \left\|
        \binom{u(t)}{\partial_tu(t)}
        -
        \binom{v_+(t)}{\partial_tv_+(t)}
        \right\|_{\ell^2\times\ell^2}
        =0,
\]
where $v_+$ is the linear solution with data
$(\varphi_+,\psi_+)$. This proves Theorem~\ref{thm:scatter}.
\end{proof}

\section*{Acknowledgement}
  We sincerely thank Professors Wen Huang, Shiping Liu and Qi Zhou for their discussions.  H.Z. is supported by the National Key R \& D Program of China 2023YFA1010200 and the
 National Natural Science Foundation of China No. 12431004. Z.W. is supported by the National Natural Science Foundation of China No. 12341102.

\subsection*{Availability of Data}  No data was used for the research described in the article.

\subsection*{Declarations of Conflict of Interest}
The authors do not have any possible conflicts of
interest.

	\bibliographystyle{amsplain}

\end{document}